\newtheorem{defi}{Definition}
\newtheorem{theorem}[defi]{Theorem}
\newtheorem{lem}[defi]{Lemma}
\newtheorem{fact}[defi]{Fact}
\newtheorem{opqn}[defi]{Open Question}
\newtheorem{prop}[defi]{Proposition}
\newtheorem{kor}[defi]{Corollary}
\def\IN{\mathbb{N}} 
\def\SigmaS{\Sigma^{\ast}} 
\def\SigmaN{\Sigma^{\IN}} 
\def\prefix{\sqsubset} 
\def\prefixeq{\sqsubseteq} 
\def\suffixeq{\sqsupseteq} 
\def\truepath{\mathcal{T}} 
\begin{document}

\allowdisplaybreaks

\title[Benign approximations and non-speedability]{Benign approximations and non-speedability\\{\scriptsize (Draft; \MakeUppercase\today)}}

\author{Rupert H\"olzl and Philip Janicki}
\address{University of the Bundeswehr Munich}
\email{r@hoelzl.fr}
\email{philip.janicki@unibw.de}

\begin{abstract}
A left-computable number $x$ is called  {\em regainingly approximable} if there is a computable increasing sequence $(x_n)_n$ of rational numbers converging to $x$ such that $x - x_n < 2^{-n}$ for infinitely many $n \in \IN$; and it is called {\em nearly computable} if there is such an~$(x_n)_n$ such that for every computable increasing function $s \colon  \IN \to \IN$ the sequence ${(x_{s(n+1)} - x_{s(n)})_n}$ converges computably to~$0$.
In this article we study the relationship between both concepts by constructing on the one hand a non-computable number that is both regainingly approximable and nearly computable, and on the other hand a left-computable number that is nearly computable but not regainingly approximable; it then easily follows that the two notions are  incomparable with non-trivial intersection. With this relationship  clarified, we then hold the keys to answering an open question of Merkle and Titov: they studied {\em speedable} numbers, that is, left-computable numbers whose approximations can be sped up in a certain sense, and asked whether, among the left-computable numbers, being Martin-Löf random is equivalent to being non-speedable. As we show that the concepts of speedable and regainingly approximable numbers are equivalent within the nearly computable numbers, our second construction provides a negative answer.
\end{abstract}

\maketitle

\section{Introduction}

Let $(x_n)_n$ be a sequence of real numbers. We say that $(x_n)_n$ is \emph{increasing} if we have $x_{n} < x_{n+1}$ for all $n \in \IN$. If we have $x_{n} \leq x_{n+1}$ for all $n \in \IN$, we say that $(x_n)_n$ is \emph{non-decreasing}. Analogously we define a \emph{decreasing} and a \emph{non-increasing} sequence. A real number is called \emph{left-computable} if there exists a computable non-decreasing (or, equivalently, increasing) sequence of rational numbers converging to it.
The left-computable numbers play an important role in computable analysis and in the theory of algorithmic randomness.

A real number $x$ is called \emph{computable} if there exists a computable sequence $(x_n)_n$ of rational numbers satisfying $\left|x - x_n\right| < 2^{-n}$ for all $n\in\IN$. It is easy to see that every computable number is left-computable, but the converse is not true. It is also easy to see that a real number~$x$ is computable if and only if there exists a computable non-decreasing sequence $(x_n)_n$ of rational numbers with $x - x_n < 2^{-n}$ for all $n \in \IN$. However, if we only require that the condition $x - x_n < 2^{-n}$ be satisfied for infinitely many $n \in \IN$, we obtain a different subset of the left-computable numbers introduced by Hertling, Hölzl, and Janicki~\cite{HHJ2023}.

\begin{defi}[Hertling, Hölzl, Janicki~\cite{HHJ2023}]
	A real number $x$ is called \emph{regainingly approximable} if there exists a computable non-decreasing sequence of rational numbers $(x_n)_n$ converging to $x$ with $x - x_n < 2^{-n}$ for infinitely many $n \in \IN$.
\end{defi}

Obviously every computable number is regainingly approximable, but the converse is not true~\cite{HHJ2023}. 
Thus, with respect to inclusion, the regainingly approximable numbers are  properly lodged between the computable and the left-computable numbers. 

For $A \subseteq \IN$, we define a real number in the interval $\left[0, 1\right]$ via $x_A := \sum_{n \in A} 2^{-(n+1)}$. Then clearly $A$ is computable if and only if $x_A$ is computable. If $A$ is only assumed to be computably enumerable, then $x_A$ is a left-computable number; the converse is not true as pointed out by Jockusch (see Soare~\cite{Soa69a}). We say that a real number $x \in \left[0,1\right]$ is \emph{strongly left-computable} if there exists a computably enumerable set $A \subseteq \IN$ with $x_A = x$.

\begin{defi}\label{def:konvergenzmodul}
	Let  $(x_n)_n$ be a convergent sequence and $x := \lim_{n\to\infty}  x_n$.
	\begin{enumerate}
		\item A function $f\colon \IN \to \IN$ is called a \emph{modulus of convergence} of $(x_n)_n$ if for all~${n \in \IN}$ and for all $m \geq f(n)$ we have $\left| x - x_m\right| < 2^{-n}$.
		\item We say that the sequence $(x_n)_n$ \emph{converges computably} to $x$ if it has a computable modulus of convergence.
	\end{enumerate}
\end{defi}

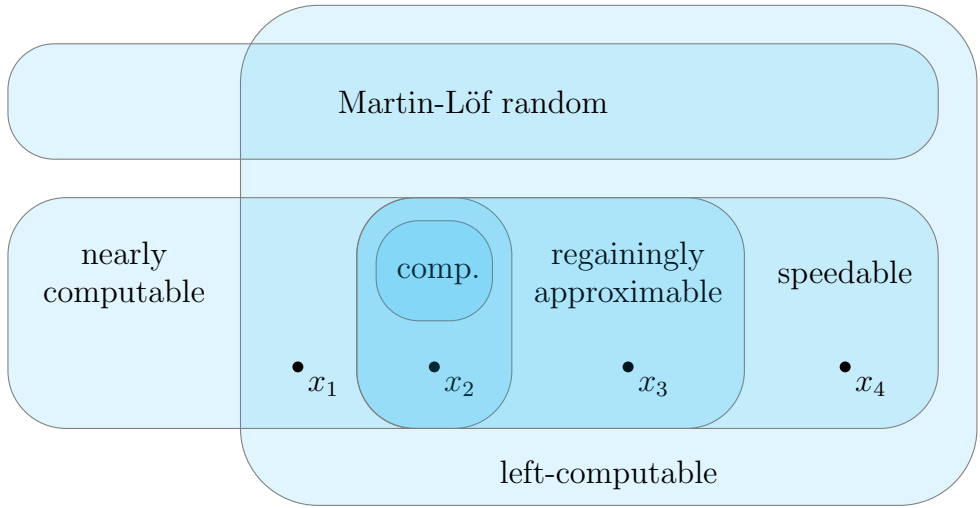
\begin{figure}\label{kldhfkjradfrttsgfrtsdsf}
	\scalebox{1.02}{
		\begin{tikzpicture}[set/.style={fill=cyan,fill opacity=0.125, draw=gray},scale=2,point/.style = {circle, fill=black, inner sep=0.05cm,
				node contents={}}]
			
			\draw[set, rounded corners=1cm] (0, 0.25) rectangle (4.75, 3.5) {};
			\node at (2.375,0.45) {left-computable};
			
			\draw[set, rounded corners=0.6cm] (-1.5, 2.5) rectangle (4.5, 3.25) {};
			\node at (1.5,2.875) {Martin-Löf random};
			
			\draw[set, rounded corners=0.75cm] (-1.5, 0.75) rectangle (1.75, 2.25) {};
			\node[align=center] at (-0.75,1.75) {nearly\\computable};
			\node at (0.37,1.15) [point, label={[label distance=-0.1cm]below right:$x_1$}];
			\draw[set, rounded corners=0.55cm] (0.875, 1.45) rectangle (1.625, 2.1) {};
			\node[align=center] at (1.25,1.75) {\phantom{.}comp.};
			\node at (1.25,1.15) [point, label={[label distance=-0.1cm]below right:$x_2$}];
			
			\draw[set, rounded corners=0.75cm] (0.75, 0.75) rectangle (3.25, 2.25) {};
			\node[align=center] at (2.5,1.75) {regainingly\\approximable};
			\node at (2.5,1.15) [point, label={[label distance=-0.1cm]below right:$x_3$}];
			\draw[set, rounded corners=0.75cm] (0.75, 0.75) rectangle (4.5, 2.25) {};
			\node[align=center] at (3.9,1.75) {speedable};
			\node at (3.9,1.15) [point, label={[label distance=-0.1cm]below right:$x_4$}];			
	\end{tikzpicture}}
	\caption{An overview of the three different notions of benign approximability studied in this article, and of numbers witnessing their separations: $x_1$ and $x_2$ are constructed in Theorem~\ref{satz:FBNAA} and Theorem~\ref{satz:fast-berechenbar-aufholend-approximierbar}, respectively. Proposition~\ref{sadjasjhkrtjhqwejhxvbbmasdgh} establishes the existence of~$x_3$, and Corollary~\ref{dfsdjhhdbfajfkjgsdjfhdasnfbsafevcvssda} that of $x_4$.}
\end{figure}
It can be shown that a real number is computable if and only if there exists a computable sequence of rational numbers converging computably to it. In a detailed study, Hertling and Janicki~\cite{HJ2023} used the concept of computable convergence to define another interesting subset of the real numbers.
\begin{defi}[Hertling, Janicki~\cite{HJ2023}]\label{def:fast-berechenbar}
	\;
	\begin{enumerate}
		\item A sequence $(x_n)_n$ is called \emph{nearly computably convergent} if it converges and, for every computable increasing function $s \colon  \IN \to \IN$, the sequence $(x_{s(n+1)} - x_{s(n)})_n$ converges computably to $0$.
		\item A real number is called \emph{nearly computable} if there exists a computable sequence of rational numbers which converges nearly computably to it.
	\end{enumerate}
\end{defi}

Naturally, every computable number is nearly computable, but the converse is not true; this follows from a theorem of Downey and LaForte~\cite[Theorem~3]{DL02} when combined with 
a result of Hertling and Janicki~\cite[Proposition~7.2]{HJ2023}, which in turn is based on previous work by Stephan and Wu~\cite[Theorem~5]{SW2005}.

In this article we are interested in the relationship between
the set of left-com\-pu\-table nearly computable numbers and that of regainingly approximable numbers; as every computable number is both nearly computable and regainingly approximable, we are only interested in non-computable numbers. The article consists of five sections. After discussing some preliminaries in the second section, we use the third section to
construct an example of a regainingly approximable number that is nearly computable but not computable. 
In the fourth section we give an example of a left-computable number that is nearly computable but not regainingly approximable. Both examples are built using infinite injury priority constructions.
Finally, in the fifth section, we discuss the relationship of our results to an  open problem stated by Merkle and Titov~\cite{MT2020}; they investigated when and to what extent  computable approximations to left-computable numbers can be accelerated.
\begin{defi}[Merkle, Titov \cite{MT2020}]
	A left-computable number $x$ is called \emph{speedable} if there exists a constant $\rho \in (0, 1) $ and a computable increasing sequence of rational numbers $(x_n)_n$ converging to $x$ such that there are infinitely many $n \in \IN$ with $\frac{x - x_{n+1}}{x - x_n} \leq \rho$.
\end{defi}
Among other results, they gave a direct proof that speedable numbers are not Martin-Löf random\footnote{This was also  implicitly shown  by Barmpalias and Lewis-Pye~\cite[Theorem~1.7]{BLP2017}. For general background on Martin-Löf random numbers, refer to Downey and Hirschfeldt~\cite{DH2010} or Nies~\cite{Nie2009}.} and asked whether the inverse implication holds as well; that is, whether a left-computable number is speedable if and only if it is not Martin-Löf random.
Previous attempts to answer this question failed because so far speedability appears to be a notion that seems hard to work with in effective constructions. But as it turns out, regaining approximability is the key to approaching the problem. Namely, we prove that the concepts of speedable and regainingly approximable numbers are equivalent within the nearly computable numbers; thus, together with the construction of the fourth section, we obtain a negative answer to the open question, our final main result.
Figure~\ref{kldhfkjradfrttsgfrtsdsf} provides an overview of the relationship of the different classes of numbers studied in this article.


\section{Preliminaries}

Let $\Sigma := \{0, 1\}$ be the binary alphabet, let $\Sigma^*$ be the set of all (finite) binary words, and let $\Sigma^\IN$ be the set of all (infinite) sequences over $\Sigma$. Write $\Gamma := \Sigma^* \cup \Sigma^\IN$. For any $\sigma \in \Sigma^*$ and $\tau \in \Gamma$, we say that $\sigma$ is a \emph{prefix} of $\tau$ (written $\sigma \prefixeq \tau$) if there exists an $\rho \in \Gamma$ with $\sigma \rho = \tau$. Obviously, the empty word $\lambda$ is a prefix of every other word. If $\sigma \prefixeq \tau$ and $\sigma \neq \tau$ hold, we write $\sigma \prefix \tau$ and say that $\sigma$ is a \emph{proper prefix} of $\tau$. Restricted to $\Sigma^*$, the prefix relation $\prefixeq$ is a partial order.

In addition, we introduce the following strict order: For any $\sigma, \tau \in \Gamma$ we say that $\sigma$ is \emph{lexicographically smaller} than $\tau$ (written $\sigma <_L \tau$) if there exists a $\rho \in \Sigma^*$ with $\rho 0 \prefixeq \sigma$ and $\rho 1 \prefixeq \tau$. Note that this relation is a strict total order only if it is restricted to $\Sigma^\IN$. We explicitly point out that according to this definition two distinct elements are incomparable if and only if one is a proper prefix of the other.

For every $\sigma \in \SigmaS$, we denote with $\left|\sigma\right|$ the length of $\sigma$. Finally, we define the function $\nu\colon \SigmaS \to \IN$, where $\nu(\sigma)$ is $\sigma$'s position in the length-lexicographic order of all strings in $\SigmaS$.

\begin{defi}\label{def:true-path}
	Let $(\truepath[t])_t$ be a sequence in $\SigmaS$. We recursively define the sequence $\truepath \in \SigmaN$, called the \emph{true path} of $(\truepath[t])_t$, as follows:
	\begin{align*}
		\mathcal{T}(0) &:= \begin{cases}
			0 &\text{if $0 \prefixeq \mathcal{T}[t]$ for infinitely many $t \in \IN$} \\
			1 &\text{otherwise}
		\end{cases} \\
		\mathcal{T}(e+1) &:= \begin{cases}
			0 &\text{if $\mathcal{T}(0) \dots \mathcal{T}(e) 0 \prefixeq \mathcal{T}[t]$ for infinitely many $t \in \IN$} \\
			1 &\text{otherwise}
		\end{cases}
	\end{align*}
\end{defi}
Note that the true path is also defined when the sequence $(\left|\truepath[t]\right|)_t$ is bounded.

\begin{fact}\label{prop:truepath-properties}
	Let $(\truepath[t])_t$ be a sequence in $\SigmaS$ and let $\truepath$ be its true path. Then the following statements hold:
	\begin{enumerate}
		\item For all $\sigma \in \SigmaS$ with $\sigma <_L \mathcal{T}$, there exist only finitely many $t\in \IN$ with $\sigma \prefixeq \mathcal{T}[t]$.
		\item If the sequence $(\left|\truepath[t]\right|)_t$ is bounded from above, then $L:= \limsup_{t\to\infty} \left|\truepath[t]\right|$ is a natural number and we have $\truepath(e) = 1$ for all $e \geq L$.
		\item If the sequence $(\left|\truepath[t]\right|)_t$ is unbounded, then for all $\sigma \prefix \truepath$ there exist infinitely many $t \in \IN$ with $\sigma \prefixeq \truepath[t]$. \qed
	\end{enumerate}
\end{fact}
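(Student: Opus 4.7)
The plan is to address each of the three items by direct analysis of the recursion in Definition~\ref{def:true-path}; items~(1) and~(2) amount to bookkeeping, while item~(3) requires an inductive argument whose technical content is concentrated in one case.

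For item~(1), I would unpack the hypothesis $\sigma <_L \truepath$ to obtain a witness $\rho \in \SigmaS$ with $\rho 0 \prefixeq \sigma$ and $\rho 1 \prefixeq \truepath$. The second prefix relation says exactly that $\truepath(0)\dots\truepath(|\rho|-1) = \rho$ and $\truepath(|\rho|) = 1$, which by Definition~\ref{def:true-path} is equivalent to $\rho 0 \prefixeq \truepath[t]$ holding for only finitely many $t \in \IN$. Since $\rho 0 \prefixeq \sigma$, any $t$ with $\sigma \prefixeq \truepath[t]$ automatically satisfies $\rho 0 \prefixeq \truepath[t]$, so the set of such $t$ is finite, as claimed.

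For item~(2), since $(|\truepath[t]|)_t$ is a bounded sequence of natural numbers, $L := \limsup_{t \to \infty} |\truepath[t]|$ is a natural number. To establish $\truepath(e) = 1$ for every $e \geq L$, I argue by contradiction: if $\truepath(e) = 0$, the recursion yields infinitely many $t$ with $\truepath(0)\dots\truepath(e-1) 0 \prefixeq \truepath[t]$, hence $|\truepath[t]| \geq e + 1 > L$ for infinitely many $t$, contradicting the value of the limsup.

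For item~(3), I proceed by induction on $|\sigma|$ for $\sigma \prefix \truepath$; the base case $\sigma = \lambda$ is immediate. For the step, write $\sigma = \tau a$ with $a = \truepath(|\tau|)$ and invoke the inductive hypothesis for $\tau$ to see that $\tau \prefixeq \truepath[t]$ for infinitely many $t$. The case $a = 0$ is direct from Definition~\ref{def:true-path}. The case $a = 1$ is the main obstacle: here the recursion only tells us that $\tau 0 \prefixeq \truepath[t]$ for finitely many $t$, so we have to exclude the scenario in which almost every approximation extending $\tau$ in fact equals $\tau$ itself. The natural strategy is to strengthen the inductive hypothesis to assert that infinitely many $t$ simultaneously satisfy $\tau \prefixeq \truepath[t]$ and $|\truepath[t]| > |\tau|$; under this stronger hypothesis, each such $t$ must satisfy $\tau 0 \prefixeq \truepath[t]$ or $\tau 1 \prefixeq \truepath[t]$, and since only finitely many are of the first kind, the second must hold infinitely often. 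Propagating this strengthened hypothesis through the induction, and using the unboundedness of $(|\truepath[t]|)_t$ (combined with item~(1) applied to the finitely many length-$|\sigma|$ strings lying strictly to the left of $\sigma$) to keep it alive at every level, is the technical heart of the argument.
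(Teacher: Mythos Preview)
Your arguments for items~(1) and~(2) are correct and essentially what one would expect; the paper itself offers no proof (the statement is marked \qed), so there is nothing further to compare against there.

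For item~(3), you have correctly located the genuine difficulty in the case $a=1$, but your proposed repair does not work, and in fact cannot work: statement~(3) is \emph{false} as written. Item~(1) only bounds how often strings \emph{to the left} of $\sigma$ occur as prefixes of $\truepath[t]$; it says nothing about strings to the right. So when you try to propagate the strengthened hypothesis (``infinitely many $t$ with $\tau \prefixeq \truepath[t]$ and $|\truepath[t]| > |\tau|$''), the combination of unboundedness and item~(1) leaves open the possibility that all sufficiently long $\truepath[t]$ lie strictly to the right of $\tau$. Concretely, consider
\[
\truepath[2k] := 0, \qquad \truepath[2k+1] := 1^{2k+1}.
\]
The lengths are unbounded, and one computes $\truepath(0)=0$ (since $0 \prefixeq \truepath[2k]$ for all $k$) and $\truepath(e)=1$ for all $e \geq 1$ (since no $\truepath[t]$ ever extends $00$). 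Hence $\truepath = 01^\IN$ and $01 \prefix \truepath$, yet $01$ is a prefix of no $\truepath[t]$ whatsoever.

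What is true, and what the paper presumably intends, is the statement with the stronger hypothesis $|\truepath[t]| \to \infty$ in place of mere unboundedness. Under that hypothesis your basic inductive scheme goes through cleanly without any strengthening: in the case $a=1$, cofinitely many of the infinitely many $t$ with $\tau \prefixeq \truepath[t]$ also satisfy $|\truepath[t]| > |\tau|$, and after discarding the finitely many with $\tau 0 \prefixeq \truepath[t]$ one is left with infinitely many having $\tau 1 \prefixeq \truepath[t]$. In the paper's actual constructions this issue never surfaces, since the needed ``applied infinitely often'' property of true-path prefixes is established directly from the construction (Propositions~\ref{sdfjlw3ljsdadfgjrfasdfdfg} and~\ref{satz:FBNAA:lem03}) rather than by appeal to Fact~\ref{prop:truepath-properties}(3).
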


Finally, we fix some standard enumeration $(\varphi_e)_e$ of all computable partial functions from $\IN$ to $\IN$. As usual, the notation $\varphi_{e}(n)[t]{\downarrow}$ denotes that the $e$-th Turing machine, which computes $\varphi_{e}$, stops after at most $t$ steps on input $n$. In this article we will use a uniform modification of this enumeration by enforcing the implication $\varphi_e(n)[t]{\downarrow} \Rightarrow \varphi_e(n) \leq t$ for all $e, n, t \in \IN$. \label{enumeration_convention} Note that this modification does not change the property of being a standard enumeration.

\begin{defi}\label{def:laengenfunktionen}
	Let 
	 $\ell\colon \IN^2 \rightarrow \IN$, defined for all $(e,t)\in \IN^2$ via
	\begin{equation*}
		\ell(e)[t] := \max(\{l \leq t \mid \forall k < l \colon \varphi_e(k)[t]{\downarrow} < \varphi_e(k+1)[t]{\downarrow}\} \cup \{-1\}),
	\end{equation*}
be the {\em length function for increasing functions}.
\end{defi}
Clearly, for all $e \in \IN$, the sequence $(\ell(e)[t])_t$ is non-decreasing, and it tends to infinity if and only if $\varphi_{e}$ is total and increasing.
\section{Non-trivial intersection}

In this section we will prove the first main result of the article. It is easy to see that every computable number is both nearly computable and regainingly approximable; we will show that the converse is not true. The following easy characterization of the computable numbers among the 
left-computable numbers will be useful in the construction.
\begin{fact}\label{lem:charakterisierung-lc-ec}
	The following statements are equivalent for a left-computable number~$x$:
	\begin{enumerate}
		\item $x$ is computable.
		\item For every computable non-decreasing sequence of rational numbers $(x_n)_n$ there exists a computable increasing function $s \colon \IN \to \IN$ with $x - x_{s(n)} < 2^{-n}$ for all $n \in \IN$.\qed
	\end{enumerate}
\end{fact}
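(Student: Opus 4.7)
The plan is to verify the two directions separately, treating the (implicit) convergence $\lim_n x_n = x$ in statement (2) as part of the assumption on~$(x_n)_n$.

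For the direction (2)$\Rightarrow$(1): Since $x$ is left-computable, I would fix any computable non-decreasing sequence $(x_n)_n$ of rationals converging to $x$. Applying the hypothesis yields a computable increasing $s\colon\IN\to\IN$ with $x - x_{s(n)} < 2^{-n}$ for all $n$. Because $x_{s(n)}\le x$ (the sequence approaches $x$ from below), this gives $\left|x-x_{s(n)}\right|<2^{-n}$, and the composed sequence $(x_{s(n)})_n$ is a computable sequence of rationals witnessing that $x$ is computable.

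For the direction (1)$\Rightarrow$(2), which carries essentially all of the content: fix a computable sequence $(y_n)_n$ of rationals with $\left|x-y_n\right|<2^{-n}$, and fix an arbitrary computable non-decreasing sequence $(x_n)_n$ converging to $x$. I would define $s$ recursively by letting $s(n)$ be the least natural number greater than $s(n-1)$ (for $n=0$, the least natural number at all) such that
\begin{equation*}
	x_{s(n)} > y_{n+1} - 2^{-(n+1)}.
\end{equation*}
The comparison on the right-hand side is a decidable comparison of computable rationals, so this search is effective. It also terminates, because $y_{n+1}-2^{-(n+1)} < x$ by the accuracy of $(y_n)_n$, and the non-decreasing sequence $(x_n)_n$ approaches $x$, so eventually $x_m > y_{n+1}-2^{-(n+1)}$. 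Hence $s$ is total, computable, and increasing. Finally, the accuracy estimate is obtained from
\begin{equation*}
	x - x_{s(n)} < x - \bigl(y_{n+1} - 2^{-(n+1)}\bigr) < 2^{-(n+1)} + 2^{-(n+1)} = 2^{-n}.
\end{equation*}

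The only subtle point, and what I would flag as the main (modest) obstacle, is choosing the rational threshold in the search correctly: it must be a strict lower bound for $x$ computable uniformly in $n$ (so that the search provably terminates), and it must sit close enough to $x$ that the resulting bound on $x - x_{s(n)}$ is at most $2^{-n}$. Using $y_{n+1}-2^{-(n+1)}$ simultaneously achieves both properties, which is why shifting the index by one and splitting the error budget as $2^{-(n+1)}+2^{-(n+1)}$ is essential. No priority argument or further machinery is required.
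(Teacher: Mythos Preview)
Your proof is correct. The paper itself does not supply a proof of this statement: it is recorded as a \emph{Fact} with a \qed and left to the reader, so there is no argument to compare against beyond noting that your write-up is exactly the standard verification one would expect. Your observation that statement~(2) only makes sense when the sequence $(x_n)_n$ is assumed to converge to~$x$ is also correct; the paper indeed only ever invokes the fact in that situation.
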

\begin{theorem}\label{satz:fast-berechenbar-aufholend-approximierbar}
	There exists a regainingly approximable number which is nearly computable, but not computable.
\end{theorem}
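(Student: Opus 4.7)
The plan is to build a computable non-decreasing sequence $(x_n)_n$ of rationals whose limit $x$ is simultaneously not computable, regainingly approximable via $(x_n)_n$, and nearly computable via $(x_n)_n$. By Fact~\ref{lem:charakterisierung-lc-ec}, non-computability amounts to ensuring that no total increasing computable $\varphi_e$ plays the role of a speed-up function for $(x_n)_n$. I would therefore impose, for every $e \in \IN$, the requirements
\begin{itemize}
\item $N_e$: if $\varphi_e$ is total and increasing, there is some $n$ with $x - x_{\varphi_e(n)} \geq 2^{-n}$;
\item $NC_e$: if $\varphi_e$ is total and increasing, then $(x_{\varphi_e(n+1)} - x_{\varphi_e(n)})_n$ converges computably to $0$;
\end{itemize}
together with a single global requirement $R^\ast$ that $x - x_n < 2^{-n}$ for infinitely many $n$, which secures regaining approximability. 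Meeting every $N_e$ yields non-computability, every $NC_e$ yields near computability, and $R^\ast$ yields the regaining property.

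I would arrange these requirements in the standard interleaved enumeration along the levels of a binary priority tree of strategies $\sigma \in \SigmaS$, using the length function $\ell(e)[t]$ of Definition~\ref{def:laengenfunktionen} at each node to guess whether its governing $\varphi_e$ is total and increasing (left outcome $0$) or not (right outcome $1$), in the style of Definition~\ref{def:true-path}. At each stage $t$ I would compute the accessible path $\truepath[t]$ and activate only the strategies on its prefixes. An $N_e$-node $\sigma$ selects a witness $n_\sigma$ much larger than $|\sigma|$ and larger than anything demanded by ancestor $NC$-budgets, waits for $\varphi_e(n_\sigma)\downarrow$, and then injects weight $2^{-n_\sigma}$ at the next stage; any initialization (when $\truepath[t]$ passes strictly to the left of $\sigma$) resets the witness to an even larger value, so that $\sigma$ contributes weight at most once along the true path. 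An $NC_e$-node $\sigma$ injects no weight itself but records a computable schedule limiting the total weight that descendants along $\sigma 0$ may contribute between $\varphi_e(n)$ and $\varphi_e(n+1)$; combined with the finite contributions of ancestors of $\sigma$ on the true path, this schedule furnishes the required computable modulus for $NC_e$ via Fact~\ref{prop:truepath-properties}.

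The global requirement $R^\ast$ is handled by the sparse-burst structure of the construction: at any stage $t$ at which no active $N$-strategy on $\truepath[t]$ is currently waiting to act and all pending witnesses exceed $t$, the total future weight is bounded by a geometric tail below $2^{-t}$, making $t$ a regaining stage; such quiet stages recur infinitely often because each $N$-strategy injects weight only once per incarnation. The main obstacle I expect is the interaction between an $N_e$-node injecting weight $2^{-n_\sigma}$ at an uncontrolled stage $\varphi_e(n_\sigma)$ and an ancestor $NC_{e'}$-node demanding computable increment control along $\varphi_{e'}$: a single injection can inflate exactly one increment $x_{\varphi_{e'}(m+1)} - x_{\varphi_{e'}(m)}$. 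This will be resolved by the priority tree, where the ancestor $NC_{e'}$-node's schedule constrains all lower-priority $N$-nodes to use witnesses so large that such inflations vanish by a computable stage, while only finitely many higher-priority $N$-nodes ever contribute along the true path. Verification then reduces to a standard true-path analysis via Fact~\ref{prop:truepath-properties}: each $N_e$ is satisfied by the $N_e$-node on $\truepath$, each $NC_e$'s schedule yields the required modulus, and infinitely many quiet stages qualify as regaining stages.
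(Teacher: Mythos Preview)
Your framework---the requirements $N_e$ and $NC_e$, the priority tree, the true-path analysis---matches the paper's. But the resolution you offer for the central conflict does not work, and the sentence ``this will be resolved by the priority tree, where the ancestor $NC_{e'}$-node's schedule constrains all lower-priority $N$-nodes to use witnesses so large\ldots'' is precisely where the argument breaks.

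Here is the concrete failure. Let $\gamma$ be an $NC_{e'}$-node on the true path and $\tau$ an $N_e$-node below $\gamma0$, also on the true path. After its last initialization $\tau$'s witness $n_\tau$ is \emph{fixed}; in your scheme $\tau$ then waits for $\varphi_e(n_\tau){\downarrow}$ and injects $2^{-n_\tau}$ in a single lump at some stage $t$. The $NC_{e'}$-modulus at any level $k>n_\tau$ must return an $M(k)$ with $\varphi_{e'}(M(k))>t$, since the $\varphi_{e'}$-increment containing stage $t$ has size at least $2^{-n_\tau}>2^{-k}$. But $t$ is the convergence time of $\varphi_e(n_\tau)$, which you cannot bound effectively; and if $\varphi_e(n_\tau){\uparrow}$ you cannot detect that either. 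Making $n_\tau$ ``large relative to the current schedule'' at the moment of initialization only protects levels $k\le n_\tau$; once the modulus is queried at $k>n_\tau$ the pending lump is fatal, and the schedule cannot retroactively enlarge a witness that is already frozen on the true path.

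The paper's mechanism, which your proposal lacks, is \emph{jump-splitting}: when $\tau$ becomes threatened and $\gamma$'s current restraint $r$ is already at least $n_\tau$, the jump is not made at once. Instead a counter of $2^{\,r-n_\tau}$ pending micro-jumps of size $2^{-r}$ is handed up to $\gamma$, and $\gamma$ releases \emph{one} micro-jump per expansionary stage, i.e.\ one per newly seen value of $\varphi_{e'}$. This synchronizes the injection with $\varphi_{e'}$'s own clock, so each $\varphi_{e'}$-increment absorbs at most $2^{-r}$ of it, and $r$ itself supplies the modulus. The splitting is iterated through several $NC$-ancestors, which is why the construction is a genuine infinite-injury argument. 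This also changes the regaining-approximability picture: with micro-jumps pending at almost every stage, your ``quiet stage'' heuristic no longer applies, and the paper instead isolates special cut-off stages at which all micro-jumps originating from a given threat have provably been flushed.
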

	We will prove this theorem by constructing a computable non-decreasing sequence of rational numbers $(x_t)_t$ which converges nearly computably. Let $x$ denote its limit. To ensure that $x$ is not computable, it is enough to ensure that there exists no computable subsequence of $(x_t)_t$ satisfying  Fact~\ref{lem:charakterisierung-lc-ec}~(2). To this end, we define  the following infinite list of negative requirements, for~$e \in \IN$:
	\begin{equation*}
		\mathcal{N}_e \colon \varphi_e \text{ total and increasing}  \Rightarrow \left(\exists m \in \IN\right)  \;  x - x_{\varphi_e(m)} \geq 2^{-m}
	\end{equation*}
	On top of that, we need to ensure that the sequence $(x_t)_t$ converges nearly computably. By definition this is the case if for every computable increasing function $s \colon  \IN \to \IN$ the sequence $(x_{s(t+1)} - x_{s(t)})_t$ converges computably to zero. To achieve this, we define the following infinite list of positive requirements, for $e \in \IN$:
	\begin{equation*}
		\mathcal{P}_e \colon  \varphi_e \text{ total and increasing}   \Rightarrow (x_{\varphi_e(t+1)} - x_{\varphi_e(t)})_t \text{ converges computably to } 0.
	\end{equation*}
	
	At first sight, the two types of requirements seem to be in conflict with each other: On the one hand, the approximation of~$x$ will have to try to satisfy negative requirements by performing possibly large jumps; and this can become necessary during very late stages of the construction. On the other hand, it will have to try to satisfy positive requirements by committing at certain moments during the construction that it will never again make jumps larger than certain sizes. We need to carefully balance out these necessities. 
	
	\smallskip
	
	Our approach is inspired by the work of Downey and LaForte~\cite{DL02}.
	Namely, we work with an infinite injury priority construction on an infinite binary tree of strategies~$\sigma \in \Sigma^*$. Such a strategy with $e := \left|\sigma\right|$ will be responsible for satisfying both requirements $\mathcal{N}_e$ and $\mathcal{P}_e$. We define the \emph{parameters} of a strategy as the following four functions from $\SigmaS \times \IN$ to $\IN$, defined for every $\sigma \in \Sigma^*$ and every~$t\in\IN$:
	\begin{itemize}
		\item A \textit{counter} $c(\sigma)[t]$
		\item A \textit{restraint} $r(\sigma)[t]$
		\item A \emph{satisfaction flag} $s(\sigma)[t]$
		\item A \textit{witness} $w(\sigma)[t]$		
	\end{itemize}
	Throughout the construction, we will ensure that the witness and the restraint are  non-decreasing in $t$. While the witness and the satisfaction flag of a strategy are responsible for handling the corresponding negative requirement, the restraint is relevant for fulfilling the positive one. The counter is a mediator between positive requirements with high priority and negative requirements with lower priority when a conflict occurs. If a strategy of low priority would like to make a large jump in order to fulfill a negative requirement, but can only make smaller jumps due to some restraint imposed by a strategy of higher priority, then with the help of the counter the large jump is divided into several smaller jumps that are then scheduled for later execution. The counter keeps track of how many of these small jumps still have to be made before the original large jump is completed. Once that is the case, the corresponding negative requirement is satisfied.
	While executing these smaller jumps one by one, even stricter restraints might get imposed due to even higher priority strategies. In that case, we need to split them again into even smaller jumps, etc.
	
	More precisely, when negative requirements become threatened, we want to react by making a jump to defeat the threat. If there is no restraint in place that prevents us from doing so, we immediately make the jump. Otherwise, if such restraints are in place, we choose the next higher priority strategy having created such a restraint, and make it responsible for executing the many small jumps needed. This will then happen starting from the next stage. In other words, jumps that cannot be executed immediately will be handed upwards in the chain of command to be handled there.
	
	For the positive requirements, ideally, we would like to not make any jumps at all. Of course this is impossible due to the negative requirements. In fact, it is during the handling of positive requirements that we execute the many small jumps that were scheduled due to the negative requirements. Depending on the situation we may either execute a jump right away or pass it upwards in the chain of command where it will get executed by being split into a large number of even smaller jumps.
	
	Finally, we still need to ensure that the constructed number is regainingly approximable. 
	In its most basic form, the idea that we employ to achieve this is that after a jump originating from some negative requirement has been completed at some stage $t \in \IN$ --- that is, all the small jumps into which it was split have ultimately been made --- we then take care that lower priority strategies can from now on only make jumps whose sum is at most $2^{-t}$. This is done by increasing their witness function values accordingly. In the proof, some significant effort will be required to verify that the technical details of the construction have all been arranged in such way that this works out correctly.
	
	\smallskip
	
	After these introductory remarks, we are now ready to formally prove the theorem.	
\begin{proof}
	We use a special pairing function $\left\langle \cdot , \cdot \right\rangle \colon \SigmaS \times \IN \to \IN$ defined by
	\begin{equation*}
		\left\langle \sigma, n \right\rangle := P(\nu(\sigma), n)
	\end{equation*}
	for all $\sigma \in \SigmaS$ and $n \in \IN$, where $P \colon \IN^2 \to \IN$ is Cantor's pairing function defined by $P(m,n) := \frac{1}{2} (m+n)(m+n+1) + n$ for all $m, n \in \IN$. 
	It is easy to see that $\left\langle \cdot , \cdot \right\rangle$ is a computable bijection.
	We will recursively define a computable non-decreasing sequence of rational numbers $(x_t)_t$ starting with $x_0 := 0$. At the same time, 	we will also recursively define four functions ${c, r, s, w \colon \SigmaS \times \IN \to \IN}$ starting with
	\begin{align*}
		c(\sigma)[0]	&:= 0, \\
		r(\sigma)[0]	&:= 0, \\
		s(\sigma)[0]	&:= 0, \\
		w(\sigma)[0]	&:= \nu(\sigma),
	\end{align*}
	for all $\sigma \in \SigmaS$. 
	
	The construction proceeds in stages. At the end of each stage $t\in \IN$, we will fix a finite binary string $\truepath[t]$; we will say that stage~$t$ {\em settles on}~$\truepath[t]$. To define this string during stage $t$ we proceed in 
	at most $t + 1$ substages, in each of which we will decide on one additional bit of $\truepath[t]$. 
	
	More precisely, in every stage $t$, we will begin in substage $0$ by {\em applying} strategy~$\lambda$; and for every substage $e< t$, if $\sigma$ with $|\sigma|=e$ is the strategy applied in it, then we may explicitly choose to {\em apply} $\sigma0$ or $\sigma1$ in the following substage $e +1$. Alternatively, in every substage, we may also decide that we do not need another substage, thereby ending stage $t$ and going to stage $t+1$ directly. At the end of each stage, we choose the last applied strategy as the string $\truepath[t]$ that stage~$t$ settles on.
	
	We define some terminology:
	\begin{itemize}
		\item At some stages $t$ of the construction, we will need to \emph{initialize} strategies~${\tau \in \SigmaS}$. By this we mean that we set 
		\begin{align*}
			c(\tau)[t+1] &:= 0, \\
			s(\tau)[t+1] &:= 0, \\
			w(\tau)[t+1] &:= \nu(\tau) + t + 2.
		\end{align*}
		Note that this influences how the construction will proceed in the {\em subsequent} stage $t+1$.
		
		\item We say that a strategy $\sigma\in \SigmaS$ is {\em threatened at stage $t$} if the  conditions 
		\begin{itemize}
			\item $s(\sigma)[t] = 0$,
			\item $\ell(e)[t] \geq w(\sigma)[t]$ where $e:=|\sigma|$,
			\item $x_{t} - x_{\varphi_{e}(\ell(e)[t])} < 2^{-w(\sigma)[t]}$
		\end{itemize}
		are satisfied. As described above, to defeat this threat, we would like to react by making some large jump.
		
		\item We say that a strategy $\sigma\in \SigmaS$ is \emph{expansionary at stage $t$} if the conditions
		\begin{itemize}
			\item $s(\sigma)[t]=1$,
			\item $\ell(e)[t]\geq 0$ where $e:=|\sigma|$,
			\item $x_{t} - x_{\varphi_{e}(\ell(e)[t])} < 2^{-r(\sigma)[t]}$
		\end{itemize}
		are satisfied. By the first condition, a $\sigma$ can only be expansionary if it was already threatened since its last initialization. Then the third condition means that $\varphi_{e}$ has made some further progress towards looking like a total, increasing function. Due to how we chose the positive requirements, this has repercussions on the sizes of jumps we may still make in the future. 
	\end{itemize}
	
	We complete the description of the construction by saying what it means for $\sigma \in \SigmaS$ to be \emph{applied} in a substage of stage~$t$:
	\begin{enumerate}
		\item Let $e := \left|\sigma\right|$. If we have $e = t$, then this is the last substage, we set
		\begin{equation*}
			x_{t+1} := x_t,
		\end{equation*}
		we initialize all strategies $\tau \in \SigmaS$ with $\sigma <_L \tau$, and terminate stage~$t$. Otherwise we continue with~(2).
		\item \emph{Negative requirement:} If $\sigma$ is not threatened at stage $t$, jump to~(3).
		
		Otherwise we would like to react to the threat by making a jump. First check if there exists a $\gamma \in \Sigma^*$ with $\gamma 0 \sqsubseteq \sigma$ and $r(\gamma)[t+1] \geq w(\sigma)[t]$. 
		
		If there is no such string, then no higher priority restraint prevents us from making the jump we want to make, and we simply set
		\begin{align*}
			x_{t+1} &:= x_{t} + 2^{-w(\sigma)[t]}.
		\end{align*}
		
		If $\gamma$ exists on the other hand, then choose the longest such $\gamma$, and set
		\begin{align*}
			c(\gamma)[t+1] 	&:= \left\langle \sigma, 2^{r(\gamma)[t+1] - w(\sigma)[t]}\right\rangle, \\
			x_{t+1} &:= x_t.
		\end{align*}
		This means that we cannot make the desired jump right away, but that we have scheduled a large number of small jumps to be made in future stages when strategy~$\gamma$ gets applied.
		
		In either case, this will be the last substage of this stage. We set
		\begin{align*}
			s(\sigma)[t+1] &:= 1
		\end{align*}
		to document that this negative requirement has been handled,  initialize all strategies $\tau \in \SigmaS$ with $\sigma <_L \tau$ or $\sigma \prefix \tau$, and terminate stage~$t$.

		\item \emph{Positive requirement:}
		If $\sigma$ is not expansionary at stage $t$, continue with the next substage $e+1$ applying~$\sigma 1$. The ``1'' means that we currently have no reason to believe that $\varphi_e$ is a total and increasing function, and that it thus need not be considered when we are trying to ensure the near computability of~$x$.
		
		Otherwise, if $\sigma$ is expansionary at stage $t$, we check if $c(\sigma)[t] = 0$. If this is the case, then we set
		\begin{equation*}
			r(\sigma)[t+1] := r(\sigma)[t] + 1
		\end{equation*} 
		and continue with the next substage $e+1$ applying~$\sigma 0$. The ``0'' means that we consider $\varphi_e$ a viable candidate for being a total and increasing function, and that in response we have made our restraint on future jump sizes stricter.
		
		Otherwise, if $\sigma$ is expansionary at stage $t$ but $c(\sigma)[t] > 0$, then we need to delay our reaction as we still have ``homework'' to do first. Namely, we need to execute a list of jumps previously scheduled in an earlier stage.
		
		Formally, in this case, there exist a strategy $\alpha \in \SigmaS$ and a number $k \in \IN$ with $c(\sigma)[t] = \left\langle \alpha, k+1\right\rangle$. Check if there exists a $\gamma \in \Sigma^*$ with $\gamma 0 \sqsubseteq \sigma$. If there is no such string, then we set
		\begin{equation*}
			x_{t+1} := x_{t} + 2^{-r(\sigma)[t]},
		\end{equation*}
		meaning that we have successfully executed one of the scheduled jumps.
	
		If $\gamma$ exists on the other hand, choose the longest such~$\gamma$. We claim that in this case we necessarily have $r(\gamma)[t+1] \geq r(\sigma)[t]$, see Fact~\ref{restrait_monotony} for a justification. Thus we can set
		\begin{align*}
			c(\gamma)[t+1] &:= \left\langle \alpha, 2^{r(\gamma)[t+1] - r(\sigma)[t]}\right\rangle, \\
			x_{t+1} &:= x_t.
		\end{align*}
		This means that instead of executing 
		the scheduled jump right away, we hand it further up in the chain of command, splitting it even further in the process. Note that the label ``$\alpha$'' in the counter, which documents which threat originally made this jump necessary, is maintained in this process.
	
		In either case, this will be the last substage of this stage. We document the fact that one of the scheduled jumps has been taken care of (either by executing it or by delegating it upwards) by setting
		\begin{equation*}
			c(\sigma)[t+1] := \begin{cases}
				0 &\text{if $k = 0$,} \\
				\left\langle \alpha, k\right\rangle &\text{otherwise},
			\end{cases}
		\end{equation*}
		initialize all strategies $\tau \in \SigmaS$ with $\alpha <_L \tau$ or $\alpha \prefix \tau$, and terminate stage~$t$.
	\end{enumerate}
	If some of the parameters $c(\sigma)[t+1]$, $r(\sigma)[t+1]$, $s(\sigma)[t+1]$, or $w(\sigma)[t+1]$ have not been explicitly set to some new value during stage $t$, then we set them to preserve their last respective values $c(\sigma)[t]$, $r(\sigma)[t]$, $s(\sigma)[t]$, or $w(\sigma)[t]$. 
	
	\medskip
	
	We proceed with the verification. Write $J:=\{t\in\IN\colon x_{t+1}>x_t\}$ for the set of stages at which non-zero jumps are made during the construction.

	First note that in this construction, whenever a strategy $\sigma$ is applied at some stage~$t$, then at least all strategies that are lexicographically larger than~$\sigma$ will be initialized at the end of stage~$t$. Furthermore, by construction, whenever a strategy $\sigma$ is initialized, all of its extensions are initialized as well.
	The following three facts are immediate from the definitions; we omit the proofs.	
	\begin{fact}\label{asdjltzhjkqwehk}
		If $\sigma$ is applied and threatened at stage $t$, then stage~$t$  settles on~$\sigma$; similarly, if $\sigma$ is applied and expansionary at stage $t$, then stage~$t$ settles on some extension of~$\sigma$.\qed
	\end{fact}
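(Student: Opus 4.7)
The plan is a direct case analysis on which branch of the ``applied'' instructions is executed when $\sigma$ is applied at stage~$t$. The key preliminary observation is that a strategy cannot be simultaneously threatened and expansionary at stage~$t$, since the first condition requires $s(\sigma)[t] = 0$ while the second requires $s(\sigma)[t] = 1$; this lets me address the two claims independently, each by tracking which strategy ends up being the last one applied in that stage.

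For the first claim, suppose $\sigma$ is applied and threatened at stage~$t$. If $|\sigma| = t$, then step~(1) is the matching branch, which terminates the stage immediately, leaving $\sigma$ as the last applied strategy. Otherwise step~(2) is reached in its threat-handling branch, and there the construction splits further on whether a suitable $\gamma$ with $\gamma 0 \prefixeq \sigma$ and sufficiently large restraint exists. In each of these subcases the instructions explicitly set $s(\sigma)[t+1] := 1$, initialize the appropriate strategies, and terminate stage~$t$ without launching a further substage. Consequently $\truepath[t] = \sigma$.

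For the second claim, suppose $\sigma$ is applied and expansionary. Since expansionary rules out threatened, step~(2) takes no action, and the construction proceeds to step~(3) (or step~(1) fires first if $|\sigma| = t$, in which case the stage trivially settles on $\sigma$, a reflexive extension of itself). In step~(3) I split on $c(\sigma)[t]$: if $c(\sigma)[t] = 0$, the construction increments $r(\sigma)$ and continues by applying $\sigma 0$ in the next substage, so the last applied string extends $\sigma 0$ and is therefore a strict extension of $\sigma$; if $c(\sigma)[t] > 0$, the jump-execution or jump-delegation branch terminates the stage, and the last applied string is $\sigma$ itself. Either way, stage~$t$ settles on an extension of~$\sigma$, understood reflexively in the second subcase.

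No step poses a genuine obstacle: the statement is essentially a direct unpacking of the flow between substages, with the only mildly subtle point being that ``extension'' must be read reflexively in order to cover the case of an expansionary $\sigma$ with $c(\sigma)[t] > 0$.
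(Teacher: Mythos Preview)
Your proof is correct. The paper itself omits the proof entirely, stating only that the fact is ``immediate from the definitions''; your case analysis is a faithful and complete unpacking of the substage flow in the construction, including the observation that threatened and expansionary are mutually exclusive via the satisfaction flag.
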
	
		\goodbreak
	\begin{fact}\label{satz:FBNBAA:lem03}
		For all $\sigma, \tau \in \Sigma^*$ and all $t \in \IN$ we have
		\begin{itemize}
			\item \makebox[\widthof{$w(\sigma)[t]$}][r]{$r(\sigma)[t]$} $\leq t$,
			\item \makebox[\widthof{$w(\sigma)[t]$}][r]{$r(\sigma)[t]$} $\leq r(\sigma)[t+1]$,
			\item $w(\sigma)[t]$ $\leq w(\sigma)[t+1]$,
			\item $\sigma \prefixeq \tau \colon  w(\sigma)[t] \leq w(\tau)[t]$.\qed
		\end{itemize}
	\end{fact}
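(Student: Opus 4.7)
My plan is a simultaneous induction on $t$, treating all four claims in one sweep and performing a case analysis over which of the three applied-strategy branches --- the termination step (1), the negative-requirement step (2), or the positive-requirement step (3) --- produces the parameter updates at stage $t$. Claims (1) and (2) are immediate once one notices that $r(\sigma)$ is only ever rewritten in case~(3) when $\sigma$ is expansionary with $c(\sigma)[t]=0$, and then only by being incremented by exactly one; since $r(\sigma)[0]=0$ this yields both monotonicity and the bound $r(\sigma)[t] \leq t$ simultaneously.

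For claim~(3), the witness changes only through initialization, which resets $w(\sigma)$ to $\nu(\sigma) + t + 2$. I would first verify the auxiliary bound $w(\sigma)[t] \leq \nu(\sigma) + t + 1$ for all $t \geq 1$, which is a one-line induction: any previous reset occurred at some stage $s < t$ and set $w(\sigma)[s+1] = \nu(\sigma) + s + 2 \leq \nu(\sigma) + t + 1$, while if no reset has happened the value is simply $\nu(\sigma) \leq \nu(\sigma) + t + 1$. This bound then immediately yields $w(\sigma)[t] \leq w(\sigma)[t+1]$ whenever an initialization does occur at stage $t$, and the inequality is trivial otherwise.

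Claim~(4) is where the real work lies, and I expect it to be the main obstacle. First note that $\sigma \prefix \tau$ forces $\nu(\sigma) < \nu(\tau)$, because strings are ordered by length in the length-lexicographic enumeration. The inductive step then requires checking, for each controlling strategy $\rho$ (or $\alpha$ in case~(3)), that the initialization pattern respects the prefix relation. The key observation is that the predicates ``$\rho <_L (\cdot)$'' and ``$\rho \prefix (\cdot)$'' propagate upward along extensions in the target slot: from $\rho <_L \sigma \prefixeq \tau$ one obtains $\rho <_L \tau$ (the same witness $\eta$ with $\eta 0 \prefixeq \rho$, $\eta 1 \prefixeq \sigma$ continues to work since $\eta 1 \prefixeq \tau$), and from $\rho \prefix \sigma \prefix \tau$ one obtains $\rho \prefix \tau$. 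Hence whenever $\sigma$ is initialized at stage $t$ the same happens to $\tau$, and the two witnesses are reset to $\nu(\sigma) + t + 2 \leq \nu(\tau) + t + 2$ in lockstep.

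The only remaining sub-case is when $\tau$ is initialized but $\sigma$ is not, which happens precisely when the controlling strategy lies strictly between them (i.e.\ $\sigma \prefix \rho \prefix \tau$ in case~(2), or the analogous condition on $\alpha$ in case~(3)). Here $w(\sigma)$ is carried forward unchanged while $w(\tau)[t+1] = \nu(\tau) + t + 2$, and the auxiliary bound from (3) closes the gap: $w(\sigma)[t] \leq \nu(\sigma) + t + 1 < \nu(\tau) + t + 2 = w(\tau)[t+1]$. This is precisely the place where the specific choice of reset value $\nu(\sigma) + t + 2$ (rather than merely $\nu(\sigma)$) becomes indispensable, and it is the only part of the argument that is not entirely formulaic.
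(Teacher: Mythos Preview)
The paper does not prove this fact; it is stated with a \qed\ and declared ``immediate from the definitions.'' Your proposal therefore supplies what the paper omits, and it is correct in substance. The auxiliary bound $w(\sigma)[t] \leq \nu(\sigma)+t+1$ for $t\geq 1$ is exactly the right device, and together with the observation that initialization of $\sigma$ forces initialization of every extension $\tau\sqsupseteq\sigma$, it closes all cases.

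One minor inaccuracy worth flagging: your characterization ``precisely when the controlling strategy lies strictly between them (i.e.\ $\sigma \prefix \rho \prefix \tau$)'' is incomplete. In case~(1) only the predicate $\rho <_L (\cdot)$ governs initialization, and even in cases~(2) and~(3) one can have $\tau$ initialized via $\rho <_L \tau$ with $\sigma \prefix \rho$ but $\rho$ not a prefix of~$\tau$ at all (take $\sigma \prefixeq \eta$ where $\eta 0 \prefixeq \rho$ and $\eta 1 \prefixeq \tau$; also $\sigma=\rho$ is possible). However, this does not damage the argument: in \emph{every} scenario where $\tau$ is initialized and $\sigma$ is not, your auxiliary bound still gives
\[
w(\sigma)[t+1] = w(\sigma)[t] \leq \nu(\sigma) + t + 1 < \nu(\tau) + t + 2 = w(\tau)[t+1],
\]
exactly as you conclude. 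The ``precisely when'' clause is inaccurate but harmless, since the remedy you apply does not depend on it.
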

	\begin{fact}\label{fact:counters-on-expansionary-stages}
		Let $t \in \IN$ be a stage when $\sigma$ is applied. Then, for all $\gamma \in \SigmaS$ with $\gamma 0 \prefixeq \sigma$, we have $c(\gamma)[t] = 0$.\qed
	\end{fact}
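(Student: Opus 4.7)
The plan is to argue by a direct inspection of the substage structure spelled out in the construction. Since $\sigma$ is applied at stage $t$, every proper prefix $\tau \prefix \sigma$ is applied at substage $|\tau|$ of stage $t$, and must transition into the successor prefix of $\sigma$ at the next substage. In particular, for any $\gamma$ with $\gamma 0 \prefixeq \sigma$, the strategy applied at substage $|\gamma|+1$ is precisely $\gamma 0$, so I would rule out every action on $\gamma$ other than the one that continues by applying $\gamma 0$.

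Going through the four possibilities listed in the definition of ``applied'': Step~(1) terminates the stage; Step~(2) (the case in which $\gamma$ is threatened) also explicitly terminates the stage, irrespective of whether a higher-priority $\gamma'$ is found; and the three branches of Step~(3) yield either ``apply $\gamma 1$'' (not expansionary), ``apply $\gamma 0$'' (expansionary with $c(\gamma)[t] = 0$), or stage termination (expansionary with $c(\gamma)[t] > 0$). The unique branch compatible with the next substage applying $\gamma 0$ is the middle one, which forces $c(\gamma)[t] = 0$, yielding the claim.

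The only mild subtlety worth flagging is the indexing convention: the test $c(\gamma)[t] = 0$ invoked inside Step~(3) refers to the value at the \emph{start} of stage~$t$, not to some intermediate update performed by an earlier substage, because by construction every parameter update inside stage~$t$ is written into a $[t+1]$-indexed slot. Hence $c(\gamma)[t]$ is fixed throughout stage~$t$, and the argument above resolves the claim cleanly. I do not expect any genuine obstacle here beyond enumerating the transition cases; the real mileage of this fact will be in later proofs (of expansionary-stage behaviour), where it lets one assume that the counters of all $\gamma$ with $\gamma 0 \prefixeq \sigma$ have already been cleared by the time $\sigma$ is being processed.
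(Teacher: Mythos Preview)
Your argument is correct and is exactly the case analysis the paper has in mind when it declares the fact ``immediate from the definitions'' and omits the proof. The key observation---that the only branch of the construction in which a strategy~$\gamma$ hands control to~$\gamma 0$ in the next substage is the expansionary branch with $c(\gamma)[t]=0$---is precisely what is needed, and your remark about the $[t]$-versus-$[t{+}1]$ indexing correctly disposes of the only possible confusion.
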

	
	We establish the claim that was used during the construction.	
	\begin{fact}\label{restrait_monotony}
		For every stage $t$ and $\gamma0 \prefixeq \sigma$, we have $r(\gamma)[t+1] \geq r(\sigma)[t]$.
	\end{fact}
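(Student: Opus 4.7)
The plan is to prove by induction on $t$ the slightly stronger invariant that $r(\gamma)[t] \geq r(\sigma)[t]$ holds whenever $\gamma 0 \prefixeq \sigma$ in $\SigmaS$. Combined with the monotonicity of the restraint stated in Fact~\ref{satz:FBNBAA:lem03}, this immediately yields the claim $r(\gamma)[t+1] \geq r(\gamma)[t] \geq r(\sigma)[t]$. The base case $t = 0$ is trivial because all restraints are initialised to zero.

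For the inductive step, the key preliminary observation is that $r(\sigma)$ can change from stage $t$ to stage $t+1$ only in one specific situation: inside step~(3), when $\sigma$ is applied during stage $t$, is expansionary, and satisfies $c(\sigma)[t] = 0$; in that case $r(\sigma)[t+1] = r(\sigma)[t] + 1$ and the construction continues at the next substage by applying $\sigma 0$. If $r(\sigma)$ is unchanged during stage $t$, the invariant at stage $t+1$ follows at once from the inductive hypothesis and monotonicity. Otherwise $r(\sigma)[t+1] = r(\sigma)[t] + 1$.

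In that remaining case, the plan is to trace the chain of substages of stage $t$ that successively apply the prefixes of $\sigma$. For the chain to reach $\sigma$ and then to continue at substage $|\sigma| + 1$ by applying $\sigma 0$, every proper prefix $\tau$ of $\sigma$ with $\tau 0 \prefixeq \sigma$ must have been applied at substage $|\tau|$ and must have continued with $\tau 0$ at substage $|\tau| + 1$. Inspection of the three branches of step~(3) shows that the only way this can happen is if $\tau$ was expansionary with $c(\tau)[t] = 0$, which is precisely the branch that sets $r(\tau)[t+1] = r(\tau)[t] + 1$. Applying this observation with $\tau = \gamma$ and invoking the inductive hypothesis gives $r(\gamma)[t+1] = r(\gamma)[t] + 1 \geq r(\sigma)[t] + 1 = r(\sigma)[t+1]$. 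The main subtlety is exactly this linkage: one must rule out that $r(\sigma)$ could be incremented while $r(\gamma)$ is not, and that is secured by the structural feature that within a stage the only way to advance from applying $\tau$ to applying $\tau 0$ is through the unique branch of the construction that also increments $r(\tau)$.
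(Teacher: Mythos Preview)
Your proof is correct and follows essentially the same route as the paper's own argument: both establish the stronger invariant $r(\gamma)[t] \geq r(\sigma)[t]$ by observing that the only way $r(\sigma)$ can increment at a stage is if $\sigma$ is applied and continues to $\sigma0$, which forces every $\tau$ with $\tau0 \prefixeq \sigma$---in particular $\gamma$---to have passed through the same branch and hence to increment $r(\tau)$ as well. Your version is simply more explicit about the induction and the substage-by-substage tracing, whereas the paper compresses the same reasoning into a few lines.
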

\begin{proof}
	If for some $t'\leq t$ we have $r(\sigma)[t'+1]>r(\sigma)[t']$, then in particular $\sigma$ must have been applied during stage $t'$. But then $\gamma0$ was also applied at stage~$t'$ since $\gamma0 \prefixeq \sigma$. This implies~${r(\gamma)[t'+1]>r(\gamma)[t']}$ by construction. Thus, by induction, 
	\[r(\gamma)[t+1] \geq r(\sigma)[t+1] \geq r(\sigma)[t].\qedhere\]
\end{proof}

\goodbreak

Next, we establish the following two helpful lemmata.

\begin{lem}\label{satz:FBNBAA:lem01}
	Let $m, t_0 \in \IN$ and $\sigma \in \Sigma^*$ with $w(\sigma)[t_0] = m$. There exists at most one stage $t_1 \geq t_0$ with $w(\sigma)[t_1] = m$ at which $\sigma$ is applied and threatened.
\end{lem}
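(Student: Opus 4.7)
The plan is to argue by contradiction, exploiting the fact that the satisfaction flag $s(\sigma)$ is set to $1$ the moment $\sigma$ is applied while threatened, together with the fact that the witness $w(\sigma)$ can only grow and that each reset of the witness via initialization produces a value that is strictly larger than any earlier witness value.

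Suppose for contradiction that there are two distinct stages $t_0 \le t_1 < t_2$ at each of which $\sigma$ is applied and threatened, with $w(\sigma)[t_1] = w(\sigma)[t_2] = m$. First I would isolate the key clause of the construction: when $\sigma$ is applied and threatened at $t_1$, step~(2) explicitly sets $s(\sigma)[t_1+1] := 1$. Since being threatened at $t_2$ requires $s(\sigma)[t_2] = 0$, and since $s(\sigma)$ is only ever set to $0$ during an initialization of $\sigma$, there must exist a stage $t' \in [t_1+1,\, t_2-1]$ at which $\sigma$ is initialized.

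Next I would bound $m$ from above. A trivial induction on $t$, using that every initialization at stage $t$ sets the witness to $\nu(\sigma)+t+2$ while the witness is otherwise constant, yields the inequality $w(\sigma)[t] \le \nu(\sigma) + t + 1$ for every $t \in \IN$. In particular,
\[
m \;=\; w(\sigma)[t_0] \;\le\; \nu(\sigma) + t_0 + 1.
\]
On the other hand, at the initialization stage $t' \ge t_1+1 \ge t_0+1$, the witness becomes
\[
w(\sigma)[t'+1] \;=\; \nu(\sigma)+t'+2 \;\ge\; \nu(\sigma)+t_0+3 \;>\; m.
\]
By the monotonicity of the witness from Fact~\ref{satz:FBNBAA:lem03}, we then get $w(\sigma)[t_2] \ge w(\sigma)[t'+1] > m$, contradicting $w(\sigma)[t_2] = m$.

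The main thing to get right is the careful bookkeeping of which events can and cannot alter $s(\sigma)$ and $w(\sigma)$: one has to verify by inspecting all three clauses of the construction that $s(\sigma)$ is only ever reset to $0$ via initialization (so that the existence of $t'$ is forced) and that the initialization value $\nu(\sigma)+t'+2$ is genuinely strictly larger than any witness value $\sigma$ ever held at an earlier stage. Once these bookkeeping facts are pinned down, the inequality chain above closes the argument in a single line.
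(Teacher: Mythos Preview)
Your proof is correct and follows essentially the same approach as the paper: both argue that after $\sigma$ is applied and threatened at $t_1$ the satisfaction flag is set to~$1$, so a second threatened stage forces an intervening initialization, which in turn pushes the witness strictly above~$m$, contradicting monotonicity. The only cosmetic difference is that you derive the explicit bound $w(\sigma)[t]\le\nu(\sigma)+t+1$ to compare $m$ against the post-initialization value $\nu(\sigma)+t'+2$, whereas the paper simply asserts that the reinitialized witness exceeds the old one; your extra bound is a mild overkill but perfectly valid.
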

\begin{proof}
	Let $t_1 \geq t_0$ be a stage at which $\sigma$ is applied and threatened. By construction we have $s(\sigma)[t_1]=0$ and $s(\sigma)[t_1+1]=1$. Thus, from now on, $\sigma$ is never threatened again, unless it is initialized at some stage $t_2 > t_1$. But then, by construction, $w(\sigma)[t_2] > w(\sigma)[t_1] = m$. The claim then follows from the fact that $(w(\sigma)[t])_t$ is non-decreasing.
\end{proof}
\begin{lem}\label{satz:FBNBAA:lem02}
	The following statements are equivalent for any $\sigma \in \Sigma^*$:
	\begin{enumerate}
		\item The strategy $\sigma$ is applied and expansionary at infinitely many stages.
		\item Infinitely many stages settle on some extension of $\sigma0$.
	\end{enumerate}
\end{lem}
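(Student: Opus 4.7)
The plan is to prove the two directions separately. The direction $(2)\Rightarrow(1)$ follows by tracing the control flow of the construction: the only rule under which substage $|\sigma|+1$ of a stage ends up applying~$\sigma 0$ (rather than $\sigma 1$ or the stage terminating earlier) is the branch of case~(3) at $\sigma$ that is taken when $\sigma$ is expansionary with $c(\sigma)[t]=0$. Hence every stage that settles on an extension of $\sigma 0$ is in particular a stage at which $\sigma$ is applied and expansionary.

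For direction $(1)\Rightarrow(2)$, the same case distinction shows that whenever $\sigma$ is applied and expansionary at stage~$t$, either $c(\sigma)[t]=0$ (so the substage continues with $\sigma 0$ and the stage settles on an extension of $\sigma 0$) or $c(\sigma)[t]>0$ (so the stage terminates at $\sigma$ and the second coordinate of $c(\sigma)$ strictly decreases). Thus it suffices to show that the former alternative occurs at infinitely many of the expansionary applications. Suppose for contradiction that it does not; then $r(\sigma)$, which only grows in this very alternative, eventually stabilizes at some value~$R$, while $c(\sigma)$ is decremented at infinitely many later stages.

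The core of the argument is then a conservation-of-mass computation. Set $W(\tau)[t] := c(\tau)[t]\cdot 2^{-r(\tau)[t]}$. Examining all subcases---and using Fact~\ref{fact:counters-on-expansionary-stages} to note that the receiving counter is always zero when freshly assigned---one verifies that the only actions affecting the total mass in the system are: case~(2) with a~$\gamma$, which injects exactly $2^{-w(\sigma')[t]}$ units of fresh mass; case~(3) with a $\gamma$, which delegates mass to a strict prefix $\gamma\prefix\tau$ without net change; and case~(3) without a $\gamma$, which removes $2^{-r(\tau)[t]}$ units by executing a jump. Using Lemma~\ref{satz:FBNBAA:lem01} together with the fact that, across its successive initializations, $w(\sigma')$ takes strictly increasing values bounded below by $\nu(\sigma')$, the total mass ever injected throughout the construction is at most
\[
    \sum_{\sigma'\in\SigmaS}\sum_{m \geq \nu(\sigma')} 2^{-m} \;\leq\; 4.
\]
Since delegation strictly shortens the host strategy, each injected unit of mass visits $c(\sigma)$ at most once, so the total mass ever entering $c(\sigma)$ is finite. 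Once $r(\sigma)$ has stabilized at~$R$, each decrement of $c(\sigma)$ removes at least $2^{-R}$ units from this finite supply, bounding the number of decrements and yielding the required contradiction.

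The main obstacle I anticipate is the ``each injected unit visits $c(\sigma)$ at most once'' step, which I would formalize by tracking, for each unit of injected mass, the sequence of host strategies it successively occupies: each delegation moves the unit to a strict $\prefix$-predecessor of the current host, so once a unit leaves $c(\sigma)$ it resides in some $\gamma\prefix\sigma$ and can only move further toward the root from there.
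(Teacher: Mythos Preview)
Your argument for $(2)\Rightarrow(1)$ is exactly the paper's. Your argument for $(1)\Rightarrow(2)$ is correct as well, but it is considerably heavier than what the paper does, and the extra machinery is not needed.

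The paper's direct route is purely local. Fix any stage $t_0$ at which $\sigma$ is applied and expansionary. If $c(\sigma)[t_0]=0$ then $\sigma0$ is applied at $t_0$ and we are done. Otherwise $c(\sigma)[t_0]=\langle\alpha,k\rangle$ for some $k\geq 1$; let $t_1,\dots,t_k$ be the next $k$ stages at which $\sigma$ is applied and expansionary. If $\sigma$ is initialized at some $\widehat t$ in the interval, then $c(\sigma)[\widehat t+1]=0$ and $\sigma0$ is applied at the first $t_i>\widehat t$. If not, then the key observation is that between consecutive such stages $c(\sigma)$ can only be decremented, never refilled: any assignment to $c(\sigma)$ must come from case~(2) or case~(3) acting at some $\tau$ with $\sigma0\prefixeq\tau$, and for such a $\tau$ to be applied, $\sigma0$ itself must be applied in the same stage, which in turn forces $c(\sigma)=0$ at that stage. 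Hence after $k$ decrements $c(\sigma)[t_k]=0$ and $\sigma0$ is applied at $t_k$. This avoids any global mass bookkeeping and any appeal to Lemma~\ref{satz:FBNBAA:lem01}.

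Your mass-conservation argument recovers the same conclusion via a global bound on the total mass ever entering $c(\sigma)$, which is a nice structural statement but overkill here; in effect you are proving a quantitative version of what the paper gets for free from the single observation ``refilling $c(\sigma)$ requires applying $\sigma0$''. One notational point: your potential $W(\tau)[t]:=c(\tau)[t]\cdot 2^{-r(\tau)[t]}$ does not type-check as written, since $c(\tau)[t]$ is a code $\langle\alpha,k\rangle$, not a number; you mean its second coordinate. With that fix the mass computation goes through as you describe (and you should also note that initialisation can only remove mass, which is harmless for your inequality).
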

\begin{proof}
	``$(1) \Rightarrow (2)$'': Suppose that there are infinitely many stages at which $\sigma$ is applied and expansionary. Let $t_0 \in \IN$ be an arbitrary  such stage. 
	We claim that there must be a stage $t^\ast \geq t_0$ where $\sigma0$ is applied. 
	If $c(\sigma)[t_0] = 0$, then $t^\ast=t_0$ by construction. 
Otherwise fix $\alpha \in \SigmaS$ and $k \geq1$ with $c(\sigma)[t_0] = \left\langle \alpha, k\right\rangle$. Then let $t_1, \dots, t_k \in \IN$ denote the $k$ consecutive stages where $\sigma$~is applied and expansionary that immediately follow $t_0$; we claim that  we can find~$t^\ast$ among them. Namely, if between $t_0$ and $t_k$ an initialization of $\sigma$ occurs at some stage $\widehat t$, then let $t^\ast$ be the smallest element of $\{t_0,\dots, t_k\}$ that is greater than~$\widehat t$. Otherwise, if no such initialization occurs, then we must have  $c(\sigma)[t_k] = 0$ by construction, and $t^\ast = t_k$.

\smallskip
	
	``$(2) \Rightarrow (1)$'': Suppose that infinitely many stages settle on some extension of $\sigma0$. By construction $\sigma$ must be applied and expansionary at these stages.
\end{proof}

The following proposition shows that for every $l \in \IN$ there exists a strategy of length~$l$ which is  applied infinitely often but only initialized finitely often. Intuitively speaking, this ensures that for each requirement there is a stable strategy which is applied in infinitely many stages in which it can take care of satisfying the negative and positive requirements. 
\begin{prop}\label{sdfjlw3ljsdadfgjrfasdfdfg}
	For every $l \in \IN$, there exists a binary string $\sigma_l \in \Sigma^{l}$ satisfying the following conditions:
	\begin{enumerate}
		\item There exist infinitely stages in which $\sigma_l$ is applied.
		\item There exist only finitely many stages in which $\sigma_l$ is initialized.
	\end{enumerate}
In particular, the map $t \mapsto \left|\truepath[t]\right|$ is unbounded, and if we let 
$\truepath$ denote the true path of $(\truepath[t])_t$ then  conditions (1) and (2) hold for every $\sigma \prefix \truepath$.
\end{prop}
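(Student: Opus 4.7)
I would argue by induction on $l$. For the base case $l=0$, take $\sigma_0:=\lambda$; it is applied at substage~$0$ of every stage, and cannot be initialized since none of the three initialization clauses admits a $\sigma'$ with $\sigma'<_L\lambda$ or $\sigma'\prefix\lambda$.

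For the inductive step, write $\sigma:=\sigma_{l-1}$. A key observation is that every initialization rule fires on a set of strategies closed under $\prefixeq$-extension and under $<_L$-extension: whenever $\tau\prefix\sigma$ is initialized, so is $\sigma$. Combined with~(2), this gives that every $\tau\prefixeq\sigma$ is initialized only finitely often, and via the substage-chain structure each such $\tau$ also inherits~(1). By Lemma~\ref{satz:FBNBAA:lem01} each such $\tau$ is then threatened only finitely often. Fix $T>l$ past which no $\tau\prefixeq\sigma$ is initialized or threatened. At each application of $\sigma$ at a stage $t\ge T$, substage case~(3) produces one of: (a)~not expansionary, extending to $\sigma 1$; (b)~expansionary with $c(\sigma)[t]=0$, extending to $\sigma 0$; or (c)~expansionary with $c(\sigma)[t]>0$, settling on~$\sigma$. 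If $\sigma$ is expansionary at infinitely many such applications, then Lemma~\ref{satz:FBNBAA:lem02} yields that infinitely many stages settle on some extension of $\sigma 0$, so $\sigma 0$ is applied infinitely often and I set $\sigma_l:=\sigma 0$. Otherwise outcome~(a) dominates eventually, so $\sigma 1$ is applied infinitely often and I set $\sigma_l:=\sigma 1$. In either case~(1) holds for~$\sigma_l$.

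To verify~(2), every initialization of $\sigma_l$ past $T$ traces to a firing on an applied strategy $\sigma'$ or counter-label $\alpha$ satisfying one of $\sigma'<_L\sigma_l$, $\sigma'\prefix\sigma_l$, $\alpha<_L\sigma_l$, $\alpha\prefix\sigma_l$. The $\prefix$-subcases involve prefixes of~$\sigma$, on which no negative requirement fires past $T$; counter-case firings with $\alpha\prefixeq\sigma$ require an original threat on~$\alpha$, of which only finitely many occurred, and each such threat starts a counter chain that only propagates to strictly shorter prefixes and so terminates in finitely many stages. The $<_L$-subcases decompose by the branching prefix $\tau\prefixeq\sigma$ with $\sigma_l(|\tau|)=1$: the triggering $\sigma'$ or $\alpha$ must extend $\tau 0$, so for it to be applied or threatened at $t\ge T$ the substage chain at $t$ must visit $\tau 0$, which requires $\tau$ to be expansionary with counter $0$ at that stage. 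But the consistency of the inductive construction ($\sigma_{l-1}|_{|\tau|+1}=\tau\cdot\sigma(|\tau|)=\tau 1$) forces $\tau$ into the ``(a) dominates'' case of its own inductive step, so $\tau 0$ is applied only finitely often past $T$. Aggregating over the finitely many relevant $\tau$ keeps the count finite, giving~(2).

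The concluding ``in particular'' is immediate: property~(1) for each $\sigma_l$ yields $\sigma_l\prefixeq\truepath[t]$ for infinitely many~$t$, so $|\truepath[t]|\ge l$ for infinitely many~$t$ for every~$l$, making $(|\truepath[t]|)_t$ unbounded; and since each $\sigma_l$ extends $\sigma_{l-1}$, the inductive definition of $\truepath$ yields $\sigma_l\prefix\truepath$, so properties~(1) and~(2) for any $\sigma\prefix\truepath$ follow by identifying $\sigma$ with the $\sigma_l$ of the same length. The main obstacle I expect is the bookkeeping in the $<_L$- and counter-chain cases of~(2): although infinitely many strategies and counter-labels lie lex-below~$\sigma_l$ in principle, the proof must still aggregate to a finite total; my plan handles this level by level along the branching prefix~$\tau$, using the stability of each $\tau\prefixeq\sigma$ past~$T$ to bound each level's contribution.
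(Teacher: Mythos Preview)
Your overall approach matches the paper's: induction on $l$, with the case split on whether $\sigma := \sigma_{l-1}$ is applied and expansionary infinitely often, invoking Lemma~\ref{satz:FBNBAA:lem02}. The paper's verification of~(2), however, is organized differently and more cleanly: rather than bounding each source of initialization separately, it identifies a single cutoff stage~$t_2$ at which $\sigma_l$ is applied (so by Fact~\ref{fact:counters-on-expansionary-stages} all counters $c(\gamma)$ with $\gamma 0 \prefixeq \sigma_l$ vanish there) and then argues that no~$\alpha$ with $\alpha <_L \sigma_l$ or $\alpha \prefixeq \sigma_l$ can be threatened after~$t_2$, so those counters never again acquire a label that would cause $\sigma_l b$ to be initialized.

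Your verification of~(2) has a gap in the $<_L$-subcase for counter labels. When a counter fires at stage~$t$ with label~$\alpha$ extending some~$\tau 0$, the strategy actually applied at~$t$ is the counter-holder~$\gamma$, not~$\alpha$; your claim that ``the substage chain at~$t$ must visit~$\tau 0$'' is therefore unjustified, since $\gamma$ may well be a proper prefix of~$\tau$. The repair is to shift attention from the firing stage~$t$ to the original threat stage~$t'$ at which~$\alpha$ was applied and threatened: at \emph{that} stage~$\tau 0$ was genuinely visited, there are only finitely many such~$t'$ (each $\tau 0$ with $\tau 1 \prefixeq \sigma_l$ is applied only finitely often by your inductive choice, and each stage admits at most one threat), and each threat spawns only a finite chain of counter firings. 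Simpler still: a counter firing with label~$\alpha <_L \sigma$ also initializes~$\sigma$ itself by the very same rule, which is forbidden past~$T$; this immediately disposes of all $<_L$-subcases except the single branching level $\tau = \sigma$, which arises only when $\sigma_l = \sigma 1$.
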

\begin{proof}
	We proceed by induction. The claim trivially holds for $l = 0$ since $\lambda$ is applied in every stage but never initialized.
	Assume that for some fixed $l \in \IN$ there exists a strategy $\sigma_l \in \Sigma^l$ satisfying both conditions. 
	
	Let $t_0 \in \IN$ be the first stage after the last initialization of $\sigma_l$; in particular, no proper prefix of $\sigma_l$ may be applied and threatened at $t_0$ or later by construction.	
	If there exists a stage $t_1 \geq t_0$ at which $\sigma_l$ is applied and threatened it must be unique; this is because after such a stage we would have ${s(\sigma_l)[t_1 + 1] = 1}$, and since $\sigma_l$ is never initialized again this means that $\sigma_l$ can never be threatened again after $t_1$. If $t_1$ exists then let $t_2 > t_1$ be the next stage at which $\sigma_l$ is applied; otherwise let $t_2$ be the next stage after $t_0$ at which $\sigma_l$ is applied. 
	We show that $\sigma_l0$ cannot be initialized anymore after $t_2$ by excluding the two possible causes:
	\begin{itemize}
		\item {\em $\sigma_l0$ is initialized  when some $\gamma$ is applied and threatened at stage ${t \geq t_2}$:} By construction, $\gamma$ would have to be such that ${\gamma \prefix \sigma_l 0}$ or such that ${\gamma <_L \sigma_l0}$. The case $\gamma=\sigma_l$ is excluded by the choice of $t_2$, and all other choices of $\gamma$ would  lead to an initialization of $\sigma_l$ after $t_2 \geq t_0$ as well, contradiction.
		
		\item {\em $\sigma_l0$ is initialized when some $\gamma$ is applied and expansionary at stage $t \geq t_2$:} 
		We cannot have $\gamma <_L \sigma_l0$ as that would again lead to an initialization of~$\sigma_l$ at stage~$t$ as well. Thus by construction we must have $\gamma 0 \prefixeq \sigma_l$. However, by Fact~\ref{fact:counters-on-expansionary-stages} we must have ${c(\gamma)[t_2] = 0}$ for every such~$\gamma$. As no $\alpha$ with $\alpha <_L \sigma_l$ or $\alpha \prefixeq \sigma_l$ can ever be applied and threatened again after $t_2$, we cannot have $c(\gamma)[t]=\langle \alpha, k\rangle$ for any such $\alpha$ and any $k \in \IN$. 
		Thus, by construction, $\sigma_l0$ cannot be initialized due to any strategy that is applied and expansionary after $t_2$.
	\end{itemize}
	It follows that $\sigma_l0$ is not initialized anymore after stage $t_2$. Thus, in case there are infinitely many stages at which $\sigma_l 0$ is applied, we may choose $\sigma_{l+1} := \sigma_l 0$.
	
	\smallskip
	
	For the other case, assume that there are only finitely many stages at which $\sigma_l 0$ is applied. Then  by the arguments above and by Lemma~\ref{satz:FBNBAA:lem02}
	there is a stage $t_3 \geq t_2$ after which $\sigma_l0$ is never applied again and $\sigma_l$ is never again threatened or expansionary.
	Then after~$t_3$, in each of the (by assumption) infinitely many stages where $\sigma_l$ is applied, $\sigma_l 1$ is applied next. Let $t_4 > t_3$ be the next stage at which $\sigma_l$ is applied; we again exclude the possible causes for initializing $\sigma_l1$ after $t_4$:
	\begin{itemize}
		\item {\em $\sigma_l1$ is initialized  when some $\gamma$ is applied and threatened at stage ${t \geq t_4}$:} By construction, $\gamma$ would have to be such that $\gamma \prefix \sigma_l 1$ or such that $\gamma <_L \sigma_l1$. The case $\gamma=\sigma_l$ is excluded by the choice of $t_2$, the case $\gamma \suffixeq\sigma_l0$ is excluded by the choice of $t_3$, and all other choices of $\gamma$ would  lead to an initialization of $\sigma_l$ after $t_4 \geq t_0$, contradiction.
		
		\item {\em $\sigma_l1$ is initialized when some $\gamma$ is applied and expansionary at stage $t \geq t_4$:} 
		As before we only need to consider $\gamma$ with $\gamma 0 \prefixeq \sigma_l$ and we must have $c(\gamma)[t_4]=0$ for all of them. 
		As no $\alpha$ with $\alpha <_L \sigma_l$ or $\alpha \prefixeq \sigma_l$ or $\sigma_l 0 \prefixeq \alpha$   can ever be applied and threatened again after $t_4$ by the previous arguments,
		we cannot have $c(\gamma)[t]=\langle \alpha, k\rangle$ for any such $\alpha$ and any $k \in \IN$. 
		Thus, by construction, $\sigma_l1$ cannot be initialized due to any strategy that is applied and expansionary after $t_4$.
	\end{itemize}
	Thus, in this case, $\sigma_l1$ is not initialized again after $t_4$, and we choose $\sigma_{l+1} := \sigma_l 1$.

\smallskip

It  trivially follows that $t \mapsto \left|\truepath[t]\right|$ is unbounded. To establish the final claim, for every $l \in \IN$, let $\sigma_l$ be the lexicographically smallest string in $\Sigma^{l}$ satisfying~(1) and~(2). We claim $\sigma_l \prefix \truepath$ for all $l \in \IN$. Suppose otherwise; then, by definition of~$\truepath$, for some $l\in \IN$ there exists a $\tau \in \Sigma^{l}$ with $\tau <_L \sigma$ and satisfying~(1). But then, by construction, every stage in which $\tau$ is applied ends  with an initialization of $\sigma$, which contradicts~(2).
\end{proof}

During the construction, both positive and negative requirements may trigger jumps. However, intuitively speaking, all jumps can be traced back to original threats to negative requirements. This intuition is captured in the following definition and facts, which are easy to verify from the construction.

\begin{defi}
	Define $u \colon J \to \IN$ as follows: For $t \in J$, let $\sigma \in \SigmaS$ be the strategy on which $t$ settles. 
	\begin{enumerate}
		\item If $\sigma$ is applied and threatened at $t$ and there is no $\gamma \in \SigmaS$ for which $\gamma 0 \prefixeq \sigma$ and ${w(\sigma)[t] \geq r(\gamma)[t+1]}$ hold, then let $u(t) := t$.
		\item If $\sigma$ is expansionary at $t$ with $c(\sigma)[t] > 0$ and there is no $\gamma \in \SigmaS$ with $\gamma 0 \prefixeq \sigma$, then there exists a strategy $\alpha \in \SigmaS$ and a number $k \in \IN$ with $c(\sigma)[t] = \left\langle \alpha, k+1\right\rangle$. Then let $u(t)$ be the latest stage $t' < t$ at which~$\alpha$ was applied and threatened.
	\end{enumerate}
\end{defi}
Informally speaking, $u$ maps the stage where a jump is made back to the stage where the threat  at its origin occured. Note that, by construction, for every $t \in J$ one of the two cases above must hold. 

\begin{fact}\label{sdlakjldfgahrjehrfhksbfasjd234}
	For each~$t'\in \IN$ there exist only finitely many $t\in J$ with $u(t)=t'$.\qed
\end{fact}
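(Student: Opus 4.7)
The plan is to fix $t' \in \IN$ and bound $|u^{-1}(t')|$ directly. First I would identify the (unique, by Fact~\ref{asdjltzhjkqwehk}) strategy~$\alpha$ applied and threatened at~$t'$; if no such~$\alpha$ exists then neither clause of the definition of~$u$ can yield $u(t) = t'$, so $u^{-1}(t') = \emptyset$. Let~$T$ be the stage of the next threat of~$\alpha$ after~$t'$, or $T := \infty$ if there is no such stage. By the definition of~$u$, any $t > t'$ with $u(t) = t'$ must satisfy $t < T$, while $t' \in u^{-1}(t')$ only when clause~(1) applies at~$t'$, contributing at most one element. I then enumerate the prefixes~$\gamma$ of~$\alpha$ with $\gamma 0 \prefixeq \alpha$ as $\gamma_0 \prec \gamma_1 \prec \dots \prec \gamma_{d-1}$ by length; if $d = 0$, no delegation is possible at~$t'$, so clause~(1) applies and $u^{-1}(t') = \{t'\}$.

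Two structural observations will drive the argument. First, by induction on the order of counter creation, any counter carrying the label~$\alpha$ must reside at some~$\gamma_k$: the only sources are a threat of~$\alpha$ (which by construction delegates to some~$\gamma_k$ in the chain) and an expansionary delegation from an already $\alpha$-labeled $\sigma = \gamma_k$, whose target is the longest~$\gamma$ with $\gamma 0 \prefixeq \gamma_k$, again in the chain. Second, for~$\alpha$ to be applied at~$t'$, every~$\gamma_k$ must satisfy $c(\gamma_k)[t'] = 0$, since a positive counter at~$\gamma_k$ would cause the construction to terminate stage~$t'$ on~$\gamma_k$ instead of descending further toward~$\alpha$. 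Consequently no $\alpha$-label from before~$t'$ survives into~$[t', T)$, and within this interval every $\alpha$-label originates from the initial delegation at~$t'$ (if any) and propagates only along the chain from~$\gamma_{j+1}$ down to~$\gamma_j$.

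If clause~(1) of~$u$ applies at~$t'$, no counter is set and $u^{-1}(t') = \{t'\}$. Otherwise clause~(2) places a finite counter $N_{j_0} := 2^{r(\gamma_{j_0})[t'+1] - w(\alpha)[t']}$ at some~$\gamma_{j_0}$. Letting~$J_j$ denote the number of stages in~$[t', T)$ at which~$\gamma_j$ is applied and expansionary with positive $\alpha$-labeled counter, I prove $J_j < \infty$ by descending induction from~$j_0$. For the base case~$j = j_0$, the first observation rules out any $\alpha$-labeled~$\sigma$ with $\gamma_{j_0} 0 \prefixeq \sigma$, so no fresh delegation can re-establish an $\alpha$-label at~$\gamma_{j_0}$ after an overwrite; the counter thus decrements by one per relevant step until it reaches zero or is replaced, giving $J_{j_0} \leq N_{j_0}$. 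For $j < j_0$, each $\alpha$-labeled episode at~$\gamma_j$ is initiated by an expansionary delegation from~$\gamma_{j+1}$, so the number of episodes is at most~$J_{j+1}$; episode~$i$ sets the counter to the finite natural $N_j^{(i)} := 2^{r(\gamma_j)[t_i+1] - r(\gamma_{j+1})[t_i]}$ (well-defined by Fact~\ref{restrait_monotony}), bounding its steps. Hence $J_j \leq \sum_{i=1}^{J_{j+1}} N_j^{(i)}$ is a finite sum of finite numbers.

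Every $\alpha$-labeled jump in~$[t', T)$ is executed at~$\gamma_0$, because for $j \geq 1$ the existence of $\gamma_{j-1} 0 \prefixeq \gamma_j$ forces the construction to delegate rather than execute, while at~$\gamma_0$ no such prefix exists (as $\gamma_0 \in \{1\}^\ast$). Therefore $|u^{-1}(t')| \leq 1 + J_0 < \infty$. The main obstacle I anticipate is the careful bookkeeping of the ``one-shot'' propagation: verifying that an $\alpha$-label destroyed by decrement or overwrite cannot be re-created from outside the natural downward chain. This verification hinges on both structural observations together with the stage-termination conventions, which ensure that $[t', T)$ begins from a clean state in which every chain strategy starts with zero counter and every later $\alpha$-label can be traced exclusively through the chain initiated at~$t'$.
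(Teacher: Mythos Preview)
Your argument is correct. The paper states this fact with a bare \qed\ and no proof, treating it as immediate from the construction; your explicit bookkeeping along the chain $\gamma_{d-1} \succ \dots \succ \gamma_0$ makes precise what the paper leaves implicit. One small wording point: in the base case you write that ``the first observation rules out any $\alpha$-labeled $\sigma$ with $\gamma_{j_0} 0 \prefixeq \sigma$,'' but the first observation alone does not exclude $\gamma_{j_0+1},\dots,\gamma_{d-1}$; what actually excludes them is the combination of the second observation (all $c(\gamma_k)[t']=0$) with your earlier remark that within $[t',T)$ labels only propagate \emph{downward} from the initial delegation at~$\gamma_{j_0}$. Since you already stated that propagation fact in the preceding paragraph, the argument is complete---just cite both observations together in the base case.
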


With the function $u$ defined, we can establish a series of lemmata bounding the total sums of jumps made during the construction.
\begin{lem}\label{satz:FBNBAA:lem:spruenge-expansionary}
	Let $t_1 \in \IN$ and $\sigma \in \SigmaS$ be such that $\sigma$ is applied, not threatened, and expansionary at~$t_1$ with $c(\sigma)[t_1] > 0$. Let $\alpha \in \SigmaS$ be the strategy and $k \in \IN$ be the number with $c(\sigma)[t_1] = \left\langle \alpha, k+1\right\rangle$. Let $t'$ be the last stage before $t_1$ 	
	at which $\alpha$ was applied and threatened. Let $t_2 > t_1$ be the next stage at which $\sigma$ is applied. Suppose that $\sigma$~is not initialized between $t_1$ and $t_2$. Let ${I := \{t \in J \mid t_1 \leq t < t_2 \wedge u(t) = t'\}}$. Then we have
	\begin{equation*}
		\sum_{t\in I} \; (x_{t+1} - x_t) = 2^{-r(\sigma)[t_1]}.
	\end{equation*}
\end{lem}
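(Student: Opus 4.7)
The proof is a mass-conservation argument. The unit of $c(\sigma)[t_1] = \left\langle \alpha, k+1\right\rangle$ that is processed at $t_1$ represents a potential jump of size $2^{-r(\sigma)[t_1]}$, which is either realized immediately as a single jump at $t_1$ itself, or fragmented and pushed up through the prefixes $\delta$ with $\delta 0 \prefixeq \sigma$, where the fragments must all be executed as jumps before $t_2$.

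I begin by splitting on the two sub-cases of step~(3) at $t_1$. In Case~A, where no $\gamma$ with $\gamma 0 \prefixeq \sigma$ exists, the construction produces exactly one jump $x_{t_1+1} - x_{t_1} = 2^{-r(\sigma)[t_1]}$, and the definition of $u$ immediately gives $u(t_1) = t'$. In Case~B, where the longest such $\gamma$ exists, no jump occurs at $t_1$; instead $c(\gamma)[t_1+1]$ receives the $\alpha$-labeled counter $\left\langle \alpha, 2^{r(\gamma)[t_1+1] - r(\sigma)[t_1]}\right\rangle$ of total weight $2^{-r(\sigma)[t_1]}$, the exponent being nonnegative by Fact~\ref{restrait_monotony}. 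To track the subsequent fate of this weight, I introduce the $\alpha$-mass function
\begin{equation*}
    M(t) := \sum_{\substack{\delta 0 \prefixeq \sigma \\ c(\delta)[t] = \left\langle \alpha, k_\delta \right\rangle,\; k_\delta > 0}} k_\delta \cdot 2^{-r(\delta)[t]};
\end{equation*}
two applications of Fact~\ref{fact:counters-on-expansionary-stages} (at $t_1$ and at $t_2$) then yield $M(t_1) = M(t_2) = 0$, and combined with the sub-case analysis above we obtain $M(t_1+1) = 2^{-r(\sigma)[t_1]}$ in Case~B and $M(t_1+1) = 0$ in Case~A.

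It will now suffice to prove the per-stage conservation law that, for every $t \in (t_1, t_2)$, $M(t) - M(t+1) = x_{t+1} - x_t$ when $t \in J$ and $u(t) = t'$, and $M(t) = M(t+1)$ otherwise. Telescoping over $(t_1, t_2)$ and adding the jump contribution at $t_1$ itself (namely $0$ in Case~B, and $2^{-r(\sigma)[t_1]}$ in Case~A) then yields the desired equality in both cases.

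Verifying this conservation law is the main obstacle and requires two delicate observations. First, the hypothesis that $\sigma$ is not initialized on $(t_1, t_2)$ implies that no $\delta 0 \prefixeq \sigma$ is initialized on $(t_1, t_2)$ either, because any step-(1), step-(2), or step-(3) event initializing such a proper prefix of $\sigma$ would by the $<_L$- and $\prefix$-closure of the initialization lists also initialize $\sigma$ itself. Second, no fresh $\alpha$-label can enter the sum $M$ from outside during $(t_1, t_2)$: the only mechanisms to install an $\alpha$-label are a step-(2) delegation with $\alpha$ itself threatened (impossible because $\sigma \prefix \alpha$ and $\sigma$ is not applied in this interval) and a step-(3) delegation from a pre-existing $\alpha$-label, which by Fact~\ref{fact:counters-on-expansionary-stages} only moves mass to a strategy still lying in $\{\delta : \delta 0 \prefixeq \sigma\}$. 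Granted these, the only per-stage changes of $M$ come from applications of some $\delta 0 \prefixeq \sigma$ that is expansionary with positive $\alpha$-labeled counter: the sub-case with no $\gamma' 0 \prefixeq \delta$ turns one unit of mass $2^{-r(\delta)[t]}$ into a jump of equal size with $u(t) = t'$, while the sub-case where such $\gamma'$ exists preserves the total by moving one unit on $\delta$ to $2^{r(\gamma')[t+1] - r(\delta)[t]}$ newly installed units on the chosen $\gamma' 0 \prefixeq \delta$.
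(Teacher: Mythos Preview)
Your proof is correct, but it takes a genuinely different route from the paper's. The paper proceeds by induction on the number of zeros in~$\sigma$: if $\sigma$ contains no~$0$, then $I = \{t_1\}$ and the single jump has size $2^{-r(\sigma)[t_1]}$; otherwise the longest $\gamma$ with $\gamma 0 \prefixeq \sigma$ receives the counter $\left\langle \alpha, 2^{r(\gamma)[t_1+1]-r(\sigma)[t_1]}\right\rangle$, one notes $r(\gamma)[t] = r(\gamma)[t_1+1]$ throughout~$I$, and the inductive hypothesis applied to each of the $2^{r(\gamma)[t_1+1]-r(\sigma)[t_1]}$ sub-intervals for~$\gamma$ (which has one fewer zero) gives the total as $2^{r(\gamma)[t_1+1]-r(\sigma)[t_1]} \cdot 2^{-r(\gamma)[t_1+1]} = 2^{-r(\sigma)[t_1]}$.

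Your potential-function argument replaces this recursion by a single global invariant. This is a legitimate alternative: the telescoping of~$M$ sidesteps the need to partition $(t_1,t_2)$ into the nested sub-intervals that the paper's induction implicitly tracks, at the cost of having to rule out every possible way $M$ could change unexpectedly (overwrites, initializations, restraint increments, non-$\alpha$ labels landing in the set). You handle these cases correctly; in particular the key observations that (i)~any initialization of a $\delta$ with $\delta 0 \prefixeq \sigma$ would also initialize~$\sigma$, and (ii)~any $\alpha$-labelled strategy applied in $(t_1,t_2)$ must lie in $\{\delta : \delta 0 \prefixeq \sigma\}$ because $\sigma \prefix \alpha$ forces longer carriers to extend~$\sigma$, are exactly what is needed. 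One small remark: your appeal to Fact~\ref{fact:counters-on-expansionary-stages} in the phrase ``which by Fact~\ref{fact:counters-on-expansionary-stages} only moves mass to a strategy still lying in $\{\delta : \delta 0 \prefixeq \sigma\}$'' is slightly imprecise --- that fact guarantees the \emph{target} of a delegation had counter~$0$ (so no $\alpha$-mass is silently overwritten), while the containment of the target in the set follows rather from $\gamma' 0 \prefixeq \sigma_0$ and the already-established containment of the source~$\sigma_0$. Both ingredients are needed, and both hold, so this is a matter of exposition rather than a gap.

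The paper's induction is shorter; your conservation argument is more explicit about why no mass can leak.
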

Note that a strategy $\sigma$ can by definition not be threatened and expansionary at the same stage, so that the condition ``not threatened'' seems superfluous. However, we state and prove the lemma in this form as this will later allow us to reuse the lemma unmodified in the proof of Theorem~\ref{satz:FBNAA}.
\begin{proof}
	We show the statement by induction over the number of zeros in $\sigma$. If $\sigma$~does not contain any zero, then $I$~contains only one element and we have
	\begin{equation*}
		\sum_{t \in I}\; (x_{t+1} - x_t) = x_{t_1 + 1} - x_{t_1} = 2^{-r(\sigma)[t_1]}.
	\end{equation*}
	Otherwise let $\gamma \in \SigmaS$ be the longest string with $\gamma 0 \prefixeq \sigma$ with 
	\[c(\gamma)[t_1+1] = \left\langle \alpha, 2^{r(\gamma)[t_1+1]-r(\sigma)[t_1]} \right\rangle.\] By construction, this is a counter indicating how many stages at which $\gamma$ is applied, not threatened and expansionary have to occur before $\sigma$ can be applied again; by construction we have $r(\gamma)[t] = r(\gamma)[t_1+1]$ for all $t \in I$. As $\gamma$ has one zero less than~$\sigma$, we can assume inductively that the statement already holds for $\gamma$; furthermore note that $\gamma$  cannot be initialized between $t_1$ and $t_2$ by assumption and construction. Thus,
	\[
	\sum_{t \in I}\; (x_{t+1} - x_t) = 2^{r(\gamma)[t_1+1]-r(\sigma)[t_1]} \cdot 2^{-r(\gamma)[t_1+1]} = 2^{-r(\sigma)[t_1]}.\qedhere
	\]
\end{proof}
\begin{fact}\label{remark:spruenge-expansionary-weak}
	Under the same assumptions as in Lemma~\ref{satz:FBNBAA:lem:spruenge-expansionary}, except allowing initializations of $\sigma$ between $t_1$ and $t_2$, we still have 
	$	\sum_{t\in I} \; (x_{t+1} - x_t) \leq 2^{-r(\sigma)[t_1]}$.
\end{fact}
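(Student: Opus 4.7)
The plan is to proceed by induction on the number of zeros of $\sigma$, mirroring the argument of Lemma~\ref{satz:FBNBAA:lem:spruenge-expansionary} but invoking this Fact itself as the induction hypothesis so that initializations of sub-delegates are absorbed.

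For the base case, when $\sigma$ has no zeros, the single jump of size $2^{-r(\sigma)[t_1]}$ performed at $t_1$ already accounts for the whole sum, and I would argue that no other stage $t\in(t_1,t_2)$ contributes to $I$. Any additional contribution would need a strategy $\tau=1^m$ that is expansionary with $c(\tau) = \langle \alpha,\cdot\rangle$; but every strategy that carries an $\alpha$-counter sits in the delegation chain rooted at $\alpha$'s threat at $t'$, and since $\sigma$ occupies the shortest end of this chain, all other members properly extend $\sigma$ and thus cannot be applied while $\sigma$ is not---as is the case throughout $(t_1,t_2)$. Since $\alpha$ itself extends $\sigma$, it cannot be re-threatened either, so no initialization of $\sigma$ can spawn a new $\alpha$-counter during the window.

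For the inductive step, let $\gamma$ be the longest string with $\gamma 0\prefixeq\sigma$. Then $\sigma$'s expansionary processing at $t_1$ sets $c(\gamma)[t_1+1] = \langle\alpha,N\rangle$ with $N:=2^{r(\gamma)[t_1+1]-r(\sigma)[t_1]}$ and no jump is made at $t_1$. I would enumerate the stages $\tau_1<\dots<\tau_K$ in $(t_1,t_2)$ at which $\gamma$ is applied, not threatened, and expansionary while carrying the inherited $\alpha$-label in its counter; by construction $K\leq N$, with strict inequality possible only if $\gamma$ is initialized somewhere in the window, which by inspection of the initialization rules can only happen simultaneously with an initialization of $\sigma$ and is therefore covered by the relaxed hypothesis. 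As in the original lemma, $r(\gamma)$ remains equal to $r(\gamma)[t_1+1]$ throughout these iterations. Applying the induction hypothesis at each $\tau_i$ to $\gamma$ (of strictly fewer zeros than $\sigma$) gives $\sum_{t\in I\cap[\tau_i,\tau_{i+1}^{\ast})}(x_{t+1}-x_t)\leq 2^{-r(\gamma)[t_1+1]}$, where $\tau_{i+1}^{\ast}$ denotes the next application of $\gamma$ after $\tau_i$. Summing over $i=1,\dots,K$ yields $\sum_{t\in I}(x_{t+1}-x_t)\leq K\cdot 2^{-r(\gamma)[t_1+1]}\leq N\cdot 2^{-r(\gamma)[t_1+1]} = 2^{-r(\sigma)[t_1]}$.

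The main obstacle will be to confirm that no $I$-jumps escape the intervals $[\tau_i,\tau_{i+1}^{\ast})$. For the left tail $[t_1,\tau_1)$, Fact~\ref{fact:counters-on-expansionary-stages} shows that at $t_1$ every $\gamma'$ with $\gamma' 0 \prefixeq \sigma$ has counter $0$ and has not yet received an $\alpha$-counter by $\tau_1$, so nothing contributes. For the right tail, the same Fact applied at the first application of $\gamma$ with $c(\gamma)=0$ after $\tau_K$ ensures that all further descendant counters have been worked back to zero, so that no $I$-jump remains possible before $t_2$. Executing this bookkeeping carefully, together with the observation that initializations of $\gamma$ in the window are always simultaneously initializations of $\sigma$, completes the argument.
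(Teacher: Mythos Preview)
Your proposal is correct and follows the same approach the paper has in mind: the paper's own justification of this Fact is a single sentence observing that the inductive argument of Lemma~\ref{satz:FBNBAA:lem:spruenge-expansionary} still yields the same upper bound, with initializations only able to reduce the sum by resetting counters to zero before all scheduled jumps are executed. Your write-up is considerably more detailed than the paper's, carefully tracking the at most $K\leq N$ iterations of~$\gamma$ and the boundary intervals, but the underlying idea---induction on the number of zeros, with the inequality $K\leq N$ replacing the equality $K=N$ whenever an initialization intervenes---is exactly what the paper intends.
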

To see this, note that the arguments used in the proof of Lemma~\ref{satz:FBNBAA:lem:spruenge-expansionary} still give the same upper bound, but that initializations might prevent jumps that were scheduled from being executed by resetting counters to zero.
\begin{lem}\label{satz:FBNBAA:lem:spruenge-threatened}
	Let $t_1 \in \IN$ and $\sigma \in \SigmaS$ be such that $\sigma$ is applied and threatened in $t_1$. Let $t_2 > t_1$ and $I := \{t \in J \mid t_1 \leq t < t_2 \text{ and } u(t) = t_1 \}$.
 If $t_2$ is the next stage at which $\sigma$ is applied and $\sigma$ has not been initialized between $t_1$ and $t_2$, then we have $\sum_{t \in I} (x_{t+1} - x_t) = 2^{-w(\sigma)[t_1]}$.
\end{lem}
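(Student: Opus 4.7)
My plan is to split into cases according to which branch of the negative requirement step is taken at stage~$t_1$, namely whether there exists $\gamma \in \SigmaS$ with $\gamma 0 \prefixeq \sigma$ and $r(\gamma)[t_1+1] \geq w(\sigma)[t_1]$.

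In the first case (no such $\gamma$ exists) I can read off immediately from the construction that $x_{t_1+1}-x_{t_1}=2^{-w(\sigma)[t_1]}$, so $t_1 \in J$ and $u(t_1)=t_1$ by case~(1) of the definition of~$u$. It then remains to argue that no $t \in J$ with $t_1 < t < t_2$ satisfies $u(t)=t_1$. For this I will observe that no counter of the form $\langle \sigma, \cdot \rangle$ is created at stage~$t_1$, and since $s(\sigma)[t_1+1]=1$ and $\sigma$ is not initialized in $(t_1, t_2)$, the strategy $\sigma$ cannot be threatened again before $t_2$, so no fresh counter labelled~$\sigma$ can come into being either. Consequently $I = \{t_1\}$ and the sum evaluates to $2^{-w(\sigma)[t_1]}$.

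In the second case I let $\gamma$ be the longest such string. By construction $t_1 \notin J$ and $c(\gamma)[t_1+1] = \langle \sigma, K\rangle$ with $K := 2^{r(\gamma)[t_1+1]-w(\sigma)[t_1]}$. Since $\gamma \prefix \sigma$, any initialization of $\gamma$ in $(t_1,t_2)$ would also initialize~$\sigma$, so $\gamma$ is neither initialized nor threatened in that interval. I then plan to show that $c(\gamma)$ is decremented from $K$ down to $0$ in exactly $K$ steps within $(t_1, t_2)$, and that $r(\gamma)$ remains fixed at the value $r(\gamma)[t_1+1]$ throughout, since $r(\gamma)$ only changes in the $c(\gamma)=0$ branch of step~(3). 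For each of these $K$ stages at which $\gamma$ is applied and expansionary with nonzero counter $\langle \sigma, \cdot \rangle$, I will invoke Lemma~\ref{satz:FBNBAA:lem:spruenge-expansionary} (falling back on Fact~\ref{remark:spruenge-expansionary-weak} if proper extensions of $\gamma$ get reinitialized between consecutive rounds) to see that the jumps occurring until the next such stage, restricted to those with $u$-value~$t_1$, sum to exactly $2^{-r(\gamma)[t_1+1]}$. Summing over all $K$ rounds yields $K \cdot 2^{-r(\gamma)[t_1+1]} = 2^{-w(\sigma)[t_1]}$, as claimed.

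The hard part will be the bookkeeping in the second case. I must rule out that any intermediate strategy $\sigma'$ with $\gamma 0 \prefixeq \sigma' \prefix \sigma$ overwrites $c(\gamma)$ in $(t_1, t_2)$, and I must pin down that the counter reaches~$0$ exactly by stage~$t_2$. For the first point, a threatened $\sigma'$ would initialize~$\sigma$, contradicting the hypothesis; while an expansionary $\sigma'$ with nonzero counter would, by Fact~\ref{fact:counters-on-expansionary-stages} applied to~$\gamma$ the next time $\gamma$ is applied, force that counter to have already propagated up to~$\gamma$, leaving no room for interference with the label on $c(\gamma)$. For the second point, $\sigma$ being applied at $t_2$ forces $\gamma 0$ to be applied at $t_2$, and step~(3) permits this only when $c(\gamma)[t_2]=0$; combined with the fact that no decrement occurs beyond those induced by the counter set at $t_1+1$, this pins the number of expansionary rounds of $\gamma$ to exactly~$K$.
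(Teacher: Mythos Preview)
Your proposal is correct and follows essentially the same route as the paper's proof: the same case split on the existence of~$\gamma$, the direct computation when no~$\gamma$ exists, and in the other case the reduction to~$K = 2^{r(\gamma)[t_1+1]-w(\sigma)[t_1]}$ applications of Lemma~\ref{satz:FBNBAA:lem:spruenge-expansionary} at~$\gamma$, each contributing $2^{-r(\gamma)[t_1+1]}$. The paper compresses your bookkeeping into the phrase ``by construction, this is a counter indicating how many stages at which $\gamma$ is applied, not threatened and expansionary have to occur before $\sigma$ can be applied again'' together with the observation that $r(\gamma)[t] = r(\gamma)[t_1+1]$ for all $t \in I$; you spell out the same facts explicitly.

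One small remark: your parenthetical about falling back on Fact~\ref{remark:spruenge-expansionary-weak} ``if proper extensions of~$\gamma$ get reinitialized'' is unnecessary. The hypothesis of Lemma~\ref{satz:FBNBAA:lem:spruenge-expansionary} concerns initialization of~$\gamma$ itself, not of its extensions, and you have already argued that any initialization of~$\gamma$ would also initialize~$\sigma$. So Lemma~\ref{satz:FBNBAA:lem:spruenge-expansionary} always applies with equality, and Fact~\ref{remark:spruenge-expansionary-weak} is never needed here. Likewise, your worry about intermediate $\sigma'$ with $\gamma 0 \prefixeq \sigma' \prefix \sigma$ overwriting $c(\gamma)$ dissolves more directly than you suggest: while $c(\gamma) > 0$ the substage $\gamma 0$ is never reached, hence no such $\sigma'$ is applied at all.
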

\begin{proof}
	If there is no longest $\gamma \in \SigmaS$ with $\gamma0 \prefixeq \sigma$ and $r(\gamma)[t_1+1]\geq w(\sigma)[t_1]$, then $I$ contains only one element and we have
	\begin{equation*}
		\sum_{t \in I}\; (x_{t+1} - x_t) = x_{t_1 + 1} - x_{t_1} = 2^{-w(\sigma)[t_1]}.
	\end{equation*}
	Otherwise we have $c(\gamma)[t_1+1] = \left\langle \sigma, 2^{r(\gamma)[t_1+1]-w(\sigma)[t_1]} \right\rangle $. By construction, this is a counter indicating how many stages at which $\gamma$ is applied, not threatened, and expansionary have to occur before $\sigma$ can be applied again. Noting that ${r(\gamma)[t_1+1]=r(\gamma)[t]}$ for all~${t \in I}$ by construction and applying Lemma \ref{satz:FBNBAA:lem:spruenge-expansionary}, we obtain
	\begin{equation*}
		\sum_{t \in I} \;(x_{t+1} - x_t) = 2^{r(\gamma)[t_1+1]-w(\sigma)[t_1]} \cdot 2^{-r(\gamma)[t_1+1]} = 2^{-w(\sigma)[t_1]}.\qedhere
	\end{equation*}
\end{proof}
\begin{fact}\label{remark:spruenge-threatened-weak}
	Under the same assumptions as in Lemma~\ref{satz:FBNBAA:lem:spruenge-threatened}, except allowing $t_2>t_1$ to be arbitrary and allowing initializations of $\sigma$ between $t_1$ and $t_2$, we still have 
	$\sum_{t \in I} (x_{t+1} - x_t) \leq 2^{-w(\sigma)[t_1]}$.
\end{fact}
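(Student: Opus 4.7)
The plan is to mirror the proof of Lemma~\ref{satz:FBNBAA:lem:spruenge-threatened}, replacing Lemma~\ref{satz:FBNBAA:lem:spruenge-expansionary} with its weak counterpart Fact~\ref{remark:spruenge-expansionary-weak}. The guiding observation is that allowing $t_2$ to be arbitrary and permitting initializations of $\sigma$ between $t_1$ and $t_2$ can only \emph{decrease} the sum on the left-hand side, since initializations reset counters to zero and thereby erase scheduled jumps that would otherwise have been executed.

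First I would dispatch the easy case in which there is no longest $\gamma \in \SigmaS$ with $\gamma 0 \prefixeq \sigma$ and $r(\gamma)[t_1+1] \geq w(\sigma)[t_1]$. In this case, the construction performs a direct jump of size $2^{-w(\sigma)[t_1]}$ at stage $t_1$ and schedules no $\langle \sigma, \cdot\rangle$-counter anywhere. Hence no jump at a later stage can carry the $u$-value $t_1$, so $I = \{t_1\}$ and the sum is exactly $2^{-w(\sigma)[t_1]}$, independent of the choice of $t_2$ and of any initializations.

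In the remaining case, we set $c(\gamma)[t_1+1] = \langle \sigma, K\rangle$ with $K := 2^{r(\gamma)[t_1+1] - w(\sigma)[t_1]}$ and $x_{t_1+1} = x_{t_1}$. Every jump $t \in I$ must then occur at a stage where $\gamma$ (or some ancestor reached by further delegations) is applied and expansionary while still holding a $\langle \sigma, \cdot\rangle$-counter value inherited from this initial scheduling. I would enumerate the (at most $K$) expansionary applications of $\gamma$ in $(t_1, t_2)$ at which $c(\gamma)$ still carries the label $\sigma$, and apply Fact~\ref{remark:spruenge-expansionary-weak} to each: each such phase contributes at most $2^{-r(\gamma)[t_1+1]}$ to the total sum in $I$ (using that $r(\gamma)[t]$ is non-decreasing, so subsequent $r$-values can only make the bound tighter). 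Summing over at most $K$ phases gives the desired $K \cdot 2^{-r(\gamma)[t_1+1]} = 2^{-w(\sigma)[t_1]}$.

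The main obstacle I anticipate is to rule out a ``contamination'' of $I$ by jumps stemming from a \emph{fresh} scheduling at $\gamma$ arising from a re-threatening of $\sigma$: after an initialization, $\sigma$ could again be applied and threatened at some stage $t_1' \in (t_1, t_2)$, producing a new counter $\langle \sigma, K'\rangle$ at some $\gamma'$. Such later jumps would satisfy $u(t) = t_1'$ rather than $u(t) = t_1$, so they are \emph{not} in $I$; but I still need to ensure that the old mass and the new mass do not coexist in a way that inflates the sum. Here I would invoke Fact~\ref{fact:counters-on-expansionary-stages} at the stage $t_1'$, which forces $c(\gamma)[t_1'] = 0$, so the old counter mass has been exhausted (by execution) or erased (by initialization) before $t_1'$. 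This cleanly separates the contributions of distinct threatening stages of $\sigma$ and completes the justification of the bound.
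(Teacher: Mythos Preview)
Your proposal is correct and follows essentially the same approach as the paper: mirror the proof of Lemma~\ref{satz:FBNBAA:lem:spruenge-threatened} with Fact~\ref{remark:spruenge-expansionary-weak} in place of Lemma~\ref{satz:FBNBAA:lem:spruenge-expansionary}, and observe that jumps originating from a later re-threatening of $\sigma$ carry a different $u$-value and hence lie outside $I$. Your additional appeal to Fact~\ref{fact:counters-on-expansionary-stages} to separate old and new counter mass is sound but not strictly needed, since the definition of $I$ via $u(t)=t_1$ already excludes the new contributions.
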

Note that in this instance we can allow arbitrary $t_2>t_1$ because if an initialization occurs, and $\sigma$ is applied and threatened again afterwards, then the stages at which the jumps resulting from that threat are executed cannot be in $I$ by definition.

\begin{kor}\label{jsjhfarjslkdgssfg}
		Let $t_1, t_2 \in \IN$ with $t_1 < t_2$ and let $I := \{t \in J \mid t_1 \leq t < t_2 \}$. Then,
		\begin{equation*}
			\sum_{t \in I} \; (x_{t+1} - x_t) \leq \sum_{\mathclap{t' \in u(I)}} \; 2^{-w(\truepath[t'])[t']}.\qedhere
		\end{equation*}
\end{kor}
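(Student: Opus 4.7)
The plan is to partition $I$ according to the $u$-preimage and apply Fact~\ref{remark:spruenge-threatened-weak} to each piece. Informally, every jump can be traced back via $u$ to a single threat event, and the total size of all jumps stemming from one and the same threat is governed by that threat's weight $2^{-w(\sigma)[t']}$, so what remains is merely a regrouping argument.

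First, I would write $I = \bigsqcup_{t' \in u(I)} I_{t'}$ with $I_{t'} := \{t \in I \mid u(t) = t'\}$. Both cases in the definition of $u$ force $t \geq t'$, so $I_{t'} \subseteq \{t \in J \mid t' \leq t < t_2 \text{ and } u(t) = t'\}$; note also that $u(I)$ is finite as $I$ is.

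Second, I would verify that for each $t' \in u(I)$ the strategy $\truepath[t']$ is applied and threatened at $t'$. Fix any $t \in I_{t'}$ and consult the two cases of $u$: if case~(1) applies then $t=t'$ and the settling strategy of~$t$ is applied and threatened at~$t'$; if case~(2) applies then the strategy $\alpha$ labeling the counter $c(\sigma)[t] = \langle \alpha, k+1\rangle$ is applied and threatened at~$t'$. In either scenario, Fact~\ref{asdjltzhjkqwehk} says that the stage on which $t'$ settles is precisely the strategy applied and threatened at~$t'$, so that strategy must be $\truepath[t']$.

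Third, I would apply Fact~\ref{remark:spruenge-threatened-weak} to $\sigma := \truepath[t']$, instantiating its ``$t_1$'' as $t'$ and its ``$t_2$'' as the corollary's $t_2$; the weakened version of the fact is crucial here since it accepts arbitrary endpoints and tolerates any intermediate initializations of $\truepath[t']$, neither of which can be excluded a priori. This yields
\[
\sum_{t \in I_{t'}} (x_{t+1} - x_t) \;\leq\; 2^{-w(\truepath[t'])[t']},
\]
and summing over $t' \in u(I)$ delivers the claim. The main delicate point is the case analysis identifying the threatened strategy at~$t'$ with $\truepath[t']$; once that is in place, the corollary is essentially an immediate bookkeeping consequence of Fact~\ref{remark:spruenge-threatened-weak}.
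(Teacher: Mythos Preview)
Your proposal is correct and is exactly the argument the paper has in mind: the corollary is stated without proof immediately after Fact~\ref{remark:spruenge-threatened-weak}, and your partition of $I$ into the fibers $I_{t'}$ together with the identification $\truepath[t']=$ ``the strategy threatened at $t'$'' (via Fact~\ref{asdjltzhjkqwehk} and the two cases of~$u$) followed by an application of Fact~\ref{remark:spruenge-threatened-weak} to each fiber is precisely the intended derivation.
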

\begin{lem}\label{sdfjkasdjlkfgjkleknjjxvc}
	Let $I \subseteq J$, let $\sigma \in \SigmaS$ and let $t_0 \leq \min u(I)$. Then we have
	\begin{equation*}
		\sum_{\mathclap{t' \in u(I)\colon \truepath[t']=\sigma  }}
	\; 2^{-w(\sigma)[t']} \leq 2^{-w(\sigma)[t_0]+1}.
	\end{equation*}
\end{lem}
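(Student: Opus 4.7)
The plan is to reduce the sum to a geometric series by invoking Lemma~\ref{satz:FBNBAA:lem01} together with the monotonicity of $(w(\sigma)[t])_t$.

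First I would establish that every $t' \in u(I)$ with $\truepath[t'] = \sigma$ is a stage at which $\sigma$ is applied and threatened. This follows directly from the definition of $u$: in case (1) of the definition, $u(t) = t$ and $t$ settles on the strategy $\sigma'$ that is applied and threatened at~$t$, so $\truepath[t] = \sigma'$; in case (2), $u(t) = t'$ is by definition a stage at which some strategy $\alpha$ was applied and threatened, and by Fact~\ref{asdjltzhjkqwehk} such a stage settles on~$\alpha$. Hence the indexing condition $\truepath[t'] = \sigma$ forces $\sigma$ to be applied and threatened at $t'$.

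Next I would use the two key facts. By Fact~\ref{satz:FBNBAA:lem03} the sequence $(w(\sigma)[t])_t$ is non-decreasing, and since $t_0 \leq \min u(I)$ we have $w(\sigma)[t'] \geq w(\sigma)[t_0]$ for every $t'$ appearing in the sum. By Lemma~\ref{satz:FBNBAA:lem01}, for each $m \geq w(\sigma)[t_0]$ there is \emph{at most one} stage $t' \geq t_0$ at which $\sigma$ is applied and threatened with $w(\sigma)[t'] = m$. Consequently the map $t' \mapsto w(\sigma)[t']$, restricted to the set $\{t' \in u(I) : \truepath[t'] = \sigma\}$, is injective and takes values in $\{w(\sigma)[t_0], w(\sigma)[t_0]+1, w(\sigma)[t_0]+2, \dots\}$.

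Combining these observations yields
\begin{equation*}
	\sum_{t' \in u(I)\colon \truepath[t'] = \sigma} 2^{-w(\sigma)[t']} \;\leq\; \sum_{m \geq w(\sigma)[t_0]} 2^{-m} \;=\; 2^{-w(\sigma)[t_0]+1},
\end{equation*}
which is the desired bound. There is no real obstacle here: the whole argument is a bookkeeping observation, and the only subtlety worth flagging is the link between $u(I)$ and the set of threatened stages for~$\sigma$, which is precisely where the definition of $u$ and Fact~\ref{asdjltzhjkqwehk} are used.
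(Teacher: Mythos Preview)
Your proof is correct and follows essentially the same approach as the paper: both observe that every $t'$ in the sum is a stage at which $\sigma$ is applied and threatened, and then bound the sum by a geometric series using that the witness values at such stages are distinct and at least $w(\sigma)[t_0]$. The only cosmetic difference is that the paper notes the witness actually increases by at least~$2$ between consecutive threatened stages (since an intervening initialization sets $w(\sigma)[t+1]=\nu(\sigma)+t+2$), yielding the series $\sum_{k\ge 0}2^{-(w(\sigma)[t_0]+2k)}$, whereas you invoke Lemma~\ref{satz:FBNBAA:lem01} directly to get injectivity and the series $\sum_{m\ge w(\sigma)[t_0]}2^{-m}$; both sums are bounded by $2^{-w(\sigma)[t_0]+1}$.
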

\begin{proof}
	First note that by definition of $u$ all the numbers $t'$ that appear in the sum on the left-hand side are stages where $\sigma$ is applied and threatened. But, according to Lemma \ref{satz:FBNBAA:lem01}, between two such stages, $\sigma$ must be initialized, which by construction increases the value of $\sigma$'s witness by at least $2$.
	Thus,	
	\begin{equation*}
		\sum_{\mathclap{t' \in u(I)\colon \truepath[t']=\sigma}}
		\; 2^{-w(\sigma)[t']}
		\leq \sum_{k=0}^{\infty} \;
		2 ^{-(w(\sigma)[t_0]+2k)}
		\leq 2^{-w(\sigma)[t_0]+1}. \qedhere
	\end{equation*}
\end{proof}

From these facts it follows that we indeed construct a left-computable number.
\begin{prop}\label{satz:FBNBAA:prop:konvergenz}
	The sequence $(x_t)_t$ is computable, non-decreasing, and bounded from above. Thus, its limit $x := \lim_{t\to\infty} x_t$ is a left-computable number.
\end{prop}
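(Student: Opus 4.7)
The plan is to dispatch the three claims separately. Computability of $(x_t)_t$ is immediate from the construction: stage $t$ proceeds through at most $t+1$ substages, each consisting of finitely many decidable tests on finitely many already-computed parameters, and sets $x_{t+1}$ either to $x_t$ or to $x_t+2^{-k}$ for an explicitly named exponent $k$. Non-decreasingness is equally clear, since the only modifications to $x_t$ are additions of nonnegative powers of $2$.

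The real content is the upper bound on the sequence. The plan is to show that, uniformly in $t_2\in\IN$, the partial sum $\sum_{t\in J,\,t<t_2}(x_{t+1}-x_t)$ is bounded by the absolute constant~$4$; since $x_0=0$ and the sequence is non-decreasing, this will give $x_t\leq 4$ for all $t$, whence $(x_t)_t$ converges and its limit $x$ is left-computable.

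To produce this bound, the idea is to combine the two estimates already proved. Fixing $t_2\geq 1$ and writing $I:=\{t\in J\mid t<t_2\}$, Corollary~\ref{jsjhfarjslkdgssfg} applied with $t_1:=0$ yields
\begin{equation*}
\sum_{t\in I}(x_{t+1}-x_t)\ \leq\ \sum_{t'\in u(I)}2^{-w(\truepath[t'])[t']}.
\end{equation*}
Partitioning $u(I)$ according to the value of $\truepath[t']$ and applying Lemma~\ref{sdfjkasdjlkfgjkleknjjxvc} with $t_0:=0$ to each of the resulting inner sums bounds the right-hand side by
\begin{equation*}
\sum_{\sigma\in\SigmaS}2^{-w(\sigma)[0]+1}\ =\ \sum_{\sigma\in\SigmaS}2^{-\nu(\sigma)+1}\ =\ 2\sum_{n=0}^{\infty}2^{-n}\ =\ 4,
\end{equation*}
using that $w(\sigma)[0]=\nu(\sigma)$ by the initial values fixed at the start of the construction and that $\nu$ is a bijection onto $\IN$.

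I anticipate no serious obstacle here: once Corollary~\ref{jsjhfarjslkdgssfg} and Lemma~\ref{sdfjkasdjlkfgjkleknjjxvc} are in hand, the proof is a brief double-sum rearrangement. The two points requiring a line of care are that $u$ is defined on every element of $J$ (every positive jump falls under case~(1) or case~(2) of the definition of $u$, as is visible by inspecting the only places in the construction where $x_{t+1}\neq x_t$) and that the swap of summation is legitimate (all terms being nonnegative, and each finite partial sum of the inner series being uniformly bounded by~$4$ via the computation above, this is routine).
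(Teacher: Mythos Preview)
Your proposal is correct and follows essentially the same route as the paper: both combine Corollary~\ref{jsjhfarjslkdgssfg} (with $t_1=0$) and Lemma~\ref{sdfjkasdjlkfgjkleknjjxvc} (with $t_0=0$) via a partition of $u(I)$ according to $\truepath[t']$, reducing to $\sum_{\sigma\in\SigmaS}2^{-\nu(\sigma)+1}$. The only cosmetic difference is that you take finite $t_2$ and evaluate the bound explicitly as~$4$, whereas the paper passes directly to the infinite sum and merely records that it is finite.
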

\begin{proof}
	It is clear that $(x_t)_t$ is computable and non-decreasing. We show that it converges by showing that  $\sum_{t=0}^{\infty} (x_{t+1} - x_t)$ converges. 
	To see this, we apply Corollary~\ref{jsjhfarjslkdgssfg} with $t_1=0$ and $t_2 \to \infty$ as well as Lemma~\ref{sdfjkasdjlkfgjkleknjjxvc}  to obtain 
	\begin{align*}
		\sum_{t=0}^{\infty} \; (x_{t+1} - x_t) 
		&= \sum_{t \in J} \; (x_{t+1} - x_t) \\ 
		&\leq \sum_{\mathclap{t' \in u(J)}} \; 2^{-w(\truepath[t'])[t']} \\
		&= \sum_{\mathclap{\sigma \in \SigmaS}} 
		\;\;\sum_{t' \in u(J)\colon \truepath[t']=\sigma} \!\!\!\!\!\!\!\!\!\!  2^{-w(\sigma)[t']} \\
		&\leq \sum_{\mathclap{\sigma \in \SigmaS}} \; 2^{-w(\sigma)[0]+1} \\
		&= \sum_{\mathclap{\sigma \in \SigmaS}} \; 2^{-\nu(\sigma)+1} \\
		&< \infty.\qedhere
	\end{align*}
\end{proof}

We proceed with some observations that will be helpful later to establish that all requirements are met by the construction.
The next lemma establishes that all jumps scheduled for a strategy on the true path will eventually be executed if we assume that we are already past this strategy's last initialization.
	\begin{lem}\label{satz:FBNBAA:lem07}
	Let $\sigma \prefix \truepath$ and $e:=\left|\sigma\right|$. Let $t_0 \in \IN$ be the earliest stage after which $\sigma$~is no longer initialized. Let $t_1 \geq t_0$ be a stage at which $\sigma$ is applied and threatened, and let $t_2 > t_1$ be another stage at which $\sigma$ is applied. Then we have
	\begin{enumerate}
		\item $x_{t_2} - x_{t_1} \geq 2^{-w(\sigma)[t_1]}$,
		\item $x_{t_2} - x_{\varphi_{e}(\ell(e)[t_1])} \geq 2^{-w(\sigma)[t_1]}$. 
	\end{enumerate}
\end{lem}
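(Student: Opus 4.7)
The plan is to reduce both statements to Lemma~\ref{satz:FBNBAA:lem:spruenge-threatened}, which already accounts for the cumulative size of the jumps triggered when $\sigma$ is applied and threatened. The hypothesis $t_1 \geq t_0$ is exactly what allows us to rule out initializations of $\sigma$ between $t_1$ and the next time $\sigma$ is applied, so that the earlier lemma applies cleanly.

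First I would let $t_2^\ast$ denote the smallest stage in $(t_1, t_2]$ at which $\sigma$ is applied; this stage exists because $\sigma$ is applied at $t_2$ itself. Since $t_1 \geq t_0$, no initialization of $\sigma$ occurs between $t_1$ and $t_2^\ast$, so all hypotheses of Lemma~\ref{satz:FBNBAA:lem:spruenge-threatened} are met with $t_2^\ast$ in place of $t_2$. That lemma then yields $\sum_{t \in I}(x_{t+1} - x_t) = 2^{-w(\sigma)[t_1]}$ for the corresponding set~$I$. Since $(x_t)_t$ is non-decreasing by Proposition~\ref{satz:FBNBAA:prop:konvergenz}, the unrestricted sum $x_{t_2^\ast} - x_{t_1}$ is at least as large, and because $t_2 \geq t_2^\ast$, applying monotonicity once more gives statement~(1).

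For statement~(2) I would observe that since $\sigma$ is threatened at $t_1$, the quantity $\ell(e)[t_1]$ is defined and $\varphi_e(\ell(e)[t_1])[t_1]{\downarrow}$; by the uniform enumeration convention fixed on page~\pageref{enumeration_convention}, this guarantees $\varphi_e(\ell(e)[t_1]) \leq t_1$. Therefore $x_{\varphi_e(\ell(e)[t_1])} \leq x_{t_1}$ by monotonicity, and combining with~(1) gives
\[
	x_{t_2} - x_{\varphi_e(\ell(e)[t_1])} \;\geq\; x_{t_2} - x_{t_1} \;\geq\; 2^{-w(\sigma)[t_1]}.
\]

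There is no real obstacle here: the point is just to notice that what looks like a strengthening of Lemma~\ref{satz:FBNBAA:lem:spruenge-threatened} is in fact an immediate consequence of it, once the standing assumption $t_1 \geq t_0$ is used to exclude any initialization before the next application of~$\sigma$. Part~(2) then adds nothing combinatorial, reducing entirely to part~(1) plus the enumeration convention bounding the output of $\varphi_e$ at converged inputs by the current stage.
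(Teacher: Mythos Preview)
Your proof is correct and follows essentially the same approach as the paper: both parts reduce to Lemma~\ref{satz:FBNBAA:lem:spruenge-threatened} together with monotonicity of $(x_t)_t$ and the enumeration convention. You are merely more explicit than the paper in introducing $t_2^\ast$ to pass from the \emph{next} application of $\sigma$ (as required by Lemma~\ref{satz:FBNBAA:lem:spruenge-threatened}) to an arbitrary later one, a step the paper leaves implicit.
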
\goodbreak
\begin{proof} \; \nopagebreak
	\begin{enumerate}
		\item This follows immediately from Lemma~\ref{satz:FBNBAA:lem:spruenge-threatened}.	
		
		\item By the conventions laid out on page~\pageref{enumeration_convention} and since $(x_t)_t$ is non-decreasing, we have ${t_1 \geq  \varphi_{e}(\ell(e)[t_1])}$, thus ${x_{t_1} \geq  x_{\varphi_{e}(\ell(e)[t_1])}}$, and the claim follows immediately from~(1).\qedhere
	\end{enumerate}
\end{proof}
	
	\begin{lem}\label{sdhajhsfdjashfdgjhkfdssdf}
		Let $e \in \IN$ and $\sigma \prefix \truepath$ with $\left|\sigma\right| = e$. Let $t_0 \in \IN$ be the earliest stage after which $\sigma$ is not initialized anymore. Suppose that $\varphi_{e}$ is total and increasing.
		\begin{enumerate}
			\item There exists a uniquely determined stage $t' \geq t_0$ at which $\sigma$ is applied and threatened.
			\item There exist infinitely many stages at which $\sigma$ is applied and expansionary.
		\end{enumerate}
	\end{lem}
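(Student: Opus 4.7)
The plan is to exploit three stabilisation facts about the parameters of $\sigma$ after $t_0$, together with a single analytic observation about convergence. Since $\sigma \prefix \truepath$, Proposition~\ref{sdfjlw3ljsdadfgjrfasdfdfg} guarantees that $\sigma$ is applied at infinitely many stages and that $t_0$ is finite. Because no initialization of $\sigma$ occurs after $t_0$, the witness $w(\sigma)[t]$ stabilizes to $m := w(\sigma)[t_0]$ for all $t \geq t_0$, and the satisfaction flag $s(\sigma)[t]$ can only transition from $0$ to $1$ (never back) from stage $t_0$ onwards. The crucial analytic ingredient is that, under the hypothesis that $\varphi_e$ is total and increasing, $\ell(e)[t]$ tends to infinity, whence $\varphi_e(\ell(e)[t]) \to \infty$, and therefore $x_{\varphi_e(\ell(e)[t])} \to x := \lim_t x_t$ (the limit existing by Proposition~\ref{satz:FBNBAA:prop:konvergenz}); together with $x_t \to x$ this forces the non-negative differences $x_t - x_{\varphi_e(\ell(e)[t])}$ to tend to $0$.

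For part~(1), uniqueness I would get for free from Lemma~\ref{satz:FBNBAA:lem01}, since after $t_0$ the witness is pinned at $m$. For existence I would argue by contradiction: if $\sigma$ were never applied and threatened at any stage $\geq t_0$, then $s(\sigma)[t] = 0$ throughout. However, the analytic observation above provides a stage $T$ beyond which simultaneously $\ell(e)[t] \geq m$ and $x_t - x_{\varphi_e(\ell(e)[t])} < 2^{-m}$ hold; selecting via Proposition~\ref{sdfjlw3ljsdadfgjrfasdfdfg} any stage $t \geq T$ at which $\sigma$ is applied then shows that all three conditions in the definition of ``threatened'' are met, contradicting the assumption.

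For part~(2), let $t' \geq t_0$ be the stage supplied by~(1); then $s(\sigma)[t] = 1$ for all $t > t'$, so the first two clauses of the definition of ``expansionary'' are satisfied at every sufficiently late stage at which $\sigma$ is applied. Again I would argue by contradiction: if $\sigma$ were expansionary at only finitely many stages, then, since by construction $r(\sigma)$ is modified only when $\sigma$ is applied and expansionary with vanishing counter, the restraint $r(\sigma)[t]$ would eventually stabilize to some final value $r$. At every sufficiently late stage $t$ at which $\sigma$ is applied but not expansionary one would therefore have $x_t - x_{\varphi_e(\ell(e)[t])} \geq 2^{-r}$; since $\sigma$ is applied at infinitely many stages by Proposition~\ref{sdfjlw3ljsdadfgjrfasdfdfg}, this inequality would recur infinitely often, directly contradicting $x_t - x_{\varphi_e(\ell(e)[t])} \to 0$.

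The hardest part is not the argument itself, which is driven by the single limit computation above, but verifying cleanly that the relevant parameters of $\sigma$ behave as advertised after $t_0$; this amounts to unwinding the construction rules for $w$, $s$, and $r$ and checking that the only events that could perturb them (initializations, threatened applications, expansionary applications with vanishing counter) either do not occur or are controlled by the assumptions of the lemma.
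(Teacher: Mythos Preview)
Your proposal is correct and follows essentially the same approach as the paper: both parts hinge on the observation that $x_t - x_{\varphi_e(\ell(e)[t])} \to 0$ combined with the stabilization of $w(\sigma)$, $s(\sigma)$, and (for part~(2)) $r(\sigma)$ after $t_0$. The only cosmetic differences are that you phrase part~(2) as a contradiction argument where the paper directly exhibits the next expansionary stage, and that you invoke Lemma~\ref{satz:FBNBAA:lem01} for uniqueness in part~(1) where the paper argues directly from the satisfaction flag.
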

\goodbreak
	\begin{proof} \; \nopagebreak
		\begin{enumerate}
			\item Since $\sigma$ is not initialized after $t_0$, we have $w(\sigma)[t] = w(\sigma)[t_0]$ for all $t \geq t_0$. Since $s(\sigma)[t_0] = 0$, there must be an earliest stage $t' \geq t_0$ at which $\sigma$ is applied and threatened. By construction, we have $s(\sigma)[t' + 1] = 1$. Since $\sigma$ is not initialized after $t'$, we have that $\sigma$ cannot be threatened again.
			\item 
			Let $t'$ be as in (1), and let 			
			$t_1$ be an arbitrary number with $t_1 \geq t'$. We claim that there is a stage $t_2 \geq t_1$ satisfying the properties
			\begin{itemize}
				\item $s(\sigma)[t_2] = 1$,
				\item $\ell(e)[t_2] \geq 0$,
				\item $x_{t_2} - x_{\varphi_{e}(\ell(e)[t_2])} < 2^{-r(\sigma)[t_1]}$.
			\end{itemize}
			The first property clearly holds for any stage later than $t'$. The second property must hold for $t_2$ large enough since the assumption that $\varphi_{e}$ is total and increasing implies that $(\ell(e)[t])_t$ tends to infinity. 
			The third condition follows from the same argument together with the fact that $(x_t)_t$ converges.
			
			Finally note that $\sigma$ was by choice of $t_2$ not expansionary at any stage between $t_1$ and $t_2$. By construction this implies that $r(\sigma)[t_1] = r(\sigma)[t_2]$, and thus all conditions for being expansionary are fulfilled by
			$\sigma$ at stage~$t_2$.\qedhere
		\end{enumerate}
	\end{proof}
	
		We are ready to show that all negative requirements are satisfied.
	\begin{prop}\label{satz:FBNBAA:prop:negativ}
		$\mathcal{N}_e$ is statisfied for all $e \in \IN$.
	\end{prop}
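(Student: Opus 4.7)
The plan is to fix $e \in \IN$, assume $\varphi_e$ is total and increasing, and exhibit a concrete witness $m$ for which $x - x_{\varphi_e(m)} \geq 2^{-m}$. The key is to locate the unique stage on the true path at which the corresponding strategy~$\sigma$ of length~$e$ is applied and threatened, and then to show that the threat is not just temporarily neutralized at that stage but remains satisfied in the limit.

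First, I would appeal to Proposition~\ref{sdfjlw3ljsdadfgjrfasdfdfg} to obtain the strategy $\sigma \prefix \truepath$ with $\left|\sigma\right| = e$, together with the earliest stage $t_0$ after which $\sigma$ is no longer initialized. Since $\varphi_e$ is total and increasing, Lemma~\ref{sdhajhsfdjashfdgjhkfdssdf}~(1) supplies a unique stage $t' \geq t_0$ at which $\sigma$ is applied and threatened. Set $m := \ell(e)[t']$; by the definition of ``threatened'' we have $m \geq w(\sigma)[t']$ and $x_{t'} - x_{\varphi_e(m)} < 2^{-w(\sigma)[t']}$.

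Next, using Lemma~\ref{sdhajhsfdjashfdgjhkfdssdf}~(2) together with Proposition~\ref{sdfjlw3ljsdadfgjrfasdfdfg}~(1), there are infinitely many stages $t_2 > t'$ at which $\sigma$ is applied. For any such $t_2$, Lemma~\ref{satz:FBNBAA:lem07}~(2) yields
\begin{equation*}
    x_{t_2} - x_{\varphi_e(\ell(e)[t'])} \geq 2^{-w(\sigma)[t']}.
\end{equation*}
Taking the limit $t_2 \to \infty$ along this infinite subsequence, and invoking Proposition~\ref{satz:FBNBAA:prop:konvergenz} for the existence of $x = \lim_t x_t$, gives
\begin{equation*}
    x - x_{\varphi_e(m)} \geq 2^{-w(\sigma)[t']} \geq 2^{-m},
\end{equation*}
where the final inequality uses $m \geq w(\sigma)[t']$. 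This establishes $\mathcal{N}_e$.

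The only real subtlety, and the step where one must be careful, is justifying that the threat remains defeated after stage $t'$: a priori, later stages at which $\sigma$ is not applied could, through the action of other strategies, erode the gap $x_{t_2} - x_{\varphi_e(m)}$; but Lemma~\ref{satz:FBNBAA:lem07}~(2) was proved precisely to rule this out by exploiting the non-decreasing nature of $(x_t)_t$ and the fact that $\sigma$ is not initialized again after $t_0$. Once this lemma is invoked, the rest of the argument is a direct limit passage.
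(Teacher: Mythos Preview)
Your proof is correct and follows essentially the same route as the paper's. The only cosmetic differences are that the paper takes $m := w(\sigma)[t']$ rather than your larger $m := \ell(e)[t']$ (both work, since $\ell(e)[t'] \geq w(\sigma)[t']$ and $\varphi_e$ is increasing), and the paper uses a single later stage $t_2$ together with $x \geq x_{t_2}$ instead of your limit passage; your closing remark about the gap being ``eroded'' is unnecessary, since $(x_t)_t$ is non-decreasing and $x_{\varphi_e(m)}$ is fixed, so the gap can only grow.
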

	
	\begin{proof}
		Let $e \in \IN$. Suppose that $\varphi_{e}$ is total and increasing. Let $\sigma \prefix \truepath$ with $\left|\sigma\right| = e$, let $t_0 \in \IN$ be the earliest stage after which $\sigma$ is not initialized anymore and let $t_1 \geq t_0$ be the stage at which $\sigma$ is applied and threatened; this stage exists due to Lemma~\ref{sdhajhsfdjashfdgjhkfdssdf}. 
		Then we have  $\ell(e)[t_1] \geq w(\sigma)[t_1]$ in particular. Let $t_2 > t_1$ be the next stage at which $\sigma$ is applied. If we write $m:=w(\sigma)[t_1]$, then, using Lemma~\ref{satz:FBNBAA:lem07},
		\[
			x - x_{\varphi_{e}(m)} = x - x_{\varphi_{e}(w(\sigma)[t_1])} 
			\geq x_{t_2} - x_{\varphi_{e}(\ell(e)[t_1])} 
			\geq 2^{-w(\sigma)[t_1]} 
			= 2^{-m}.
	\]
		Thus, there exists a number $m \in \IN$ with $x - x_{\varphi_{e}(m)} \geq 2^{-m}$, and $\mathcal{N}_e$ is satisfied.
	\end{proof}

	\begin{kor} 
		The number $x$ is not computable.\qed
	\end{kor}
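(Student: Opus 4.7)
The plan is to derive this as a direct consequence of Proposition~\ref{satz:FBNBAA:prop:negativ} combined with Fact~\ref{lem:charakterisierung-lc-ec}. First, I would observe that $(x_t)_t$ is a computable non-decreasing sequence of rational numbers converging to $x$, as established in Proposition~\ref{satz:FBNBAA:prop:konvergenz}. Thus, by Fact~\ref{lem:charakterisierung-lc-ec}, in order to conclude that $x$ is \emph{not} computable, it suffices to exhibit a single computable non-decreasing sequence of rationals converging to $x$ (namely $(x_t)_t$ itself) for which no computable increasing function $s\colon \IN\to\IN$ satisfies $x - x_{s(n)} < 2^{-n}$ for all $n\in\IN$.

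Assume for contradiction that such an $s$ exists. Then $s$ is a total computable increasing function, so there is an index $e \in \IN$ with $s = \varphi_e$, and in particular $\varphi_e$ is total and increasing. By Proposition~\ref{satz:FBNBAA:prop:negativ}, the negative requirement $\mathcal{N}_e$ is then satisfied, so there exists an $m\in\IN$ with $x - x_{\varphi_e(m)} \geq 2^{-m}$. But this directly contradicts the assumed property $x - x_{s(m)} = x - x_{\varphi_e(m)} < 2^{-m}$.

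Hence no such $s$ can exist, and applying Fact~\ref{lem:charakterisierung-lc-ec} in its contrapositive form yields that $x$ is not computable. This argument is essentially a reformulation of the definition of the negative requirements and should fit in a couple of lines; no real obstacle is expected, since all of the work has already been done in the preceding propositions.
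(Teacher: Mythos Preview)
Your proposal is correct and follows exactly the approach the paper intends: the corollary is marked with a bare \qed\ precisely because it is immediate from Proposition~\ref{satz:FBNBAA:prop:negativ} via Fact~\ref{lem:charakterisierung-lc-ec}, as the paper already spelled out in the paragraph introducing the requirements~$\mathcal{N}_e$. Nothing more is needed.
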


	The following lemma is the main tool for proving that $(x_t)_t$ converges nearly computably.

	\begin{lem}\label{satz:FBNBAA:lem08}
		Let $\sigma \prefix \truepath$ and $t_0 \in \IN$ be the earliest stage after which $\sigma$ is no longer initialized and after which there are no more stages at which $\sigma$ is applied and threatened. Let $t_1, t_2 \in \IN$ with $t_0 \leq t_1< t_2$ be two consecutive stages at which 
		$\sigma$ is applied and expansionary. Then we have
		\begin{equation*}
			x_{t_2} - x_{t_1} \leq 2^{-r(\sigma)[t_1] + 1}.
		\end{equation*}
	\end{lem}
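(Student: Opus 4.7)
The plan is to bound $x_{t_2}-x_{t_1}$, that is, the total of all jumps in $I:=[t_1,t_2)\cap J$, by first applying Corollary~\ref{jsjhfarjslkdgssfg} to obtain
\[
x_{t_2}-x_{t_1}=\sum_{t\in I}(x_{t+1}-x_t)\leq\sum_{t'\in u(I)}2^{-w(\truepath[t'])[t']},
\]
and then decomposing $u(I)$ into old threats $U_{\mathrm{old}}:=u(I)\cap[0,t_1)$ and new threats $U_{\mathrm{new}}:=u(I)\cap[t_1,t_2)$. The goal is to bound each of these contributions by $2^{-r(\sigma)[t_1]}$, so that adding them yields the desired $2^{-r(\sigma)[t_1]+1}$.

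For $U_{\mathrm{new}}$, I would exploit the hypothesis on $t_0$: past $t_0$ neither $\sigma$ nor any of its proper prefixes can be threatened, the latter because any such threat would initialize $\sigma$ and contradict the choice of $t_0$. Hence for every $t'\in U_{\mathrm{new}}$ the threatened strategy $\tau:=\truepath[t']$ is either a strict extension of $\sigma$ or is lex-larger than $\sigma$ while lex-incomparable with it. A careful analysis of the initialization mechanism applied to each such $\tau$ shows that $w(\tau)[t']$ is sufficiently large relative to $r(\sigma)[t_1]$; grouping contributions by $\tau$ and invoking Lemma~\ref{sdfjkasdjlkfgjkleknjjxvc} then yields a geometric-series bound on the total of at most $2^{-r(\sigma)[t_1]}$.

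For $U_{\mathrm{old}}$, the key observation is that $\sigma$ is expansionary at $t_1$ but at no intermediate stage of $(t_1,t_2)$; moreover $\sigma$'s restraint $r(\sigma)$ is updated at most once during this interval (at $t_1$ itself, in case $c(\sigma)[t_1]=0$). If $c(\sigma)[t_1]>0$, I would apply Fact~\ref{remark:spruenge-expansionary-weak} at $t_1$ to bound the contribution of the single threat encoded in $c(\sigma)[t_1]$ by $2^{-r(\sigma)[t_1]}$, noting that no further draining of $\sigma$'s counter can occur before $t_2$. If $c(\sigma)[t_1]=0$, stage $t_1$ continues past $\sigma$'s substage with $r(\sigma)$ incremented to $r(\sigma)[t_1]+1$, which tightens the restraint used for the remainder of the interval and yields the same bound. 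Residual scheduled work at ancestors $\gamma\prec\sigma$ is handled via the analogous cascading argument underlying Fact~\ref{remark:spruenge-expansionary-weak}.

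The main obstacle will be making the $U_{\mathrm{new}}$ bound precise, since it requires a detailed accounting of how witnesses of newly threatened strategies in $[t_1,t_2)$ compare to $r(\sigma)[t_1]$. This relies on the chain of initializations triggered by $\sigma$'s earlier activity (from before $t_0$) together with Fact~\ref{restrait_monotony} linking ancestors' restraints to $\sigma$'s own. Once both contributions are bounded by $2^{-r(\sigma)[t_1]}$, combining them gives $x_{t_2}-x_{t_1}\leq 2^{-r(\sigma)[t_1]+1}$, as required.
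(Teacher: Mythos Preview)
Your decomposition via $u$ into $U_{\mathrm{old}}$ and $U_{\mathrm{new}}$ is essentially the paper's own strategy, which likewise separates ``pending scheduled work at stage $t_1$'' from ``fresh threats arising in $[t_1,t_2)$'' and bounds each by $2^{-r(\sigma)[t_1]}$. However, two points in your sketch need sharpening.

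First, your $U_{\mathrm{new}}$ argument appeals only to the initialization mechanism to bound $w(\tau)[t']$. That works for threats at stages $t'>t_1$ (such $\tau$, necessarily satisfying $\sigma 0<_L\tau$, are initialized at the end of stage $t_1$, so $w(\tau)[t_1+1]=\nu(\tau)+t_1+2$). But it does \emph{not} cover the possible single threat \emph{at} $t_1$ itself, to some $\tau$ with $\sigma 0\prefixeq\tau$: this $\tau$ need not have been recently initialized. The paper handles this case separately: if a jump is actually made, then no $\gamma$ with $\gamma 0\prefixeq\tau$ blocks it, in particular $\sigma$ does not, whence $w(\tau)[t_1]>r(\sigma)[t_1+1]$; and if the jump is delegated, it is delegated to some $\rho$ with $\sigma\prefixeq\rho$ (or would initialize $\sigma$), and $\rho$ is not applied and expansionary before $t_2$. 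So the bound here comes from the \emph{restraint} condition, not from initialization.

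Second, your worry about ``residual scheduled work at ancestors $\gamma\prec\sigma$'' is unnecessary. For $\gamma 0\prefixeq\sigma$, Fact~\ref{fact:counters-on-expansionary-stages} gives $c(\gamma)[t_1]=0$ outright; for $\gamma 1\prefixeq\sigma$, any pending work encodes some $\alpha$ with $\gamma 0\prefixeq\alpha$, hence $\alpha<_L\sigma$, and executing it would initialize $\sigma$, contradicting the hypothesis on $t_0$. So $U_{\mathrm{old}}$ reduces exactly to the single threat encoded in $c(\sigma)[t_1]$ (if nonzero), and Fact~\ref{remark:spruenge-expansionary-weak} finishes that case.

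Once these two points are in place, the bound follows; the paper also notes the mutual exclusion ``$c(\sigma)[t_1]>0$'' versus ``a threat to some $\tau\sqsupseteq\sigma 0$ occurs at $t_1$'', which is precisely your case split on $c(\sigma)[t_1]$.
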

	\begin{proof}
		In order to understand what jumps may be made in stages $t_1$ up to strictly before stage $t_2$, 
		we distinguish between obligations to make jumps that may already stand at the moment when we enter stage $t_1$, and new obligations to make jumps that are created between stages $t_1$ and $t_2$.
		
		First, to quantify jumps due to potential standing obligations, note that in substages $0$ up to $|\sigma|-1$ of stage $t_1$, no jumps are made by construction. Thus, let us consider what jumps may still be scheduled for execution when we are in substage~$|\sigma|$ of stage $t_1$. Consider the following facts:
		\begin{enumerate}
			\item No strategy $\tau <_L \sigma$ will be applied again after stage $t_1$, as otherwise this would lead to initialization of $\sigma$, which contradicts our assumptions. Thus, if any jumps are still scheduled for any such $\tau$ (that is, if $c(\tau)[t_1]>0$), we need not take them into account.
			
			\item For strategies $\tau \in \SigmaS$ with 
			$\tau 0 \prefixeq \sigma$ we must have  $c(\tau)[t_1] = 0$ due to Fact~\ref{fact:counters-on-expansionary-stages}. 	Furthermore, none of these strategies can be applied and threatened anymore from $t_1$ onward, as otherwise that would lead to initialization of $\sigma$, again a contradiction.
		
			\item While for strategies $\tau \in \SigmaS$ with $\tau1 \prefixeq \sigma$ we might have $c(\tau)[t_1]>0$, none of these scheduled jumps will ever be executed from $t_1$ onwards. This is because in this case there needs to exist some $\alpha \suffixeq \tau0$ and some $k \in \IN$ with $c(\tau)[t_1]=\langle \alpha, k \rangle$. When such a jump would be executed, then since $\alpha <_L \sigma$ that would again lead to an initialization of $\sigma$, contradiction. Thus, as before, we need not take such $\tau$'s into account.
			
			\item No $\tau$ with $ \sigma0 <_L \tau$ can be applied at stage $t_1$ by construction. While such $\tau$'s may be applied during stages $t_1 < t < t_2$, note that we set $c(\tau)[t_1+1]=0$ for all of them when they are initialized at the end of stage $t_1$.
			
			\item For a strategy $\tau$ with $\sigma0 \prefixeq \tau$ we may indeed have $c(\tau)[t_1] > 0$.		
			
			\item Similarly, it might be the case that $c(\sigma)[t_1] > 0$.
		\end{enumerate}
		Thus, any jumps made in stages $t_1$ up to $t_2 -1$ are either caused by some strategy as in~(5) or~(6), or must be due to \textit{new} threats to strategies of type~(4) that occur strictly between substage~$|\sigma|$ of stage~$t_1$ and stage $t_2$. We establish upper bounds for all of these cases:
		\begin{itemize}
			\item Concerning strategies of type~(5), by construction, if we make any jump at all while applying them, then these can only occur immediately at stage $t_1$ in substage $|\tau|$.
			We claim that such a jump can not occur due to  $\tau$ being applied and expansionary at $t_1$ with ${c(\tau)[t_1] > 0}$. This is because such a jump would not be made at $t_1$, but would by construction instead be split and scheduled for later execution by some strategy $\rho$ with $\sigma0 \prefixeq \rho0 \prefixeq \tau$, and this strategy $\rho$ will not be applied before stage $t_2$ by construction.
			Thus, the only reason to make a jump in this case is if
			$\tau$ is applied and threatened at $t_1$. Then, by construction the jump made would be of size 
			\[2^{-w(\tau)[t_1]} < 2^{-r(\sigma)[t_1+1]} \leq 2^{-r(\sigma)[t_1]}.\]
			
			\item Concerning~(6), in this case, by construction, we would like to make a jump of size $2^{-r(\sigma)[t_1]}$. The jump may be made immediately, or split and scheduled for later execution by some strategies that are prefixes of $\sigma$; but in either case the total sum of all jumps resulting from this is bounded by $2^{-r(\sigma)[t_1]}$ by Lemma~\ref{satz:FBNBAA:lem:spruenge-expansionary}.
			
			\item Finally, we consider new threats to strategies $\tau$ of type~(4) that occur strictly between substage $|\sigma|$ of stage $t_1$ and stage $t_2$.
			Since these $\tau$'s were all initialized at stage $t_1$, we have $w(\tau)[t_1+1] = \nu(\tau) + t_1 + 2$ for each of them. 
			Thus, applying Lemma~\ref{sdfjkasdjlkfgjkleknjjxvc} to each such $\tau$ and then summing over all of them, the total sum of all jumps made or scheduled when such $\tau$'s are applied in stages between $t_1$ and $t_2$ can be at most $2^{-t_1}$, which by Fact~\ref{satz:FBNBAA:lem03} is at most~$2^{-r(\sigma)[t_1]}$.	
		\end{itemize}
		
		By construction, cases~(5) and~(6) exclude each other, completing the proof.
\end{proof}
		
	After these preparations, we can show that all positive requirements are satisfied.
	\begin{prop}
		$\mathcal{P}_e$ is statisfied 	for all $e \in \IN$.
	\end{prop}
	\begin{proof}
		Let $e \in \IN$. Suppose that $\varphi_{e}$ is total and increasing.	Let $\sigma \prefix \truepath$ with $\left|\sigma\right| = e$. Let $t_0 \in \IN$ be the earliest stage after which $\sigma$ is not initialized anymore. According to Lemma~\ref{sdhajhsfdjashfdgjhkfdssdf} there exist infinitely many stages at which $\sigma$ is applied and expansionary. Now, using~Lemma~\ref{satz:FBNBAA:lem02}, we can infer that there exist infinitely many stages at which $\sigma 0$ is applied. Thus, by construction, the sequence $(r(\sigma)[t])_{t}$ tends to infinity.
		For every $n\in\IN$, write  
		\[t(n):=\min\left\{t \geq t_0\colon\; 
		\parbox{5.8cm}{\centering
			$r(\sigma)[t] \geq n+2$ and $\sigma$ is applied\\
			 and expansionary at stage $t$
		}
		\right\}.\]	
		and then define a function $v \colon  \IN \to \IN$ via $v(n) = \ell(e)[t(n)]$ for all $n\in\IN$. Clearly, $t$ and $v$ are computable. 
		
		We claim that $v$ is a modulus of convergence of the sequence $(x_{\varphi_{e}(t+1)} - x_{\varphi_{e}(t)})_t$. 
		To see this, let $n\in \IN$ and $i \geq v(n)$. Let $t_2 > t_1 \geq t(n)$ be the two consecutive stages at which $\sigma$ is applied and expansionary with $\ell(e)[t_1] \leq i$ and $\ell(e)[t_2] \geq i+1$. Applying Lemma~\ref{satz:FBNBAA:lem08} and the assumption that $\sigma$ is expansionary at $t_1$, we obtain
		\begin{align*}
			x_{\varphi_{e}(i+1)} - x_{\varphi_{e}(i)} &\leq
			x_{\varphi_{e}(\ell(e)[t_2])} - x_{\varphi_{e}(\ell(e)[t_1])} \\
			&= \underbrace{\left( x_{\varphi_{e}(\ell(e)[t_2])} - x_{t_2} \right)}_{\leq 0} + \left( x_{t_2} - x_{t_1} \right) + \left( x_{t_1} - x_{\varphi_{e}(\ell(e)[t_1])} \right) \\
			&\leq  2^{-r(\sigma)[t_1]+1} 
			\makebox[\widthof{$2^{-r(\sigma)[t(n)]+1}$}-\widthof{$2^{-r(\sigma)[t_1]+1}$}]{\,} 
			+ 2^{-r(\sigma)[t_1]} \\
			&\leq  2^{-r(\sigma)[t(n)]+1} + 2^{-r(\sigma)[t(n)]} \\
			&< 
			2^{-n}.\\
		\end{align*}
		Thus $(x_{\varphi_{e}(t+1)} - x_{\varphi_{e}(t)})_t$ converges computably to zero, and $\mathcal{P}_e$ is satisfied.
	\end{proof}
	
	\begin{kor}
		The number $x$ is nearly computable.\qed
	\end{kor}
	
	It remains to show that $x$ is regainingly approximable.
\begin{defi}\label{cut_off_def}
	Let $\sigma \prefix \truepath$ be such that $\varphi_{|\sigma|}$ is total and increasing. Let $t_0 \in \IN$ be the earliest stage after which $\sigma$ is not initialized anymore and let $t' \geq t_0$ be the stage at which $\sigma$ is applied and threatened.
	Then we call $t_\sigma := \max \{t \in J \mid u(t) = t'\}$ the \emph{cut-off stage} for $\sigma$.
\end{defi}

As $\{t \in J \mid u(t) = t'\}$ is non-empty and finite, $t_\sigma$ is well-defined with $t_\sigma \geq t'$.	

\begin{lem}\label{cut_off_properties}
		Let $\sigma \prefix \truepath$ be such that $\varphi_{|\sigma|}$ is total and increasing. Let $t_0 \in \IN$ be the earliest stage after which $\sigma$ is not initialized anymore. Let $t_1 \geq t_0$ be the stage at which $\sigma$ is applied and threatened and let $t_\sigma \geq t_1$ be the cut-off stage for $\sigma$. Then the following statements hold:
		\begin{enumerate}
			\item All strategies~$\tau \in \SigmaS$ with $\sigma \prefix \tau$ or $\sigma <_L \tau$ are initialized at $t_\sigma$.
			\item For all $\tau \in \SigmaS$ we have $c(\tau)[t_\sigma+1] > 0 \Rightarrow  \tau0 <_L \sigma$.
			\item For all $\tau \in \SigmaS$ with $\tau 0 <_L \sigma$ there are no more stages after $t_\sigma$ at which $\tau$ is applied and expansionary. 
			\item If some strategy $\tau$ is applied and threatened at some stage after $t_\sigma$, we must have $\sigma \prefix \tau$ or $\sigma <_L \tau$.
			\item We have $x - x_{t_\sigma+1} \leq 2^{-(t_\sigma+1)}$.
		\end{enumerate}	
	\end{lem}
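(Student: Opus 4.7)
The plan is to analyze the two possible shapes of the cut-off stage $t_\sigma$ and then propagate consequences from each to establish (1)--(5). Either $t_\sigma = t_1$ (so $\sigma$ itself is applied and threatened and executes its jump immediately, because no appropriate delegation target exists), or $t_\sigma > t_1$, in which case $t_\sigma$ is an expansionary stage at which some strategy $\sigma^*$ of the form $\sigma^* = \sigma 0 1^m$ --- sitting at the end of the delegation chain initiated by the threat at $t_1$ --- is applied and expansionary with $c(\sigma^*)[t_\sigma] = \langle \sigma, 1 \rangle$ and executes the last pending jump of that chain. In both sub-cases, the construction's end-of-stage initialization routine initializes exactly the strategies $\tau$ with $\sigma \prefix \tau$ or $\sigma <_L \tau$ (in the second sub-case the counter's label is $\alpha = \sigma$), yielding (1).

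For (2), any $\tau$ with $\sigma \prefix \tau$ or $\sigma <_L \tau$ has $c(\tau)[t_\sigma + 1] = 0$ by (1); among the remaining possibilities, those with $\tau <_L \sigma$ and with $\tau \prefix \sigma$ and $\tau 1 \prefixeq \sigma$ already give $\tau 0 <_L \sigma$. For $\tau \prefix \sigma$ with $\tau 0 \prefixeq \sigma$ I would invoke Fact~\ref{fact:counters-on-expansionary-stages} at stage $t_\sigma$ (when $\sigma$ is applied), together with the observation that stage $t_\sigma$ itself sets no new counters (in sub-case 1 the jump is direct; in sub-case 2 only $c(\sigma^*)$ is decremented from $1$ to $0$). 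The delicate case is $\tau = \sigma$: in sub-case 2 the traversal at $t_\sigma$ continues from $\sigma$ to $\sigma 0$ en route to $\sigma^*$, forcing $c(\sigma)[t_\sigma] = 0$; in sub-case 1 I would argue that no delegation can deposit a counter onto $\sigma$ between $t_0$ and $t_1$, because any hypothetical source $\sigma^* = \sigma 0 1^m$ was re-initialized at $t_0$, making $w(\sigma^*)[t]$ exceed the stable value $r(\sigma)[t+1] = r(\sigma)[t_0]$ (using that $\sigma$ is not expansionary before being threatened at $t_1$).

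For (4), if $\tau$ is applied and threatened at $T > t_\sigma$ and neither $\sigma \prefix \tau$ nor $\sigma <_L \tau$ holds, then either $\tau = \sigma$ (impossible by Lemma~\ref{sdhajhsfdjashfdgjhkfdssdf}(1)) or $\tau \prefix \sigma$ or $\tau <_L \sigma$; in either of the latter cases, the end-of-stage initialization at $T$ initializes every $\zeta$ with $\tau \prefix \zeta$ or $\tau <_L \zeta$, and $\sigma$ lies in this set, contradicting $T > t_0$. For (3), the case $\tau <_L \sigma$ reduces to the case $\tau \prefix \sigma$ with $\tau 1 \prefixeq \sigma$ by considering the branching ancestor $\rho$ (with $\rho 0 \prefixeq \tau$, $\rho 1 \prefixeq \sigma$), which must itself be applied and expansionary at $T$ for the traversal to reach $\tau$ via $\rho 0$. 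In the reduced case, at the putative stage $T$ where $\tau$ is applied and expansionary, the construction forces $T$ to terminate either at some strategy $\rho_{\text{end}}$ extending $\tau 0$ (if $c(\tau)[T] = 0$) or at $\tau$ itself with counter label $\alpha \suffixeq \tau 0$ (if $c(\tau)[T] > 0$, by tracing the delegation chain). Since $\sigma$ extends $\tau 1$, both $\rho_{\text{end}}$ and $\alpha$ are $<_L \sigma$, so $\sigma$ would be initialized at $T$, a contradiction.

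Finally, for (5), I write $I := \{t \in J : t > t_\sigma\}$ and apply Corollary~\ref{jsjhfarjslkdgssfg} followed by Lemma~\ref{sdfjkasdjlkfgjkleknjjxvc} with $t_0 := t_\sigma + 1$. The main obstacle is the intermediate claim that $u(t) > t_\sigma$ for every $t \in I$, which Lemma~\ref{sdfjkasdjlkfgjkleknjjxvc} needs: given $t \in I$ with $u(t) = t' \leq t_\sigma$, such a stage can only arise in case (2) of $u$, with $t$ settling on some $\sigma^* \in 1^*$ that is expansionary with counter $\langle \alpha, k+1 \rangle$ at $t$. Parts (2) and (3) together force $c(\sigma^*)[t_\sigma + 1] = 0$, so the counter must have been (re-)set between $t_\sigma + 1$ and $t$ via delegation from some strictly longer $\sigma^{**}$ with $\sigma^* 0 \prefixeq \sigma^{**}$; iterating, each P-step of the delegation chain falls under the same analysis on a strictly longer strategy, so in finitely many steps the chain must terminate in an N-step (a threat) occurring strictly after $t_\sigma$, pinning the current label $\alpha$ in $c(\sigma^*)[t]$ to a strategy threatened after $t_\sigma$, hence $t' > t_\sigma$, contradiction. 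With $\min u(I) \geq t_\sigma + 1$ established, (4) confines each $\tau' = \truepath[t']$ (for $t' \in u(I)$) to the set of strategies initialized at $t_\sigma$, so $w(\tau')[t_\sigma + 1] = \nu(\tau') + t_\sigma + 2$; since $\lambda$ satisfies neither $\sigma \prefix \lambda$ nor $\sigma <_L \lambda$, summing yields the desired bound using $\sum_{\tau' \neq \lambda} 2^{-\nu(\tau')} = 1$, giving $x - x_{t_\sigma + 1} \leq 2^{-(t_\sigma + 1)}$.
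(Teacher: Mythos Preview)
Your overall plan matches the paper's, but there is a concrete error in your identification of the strategy active at the cut-off stage in the case $t_\sigma > t_1$. You claim that at $t_\sigma$ the executing strategy is some $\sigma^* = \sigma 0 1^m$, an \emph{extension} of $\sigma$. This is backwards: delegation in this construction always passes counters to \emph{prefixes} (the longest $\gamma$ with $\gamma 0 \prefixeq$ the current strategy). The chain initiated by the threat to $\sigma$ at $t_1$ therefore terminates at the shortest $\rho$ with $\rho 0 \prefixeq \sigma$, namely $\rho = 1^a$ where $\sigma = 1^a 0 \beta$; it is this $\rho$ that is applied and expansionary at $t_\sigma$ with $c(\rho)[t_\sigma] = \langle \sigma, 1 \rangle$. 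In particular, at stage $t_\sigma$ the traversal stops at $\rho$ (since $c(\rho)[t_\sigma]>0$), so $\rho 0$ and hence $\sigma$ are \emph{not} applied at $t_\sigma$. (You clearly do know which way delegation goes, since your argument for (5) gets it right; the description in (1) is simply wrong.)

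This misidentification happens not to damage the conclusion of (1), since the end-of-stage initialization depends only on the counter label $\alpha = \sigma$, not on which strategy holds the counter. But it does break your proof of (2). For $\tau$ with $\tau 0 \prefixeq \sigma$ you invoke Fact~\ref{fact:counters-on-expansionary-stages} ``at stage $t_\sigma$ (when $\sigma$ is applied)'', and for $\tau = \sigma$ in sub-case~2 you argue that ``the traversal at $t_\sigma$ continues from $\sigma$ to $\sigma 0$''. Both premises are false when $t_\sigma > t_1$. The correct treatment of the $\tau 0 \prefixeq \sigma$ case must distinguish whether $\tau$ actually participates in the delegation chain for $\sigma$ (in which case one tracks its counter down to $0$ by stage $t_\sigma$) or is skipped over by the initial delegation at $t_1$ (in which case no extension of $\tau 0$ is applied between $t_1$ and $t_\sigma$, so $\tau$'s counter is never touched and stays at $c(\tau)[t_1]=0$). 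The $\tau = \sigma$ case is handled uniformly by observing that $\sigma 0$ is never applied between $t_0 + 1$ and $t_\sigma$ (using $s(\sigma)=0$ before $t_1$ and the fact that $\sigma$ is not applied between $t_1$ and $t_\sigma$), so $c(\sigma)$ cannot become positive in that interval. Your arguments for (3), (4) and (5) are essentially sound --- and your iterative argument that $\min u(I) > t_\sigma$ is a clean way to organize (5) --- but since (5) relies on (2), that gap must be closed.
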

Intuitively speaking, items (1)--(4) show that the cut-off stage for a~$\sigma$ has the special property that all previously scheduled jumps have either been executed at this stage or have been aborted due to initialization. The only possible exception are jumps scheduled for strategies that are lexicographically smaller than~$\sigma$; however, those strategies will never be applied again in any case. Since there exist infinitely many strategies along the true path with the properties required in Definition~\ref{cut_off_def} there exist infinitely many cut-off stages. Then item~(5) implies that
$x$~is regainingly approximable.	
\begin{kor}
	The number $x$ is regainingly approximable.\qed
\end{kor}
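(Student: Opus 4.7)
The plan is to produce infinitely many cut-off stages and apply Lemma~\ref{cut_off_properties}(5) at each one. Since $(x_t)_t$ is already a computable non-decreasing sequence of rationals converging to $x$ by Proposition~\ref{satz:FBNBAA:prop:konvergenz}, it only remains to exhibit infinitely many indices $n$ with $x - x_n < 2^{-n}$, possibly after a cosmetic modification of the sequence.

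First, I would argue that infinitely many $\sigma \prefix \truepath$ satisfy the hypothesis of Definition~\ref{cut_off_def}, namely that $\varphi_{|\sigma|}$ is total and increasing. The set $E := \{e \in \IN : \varphi_e \text{ total and increasing}\}$ is infinite, because any fixed computable total increasing function has infinitely many indices in the standard enumeration $(\varphi_e)_e$. By Proposition~\ref{sdfjlw3ljsdadfgjrfasdfdfg}, the true path has exactly one prefix of each length, so infinitely many $\sigma \prefix \truepath$ satisfy $|\sigma| \in E$. For each such $\sigma$, Lemma~\ref{sdhajhsfdjashfdgjhkfdssdf}(1) provides the stage $t'_\sigma \geq t_0$ at which $\sigma$ is applied and threatened, making the cut-off stage $t_\sigma$ well-defined.

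Next, I would verify that distinct such $\sigma$'s give distinct cut-off stages. If $\sigma \neq \sigma'$ are two such prefixes of $\truepath$, then Fact~\ref{asdjltzhjkqwehk}, together with the fact that a threatened substage immediately ends its stage, implies that at most one strategy can be applied and threatened in any given stage; hence $t'_\sigma \neq t'_{\sigma'}$. Because $t_\sigma, t_{\sigma'} \in J$ with $u(t_\sigma) = t'_\sigma$ and $u(t_{\sigma'}) = t'_{\sigma'}$, it follows that $t_\sigma \neq t_{\sigma'}$. Thus the set of cut-off stages is infinite and in particular unbounded.

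Finally, applying Lemma~\ref{cut_off_properties}(5) at each such cut-off stage yields $x - x_{t_\sigma+1} \leq 2^{-(t_\sigma+1)}$. To match the strict inequality in the definition of regaining approximability, I would pass to the shifted sequence $y_n := x_{n+1}$, which is again computable, non-decreasing, and convergent to $x$; for each cut-off stage $t_\sigma$ we then obtain $x - y_{t_\sigma} = x - x_{t_\sigma+1} \leq 2^{-(t_\sigma+1)} < 2^{-t_\sigma}$, which holds at infinitely many indices. The only subtlety I anticipate is the distinctness argument for cut-off stages; everything else is a routine assembly of the facts established in the previous subsections.
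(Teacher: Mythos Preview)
Your proposal is correct and follows the same approach as the paper, which simply notes that infinitely many prefixes $\sigma \prefix \truepath$ meet the hypothesis of Definition~\ref{cut_off_def} and then invokes Lemma~\ref{cut_off_properties}(5). You are in fact more careful than the paper in two places it leaves implicit: the distinctness of the cut-off stages (your argument via $u(t_\sigma)=t'_\sigma$ and the observation that at most one strategy can be applied and threatened per stage is clean and correct) and the passage from the non-strict bound $x-x_{t_\sigma+1}\leq 2^{-(t_\sigma+1)}$ to the strict inequality required by the definition via the index shift $y_n:=x_{n+1}$.
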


We finish with the proof of Lemma~\ref{cut_off_properties}, completing 
the proof of the existence of a nearly computable regainingly approximable number that is not computable.
\goodbreak
	\begin{proof}[Proof of Lemma~\ref{cut_off_properties}] \;
		\begin{enumerate}
			\item If we have $t_\sigma = t_1$, then the desired jump at $t_1$ is made immediately. By construction, all strategies $\tau \in \SigmaS$ with $\sigma \prefix \tau$ or $\sigma <_L \tau$ are initialized at stage $t_\sigma$. Otherwise, we have $t_\sigma > t_1$ and the desired jump at $t_1$ is split and scheduled. 
			Remember that then, by construction, the last of these split jumps will ultimately be executed by the shortest $\rho \in \SigmaS$ with $\rho0 \prefixeq \sigma$ at stage $t_\sigma$. 
			In other more formal words, $\rho$ must be applied and expansionary at $t_\sigma$ with $c(\rho)[t_\sigma] = \left\langle \sigma, 1 \right\rangle$. Then again, by construction, all strategies $\tau \in \SigmaS$ with $\sigma \prefix \tau$ or $\sigma <_L \tau$ are initialized at stage $t_\sigma$.
						
			\item Let $\tau \in \SigmaS$; we distinguish three cases:
			\begin{itemize}
				\item Assume $\sigma <_L \tau 0$. This is equivalent to $\sigma <_L \tau$. Due to (1), $\tau$ is initialized at $t_\sigma$, so we have $c(\sigma)[t_\sigma+1] = 0$.
				\item Assume $\sigma \prefix \tau 0$. This is equivalent to $\sigma \prefixeq \tau$. If $\sigma \prefix \tau$, then, due to~(1), $\tau$ is initialized at $t_\sigma$, so we have $c(\sigma)[t_\sigma+1] = 0$. 
				
				Otherwise, we have $\sigma = \tau$. By choice of $t_0$ we have $c(\sigma)[t_0+1]=0$. We claim that, from stage $t_0+1$ onward up to stage $t_\sigma$, strategy $\sigma0$ is never applied; from this it follows in particular that for all stages~$t$ with $t_0 < t \leq t_\sigma+1$ we have $c(\sigma)[t]=0$, and we are done.
				To see that the claim is true, note that 
				\begin{itemize}
					\item in stages $t_0+1$ up to $t_1-1$, by definition, $\sigma$ cannot be expansionary, thus  $\sigma0$ cannot be applied by construction;
					\item in stage $t_1$, by assumption, $\sigma$ is applied and threatened, thus  $\sigma0$ cannot be applied by construction;
					\item if we are in the case where $t_\sigma > t_1$ then, by construction, in stages $t_1+1$ up to stage $t_\sigma$ we have that  $\sigma$ and thus in particular $\sigma0$ are never applied.
				\end{itemize}

			\goodbreak
			
				\item Assume $\tau 0 \prefixeq \sigma$. By Fact~\ref{fact:counters-on-expansionary-stages}, we must have $c(\tau)[t_1] = 0$. 
				If $t_{\sigma} = t_1$, then $\sigma$ is applied and threatened at $t_\sigma$ and the jump resulting from this threat is executed immediately; thus $\tau$'s counter remains unchanged and so $c(\tau)[t_{\sigma} + 1] = c(\tau)[t_1] = 0$.
				
				\goodbreak
				
				So assume $t_{\sigma} > t_1$, meaning that the jump resulting from the threat against $\sigma$ at stage $t_1$ is split and scheduled for later execution.	
				If there is a stage $t_2$ with $t_1 \leq t_2 \leq t_\sigma$ at which $c(\tau)[t_2]=\langle \sigma, k \rangle$ for some $k\in \IN$ (meaning that $\tau$ plays a part in executing the split jumps for~$\sigma$), then by construction there must also be a latest stage $t_3$ with $t_2 \leq t_3\leq t_\sigma$ where $c(\tau)[t_3]=\langle \sigma, 1 \rangle$ und $c(\tau)[t_3+1]=0$ (after which $\tau$ has done its part for $\sigma$). By construction, between $t_3+1$ and~$t_\sigma$ no extensions of $\tau0$ can be applied (because all remaining stages from~$t_3+1$ until~$t_\sigma$, if any, are used by strict prefixes of $\tau$ to execute the split jumps that were delegated upwards by $\tau$), and therefore we have  \[c(\tau)[t_{\sigma} + 1] = c(\tau)[t_3+1]=0.\]
				If, on the other hand, there is no stage $t_2$ as above, then $\tau$ was ``skipped over'' by $\sigma$ in the sense that when $\sigma$ was threatened at stage $t_1$, there existed a prefix $\psi$ with $\psi0 \prefixeq \tau0 \prefixeq \sigma$ and some $k\in \IN$ such that $c(\psi)[t_1+1]$ was set to $\langle \sigma, k\rangle$ directly (meaning that $\tau$ plays \textit{no} part in executing the split jumps for~$\sigma$). In this case, none of $\tau0$'s extensions can ever be applied in stages $t_1$ up to $t_\sigma$. Thus they can never set $\tau$'s counter to any non-zero value, and we have $c(\tau)[t_{\sigma} + 1] = c(\tau)[t_1] = 0$.
			\end{itemize}
			In summary, the implication $c(\tau)[t_\sigma + 1] > 0 \Rightarrow  \tau0 <_L \sigma$ holds.
						
			\item Let $\tau \in \SigmaS$ with $\tau 0 <_L \sigma$. We claim that if
			there were a stage $t > t_\sigma$ at which $\tau$ is applied and expansionary then $\sigma$ would be initialized at $t$, and since $t >t_0$ this contradicts the assumptions. To see this, note that there are two possible behaviours at stage $t$ when $\tau$ is applied and expansionary: 
			\begin{itemize}
				\item If $\tau0$ is applied in the next substage of stage $t$, then that stage must settle on some extention $\rho$ of $\tau0$. Thus, $\rho <_L \sigma$, and $\sigma$ is initialized at the end of stage $t$;
				\item if, on the other hand, we have $c(\tau)[t]=\langle \alpha , k \rangle$ for some $\alpha \suffixeq \tau0$ and some $k \in \IN$, then  stage $t$ ends with the initialization of every strategy to the right of $\alpha$, and this includes $\sigma$.
			\end{itemize}				

			\item Since $\sigma$ is not initialized after $t_\sigma$, no $\tau <_L \sigma$ can be applied and no $\tau \prefix \sigma$ can be applied and threatened after $t_\sigma$. Due to $t_\sigma \geq t_1$, $\sigma$ is not applied and threatened after $t_\sigma$ either. Thus, if some strategy $\tau$ is applied and threatened at some stage after $t_\sigma$, we must have $\sigma \prefix \tau$ or $\sigma <_L \tau$.
			\item Define $R := \{\tau \in \SigmaS \mid \sigma \prefix \tau \text{ or } \sigma <_L \tau \}$. 
			According to~(2), (3) and~(4), all jumps that are made after stage $t_\sigma$ must be due to fresh threats to strategies $\tau \in R$ when they are applied; that is, formally speaking, if we write $I := \{t \in J \mid t \geq t_\sigma + 1\}$ then we have $t_\sigma+1 \leq\min u(I)$.
			
			By~(1), all $\tau \in R$ are initialized at $t_\sigma$, thus ${w(\tau)[t_\sigma+1] = \nu(\tau) + t_\sigma + 2}$. As~$\lambda \notin R$, we have $\nu(\tau) \geq 1$ for all $\tau \in R$. 
			Thus, using
			Corollary~\ref{jsjhfarjslkdgssfg} with~$t_1=t_\sigma + 1$ and $t_2 \to \infty$ as well as Lemma~\ref{sdfjkasdjlkfgjkleknjjxvc}, we  obtain
	\begin{align*}
		x - x_{t_\sigma + 1}
		&= \sum_{t \in I} \; (x_{t+1} - x_t) \\
		&\leq \sum_{\mathclap{t' \in u(I)}} \; 2^{-w(\truepath[t'])[t']} \\
	&= \sum_{\tau \in R} \;\;\sum_{t' \in u(I)\colon \tau = \truepath[t']} \!\!\!\!\!\!\!\!\!\! 2^{-w(\tau)[t']} \\
	&\leq \sum_{\tau \in R} \;2^{-w(\tau)[t_\sigma + 1]+1} \\
	&= \sum_{\tau \in R} \;2^{-(\nu(\tau) + t_\sigma + 1)} \\
	&\leq 2^{-(t_\sigma+1)}.\qedhere
\end{align*}\end{enumerate}\end{proof}\end{proof}


\section{Incomparability}

In this section we prove that  the regainingly approximable and the nearly computable numbers are incomparable within the left-computable numbers. One of the separations is provided by the following proposition.
\begin{prop}\label{sadjasjhkrtjhqwejhxvbbmasdgh}
	There exists a regainingly approximable number which is not nearly computable.
\end{prop}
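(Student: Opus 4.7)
The plan is to construct $x$ directly via an infinite-injury priority argument on a binary tree of strategies, in the style of the proof of Theorem~\ref{satz:fast-berechenbar-aufholend-approximierbar}. Specifically, I would build a computable non-decreasing sequence $(x_s)_s$ of rationals with limit $x$ satisfying two groups of requirements: a positive requirement $\mathcal{P}$ ensuring that $x - x_s < 2^{-s}$ for infinitely many $s$ (so that $x$ is regainingly approximable), and an infinite list of negative requirements $\mathcal{N}_e$ (for $e \in \IN$), each asserting that if $\varphi_e$ is total and increasing, then there exist $m, n \in \IN$ with $n \geq \varphi_e(m)$ and $x_{n+1} - x_n \geq 2^{-m}$.

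To see that meeting all $\mathcal{N}_e$ suffices to make $x$ not nearly computable, I would first establish or cite from \cite{HJ2023} the following reduction: for a left-computable number, being nearly computable is equivalent to the existence of a computable non-decreasing approximation of it that is nearly computably convergent. This equivalence follows by effectively interleaving any two computable non-decreasing approximations of the same left-computable number, which transfers a computable modulus for subsequence differences from one to the other. Given this reduction, it suffices to arrange that our $(x_s)_s$ is not nearly computably convergent; specializing the definition to $s = \id$, it suffices to arrange that $(x_{s+1} - x_s)_s$ has no computable modulus of convergence to $0$, which is precisely what the $\mathcal{N}_e$'s guarantee.

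The construction assigns the strategy $\sigma$ at level $e$ to $\mathcal{N}_e$. Such a strategy picks an adaptive witness $m_\sigma$ based on the observed behavior of $\varphi_e$ and schedules a jump of size $2^{-m_\sigma}$ at some stage $\geq \varphi_e(m_\sigma)$; higher-priority strategies impose restraints bounding the jump sizes permitted below them, so that the regaining budget is not exhausted. At each regaining stage, lower-priority strategies are reinitialized so their future contributions remain small enough to respect $\mathcal{P}$. The main obstacle is coordinating the infinitely many jumps demanded by the $\mathcal{N}_e$'s with $\mathcal{P}$; this is resolved by adapting $m_\sigma$ so that its jump size is small enough to be absorbed within a later regaining budget while still exceeding $2^{-m_\sigma}$ as required to defeat $\varphi_e$ as a modulus. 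Since we do not additionally need to produce nearly computable convergence (as in Theorem~\ref{satz:fast-berechenbar-aufholend-approximierbar}), the construction is a genuine simplification of that one, and the verification proceeds along analogous lines—tracking a true path, analyzing cut-off stages for the regaining property, and verifying the negative requirements via the now-standard argument that each strategy on the true path needs only finitely many actions after its last initialization.
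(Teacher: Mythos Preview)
Your plan is workable but takes a much heavier route than the paper. The paper's proof is three lines: by Downey--Hirschfeldt--LaForte~\cite[Theorem~2.15]{DHL01}, any strongly left-computable number that is nearly computable is already computable; and by Hertling--H\"olzl--Janicki~\cite{HHJ2023} there exists a regainingly approximable number that is strongly left-computable but not computable. Such a number is therefore automatically not nearly computable, and no construction is needed.

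Your direct approach is not wrong in outline. The reduction you describe (it suffices that the specific sequence $(x_s)_s$ you build has $(x_{s+1}-x_s)_s$ with no computable modulus) is exactly Proposition~\ref{prop:characterisierung-lnc} of Hertling--Janicki, and your negative requirements $\mathcal{N}_e$ correctly express ``$\varphi_e$ is not a modulus of convergence of $(x_{s+1}-x_s)_s$''. Two remarks, though. First, the machinery you invoke is overkill: these requirements are not in genuine conflict in the infinite-injury sense --- each $\mathcal{N}_e$ needs to act only once (with a sufficiently large fresh witness $m_\sigma$), and the regaining requirement $\mathcal{P}$ can be met at infinitely many stages simply by reinitializing lower-priority strategies after each regaining stage, so a straightforward finite-injury construction suffices; there is no need for the tree-of-strategies apparatus, restraints, jump-splitting, or cut-off-stage analysis borrowed from Theorem~\ref{satz:fast-berechenbar-aufholend-approximierbar}. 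Second, even that simplified construction is unnecessary given the citation argument above. What your route buys is self-containment (modulo the characterization from~\cite{HJ2023}); what the paper's route buys is brevity and the observation that the witness can be taken strongly left-computable.
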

\begin{proof}
	A result of Downey, Hirschfeldt, and LaForte~\cite[Theorem~2.15]{DHL01} implies that every strongly left-computable number that is nearly computable is, in fact, computable.
	But Hertling, Hölzl, and Janicki~\cite{HHJ2023} established the existence of regainingly approximable numbers that are strongly left-computable without being computable. Such a number can therefore not be nearly computable.
\end{proof}

The other separation is the second main result of this article. 
\begin{theorem}\label{satz:FBNAA}
	There exists a left-computable number which is nearly computable but not regainingly approximable.
\end{theorem}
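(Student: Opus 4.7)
The plan is to adapt the infinite-injury priority construction of Theorem~\ref{satz:fast-berechenbar-aufholend-approximierbar}. The key preliminary observation is the reduction: $x$ is not regainingly approximable if and only if for every computable increasing $\varphi_e$, one has $x - x_{\varphi_e(n)} \geq 2^{-n}$ for all but finitely many $n$. Indeed, ``$\Leftarrow$'' is immediate since $(x_{\varphi_e(n)})_n$ is itself a valid candidate sequence; for ``$\Rightarrow$'', given an offending computable non-decreasing $(y_n)_n \to x$ with $x - y_n < 2^{-n}$ infinitely often, the function $\varphi(n) := \min\{t : x_t \geq y_n\}$ is computable and non-decreasing, and passing to a strictly increasing refinement yields a subsequence of $(x_t)_t$ with $x - x_{\varphi(n)} \leq x - y_n < 2^{-n}$ infinitely often.

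Accordingly, I keep the positive requirements $\mathcal{P}_e$ of Theorem~\ref{satz:fast-berechenbar-aufholend-approximierbar} unchanged and replace the negative ones by the cofinite variant
\[
  \mathcal{N}_e:\; \varphi_e \text{ total and increasing} \;\Rightarrow\; (\exists n_e)(\forall n \geq n_e)\; x - x_{\varphi_e(n)} \geq 2^{-n}.
\]
The tree of strategies, the parameters (witness $w$, restraint $r$, counter $c$), the expansionary and threat notions, the initialization conventions, and the restraint-triggered splitting of jumps via the counter mechanism are all inherited from Theorem~\ref{satz:fast-berechenbar-aufholend-approximierbar}. The essential new feature is that a strategy $\sigma$ with $|\sigma|=e$ now acts infinitely often instead of just once: its witness $w(\sigma)[t]$ represents the next index to be processed. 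Whenever $\varphi_e(w(\sigma)[t])$ has halted by stage $t$, either $x_t - x_{\varphi_e(w)} \geq 2^{-w}$ already holds (and the witness is simply incremented at the next stage), or $\sigma$ is threatened and schedules a jump of size at most $2^{-w}$ that ultimately forces $x \geq x_{\varphi_e(w)} + 2^{-w}$, after which the witness advances. The jumps originating from a single $\sigma$ therefore form a geometric tail with total mass at most $2^{-w_0(\sigma)+1}$, so $(x_t)_t$ still converges and $x$ is left-computable.

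The verification transfers largely from Theorem~\ref{satz:fast-berechenbar-aufholend-approximierbar}. The analysis of $\mathcal{P}_e$, the sum-bounding Lemmata~\ref{satz:FBNBAA:lem:spruenge-expansionary} and~\ref{satz:FBNBAA:lem:spruenge-threatened}, Facts~\ref{remark:spruenge-expansionary-weak} and~\ref{remark:spruenge-threatened-weak}, and the decisive Lemma~\ref{satz:FBNBAA:lem08} all go through with essentially the same proofs, because the restraint machinery is unchanged and because each strategy still maintains at most one scheduled cascade at a time. The only substantively new argument is for $\mathcal{N}_e$: I must show that for $\sigma \prefix \mathcal{T}$ with $|\sigma| = e$ and $\varphi_e$ total and increasing, the witness $w(\sigma)[t]$ becomes unbounded in $t$ after $\sigma$'s final initialization; from this every sufficiently large $n$ is individually handled and one obtains $x - x_{\varphi_e(n)} \geq 2^{-n}$ as required.

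The hard part will be proving this unboundedness, i.e., showing that $\sigma$ cannot get permanently stuck at any single witness waiting for its scheduled cascade to complete. I argue this by combining the true-path lemma (every $\gamma \prefixeq \sigma$ is applied infinitely often after $\sigma$'s final initialization, as in Proposition~\ref{sdfjlw3ljsdadfgjrfasdfdfg}) with the fact that every $\gamma$ with $\gamma 0 \prefixeq \sigma$ is expansionary infinitely often (as in Lemma~\ref{sdhajhsfdjashfdgjhkfdssdf}); this suffices because scheduled cascades are by construction always delegated to precisely such $\gamma$'s. Since each cascade for a single witness consists of finitely many counter decrements, it completes within finitely many applications of higher-priority strategies, after which the witness advances and $\sigma$ proceeds to the next index. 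Iterating, $\sigma$ exhausts all $n \geq n_e$ for some threshold $n_e$, which establishes $\mathcal{N}_e$; the initial reduction then gives that $x$ is not regainingly approximable, completing the proof.
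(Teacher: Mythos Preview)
Your high-level plan and the reduction to cofinite negative requirements are exactly what the paper does. But two concrete modifications to the construction are missing, and without them the argument breaks.

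First, you say the initialization conventions are inherited from Theorem~\ref{satz:fast-berechenbar-aufholend-approximierbar}. In that construction, when $\sigma$ is applied and threatened, all $\tau$ with $\sigma \prefix \tau$ are initialized. But now each $\sigma \prefix \truepath$ with $|\sigma| \in S$ is threatened infinitely often, so every proper extension of $\sigma$ would be initialized infinitely often, and the true-path existence argument (Proposition~\ref{sdfjlw3ljsdadfgjrfasdfdfg}) collapses: there is no $\sigma_{l+1}$ extending $\sigma_l$ that is initialized only finitely often. The paper repairs this by \emph{not} initializing proper extensions of $\sigma$ when $\sigma$ is threatened (only strategies strictly to the right are initialized) and by adding a new parameter, a \emph{pause flag} $p(\sigma)$, guaranteeing that $\sigma$ cannot be threatened at two consecutive stages where it is applied (Fact~\ref{sdfjhasdfkjldfghjkasjlersdvsdf}). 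This is what ensures lower-priority strategies along the true path receive infinitely many stages to act, and it is precisely the mechanism your sketch lacks.

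Second, your claim that Lemma~\ref{satz:FBNBAA:lem08} ``goes through with essentially the same proof'' is incorrect. In Theorem~\ref{satz:fast-berechenbar-aufholend-approximierbar}, once past $\sigma$'s last initialization, no prefix of $\sigma$ is ever threatened again (the satisfaction flag is permanent), so between consecutive expansionary stages of $\sigma$ the only contributions come from $\sigma$ itself and strategies to its right. Here, prefixes $\tau \prefixeq \sigma$ with $|\tau| \in S$ continue to be threatened and schedule fresh jumps between any two expansionary stages of $\sigma$. The paper's replacement (Lemma~\ref{satz:FBNAA:lem09}) therefore has the weaker bound
\[
x_{t_2} - x_{t_1} \leq 2^{-r(\sigma)[t_1]+1} + \sum_{\substack{\tau \prefixeq \sigma \\ |\tau| \in S}} 2^{-w(\tau)[t_1]+1},
\]
and the modulus of convergence in the verification of $\mathcal{P}_e$ must wait until all of these witness values are large, not just until $r(\sigma)$ is large. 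Your proposal does not account for this extra term.
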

While the proof of Theorem~\ref{satz:FBNAA} will be structurally similar to that of Theorem~\ref{satz:fast-berechenbar-aufholend-approximierbar}, there are important differences that, while seemingly subtle, vastly impact the dynamics of the construction. In these introductory explanations we will thus focus on the differences between both constructions, but will then proceed with the construction and verification in full detail.
We again prove the theorem by constructing a computable non-decreasing sequence of rational numbers $(x_t)_t$ which converges nearly computably to some $x$.
To guarantee that $x$ is not regainingly approximable, we can use the following characterization.
\begin{lem}[Hertling, Hölzl, Janicki~\cite{HHJ2023}]\label{prop:charakterisierung-aa}
	The following statements are equivalent for a left-computable number~$x$:
	\begin{enumerate}
		\item $x$ is regainingly approximable.
		\item For every computable non-decreasing sequence of rational numbers $(x_n)_n$ converging to $x$ there exists a computable increasing function $s \colon \IN \to \IN$ with $x - x_{s(n)} < 2^{-n}$ for infinitely many $n \in \IN$.
	\end{enumerate} 
\end{lem}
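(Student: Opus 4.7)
The plan is to prove the two directions separately by sub-sequencing arguments, using the witness of one condition to computably thin the approximation in the other. The key observation is that regaining approximability is intrinsic to $x$ rather than tied to a particular approximation: once one witness exists, every computable non-decreasing approximation can be thinned to exhibit the regaining behaviour.

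For the direction (1)$\Rightarrow$(2), I would fix a witness $(y_n)_n$ of the regaining approximability of $x$, and given an arbitrary computable non-decreasing rational sequence $(x_n)_n$ converging to $x$, construct $s$ recursively by $s(0) := \mu m.\, x_m \geq y_0$ and $s(n+1) := \max(s(n)+1,\; \mu m.\, x_m \geq y_{n+1})$. Since $y_n \leq x = \lim_m x_m$, the $\mu$-operator halts at each step, so $s$ is total and computable; the $\max$-padding makes $s$ strictly increasing. The monotonicity of $(x_m)_m$ then yields $x_{s(n)} \geq y_n$, hence $x - x_{s(n)} \leq x - y_n$, so every $n$ witnessing the regaining property of $(y_n)_n$ also witnesses $x - x_{s(n)} < 2^{-n}$; as there are infinitely many such $n$ by assumption, (2) is established.

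For the direction (2)$\Rightarrow$(1), I would use left-computability of $x$ to pick some computable non-decreasing rational sequence $(x_n)_n$ with $\lim x_n = x$, apply (2) to this particular sequence to obtain a computable strictly increasing $s$, and set $y_n := x_{s(n)}$. Then $(y_n)_n$ is computable and non-decreasing (since $(x_n)_n$ is non-decreasing and $s$ is strictly increasing), and $s(n) \to \infty$ combined with $x_n \to x$ forces $y_n \to x$. The choice of $s$ directly gives $x - y_n < 2^{-n}$ for infinitely many $n$, which is precisely the defining property of regaining approximability.

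I anticipate no substantive obstacle. The only delicate point is ensuring strict monotonicity of $s$ in the first direction, which is handled by the $\max(s(n)+1,\cdot)$ padding; without it, the recipe only gives non-decreasing $s$, and one then has to appeal to the equivalence (already used in the paper) between non-decreasing and strictly increasing computable sequences converging to a left-computable number. Either route works, but the direct padding argument is the cleanest.
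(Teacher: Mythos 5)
Your direction (2)$\Rightarrow$(1) is correct. The direction (1)$\Rightarrow$(2), however, contains a genuine termination gap: the assertion that ``$y_n \leq x = \lim_m x_m$ implies $\mu m.\,x_m \geq y_n$ halts'' is false. The definition of regaining approximability only requires the witness $(y_n)_n$ to be \emph{non-decreasing}, so it may become eventually constant and hit $x$ exactly (for instance $x$ rational and $y_n \equiv x$, which satisfies $x - y_n = 0 < 2^{-n}$ for all $n$); meanwhile the given $(x_m)_m$ of item (2) is arbitrary and may satisfy $x_m < x$ for every $m$. In that situation no $m$ with $x_m \geq y_n$ exists, the unbounded search diverges, and $s$ is not total. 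Knowing $y_n \leq x$ only shows the target is not \emph{above} the limit, not that it is eventually \emph{reached} by a sequence approaching strictly from below.

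The repair is small and preserves your core strategy of transferring the regaining witness onto the given approximation by a search. Search with strict slack: replace the target $x_m \geq y_n$ by $x_m > y_{n+1} - 2^{-(n+1)}$, keeping your $\max$-padding to enforce strict monotonicity of $s$. This search always terminates, since $y_{n+1} - 2^{-(n+1)} < y_{n+1} \leq x = \lim_m x_m$; and whenever $x - y_{n+1} < 2^{-(n+1)}$ it yields $x - x_{s(n)} < (x - y_{n+1}) + 2^{-(n+1)} < 2^{-n}$, so the ``infinitely many'' property is inherited after an index shift. Equivalently, one may first pre-process the witness to the strictly increasing sequence $y'_n := y_{n+1} - 2^{-(2n+2)}$, which remains computable, stays strictly below $x$, and still satisfies $x - y'_n < 2^{-n}$ for infinitely many $n$; with this witness your original search halts as claimed.
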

It is thus sufficient to ensure that for the sequence $(x_t)_t$ that we are constructing there is no computable function $s$ as in the second item. Therefore, for every~$e \in \IN$, we define the following negative requirement:
\begin{equation*}
	\mathcal{N}_e \colon \varphi_e \text{ total and increasing } \Rightarrow \left(\exists m \in \IN\right) \left(\forall n \geq m\right) \;  x - x_{\varphi_e(n)} \geq 2^{-n}
\end{equation*}
Notice how the quantifiers differ from those in the negative requirements used to prove Theorem~\ref{satz:fast-berechenbar-aufholend-approximierbar}; here, a negative requirement can be threatened infinitely often, which will need to be reflected in the details of the construction below.

To ensure that $x$ is nearly computable, we use the same positive requirements as before; namely, for every $e \in \IN$:
\begin{equation*}
	\mathcal{P}_e \colon \varphi_e \text{ total and increasing }  \Rightarrow \left(x_{\varphi_e(t+1)} - x_{\varphi_e(t)}\right)_t
	\text{ converges computably to } 0
\end{equation*}
As in the last proof, the two types of requirements seemingly contradict each other. We will again resolve this apparent conflict by splitting large jumps into small jumps that will be scheduled and delayed for later execution.

We point out that, despite the more demanding negative requirements used here, the proof of this theorem is somewhat easier than that of Theorem~\ref{satz:fast-berechenbar-aufholend-approximierbar}. This is because in that proof, unlike here, in order to establish regaining approximability, much effort had to be devoted to ensuring  the existence of the special cut-off stages.

\smallskip

We again work with an infinite injury priority construction on an infinite binary tree of strategies~$\sigma \in \Sigma^*$ that each are responsible for satisfying both requirements $\mathcal{N}_{\left|\sigma\right|}$ and $\mathcal{P}_{\left|\sigma\right|}$, and define 
the \emph{parameters} of such a strategy as the following four functions from $\SigmaS \times \IN$ to $\IN$, defined for every $\sigma \in \Sigma^*$ and every~$t\in\IN$:
\begin{itemize}
	\item a \textit{counter} $c(\sigma)[t]$
	\item a \emph{pause flag} $p(\sigma)[t]$
	\item a \textit{restraint} $r(\sigma)[t]$
	\item a \textit{witness} $w(\sigma)[t]$		
\end{itemize}
As before, by construction, $w$ and $r$ will be non-decreasing in $t$ for each $\sigma$. In the construction, some negative requirements will require attention infinitely often. The pause flag will be used to prevent  high priority negative requirements from precluding other requirements from ever receiving attention.

\smallskip

After these informal remarks, we proceed with the full proof of the theorem.	
\begin{proof}[Proof of Theorem~\ref{satz:FBNAA}]
	We recursively define a computable non-decreasing sequence of rational numbers $(x_t)_t$ starting with $x_0 := 0$. At the same time, we also recursively define four functions ${c, p, r, w \colon \SigmaS \times \IN \to \IN}$
	starting with
\begin{align*}
	c(\sigma)[0]	&:= 0, \\
	p(\sigma)[0]	&:= 0, \\
	r(\sigma)[0]	&:= 0, \\
	w(\sigma)[0]	&:= \nu(\sigma),
\end{align*}
for all $\sigma \in \SigmaS$.
As in the proof of Theorem~\ref{satz:fast-berechenbar-aufholend-approximierbar}, the construction proceeds in stages consisting of at most $t + 1$ substages. 
At the end of each stage $t\in \IN$, we will fix a finite binary string $\truepath[t]$ and say that stage~$t$ {\em settles on}~$\truepath[t]$. Again, this string is determined in the substages as follows: In the first substage of each stage we {\em apply} 
strategy~$\lambda$; and in each substage when some $\sigma$ is applied, we can choose whether we want to {\em apply} $\sigma0$ or $\sigma1$ in the following substage or whether we want to let the current stage end right after the current substage. 
The strategy applied in the last substage of a stage is then the string that that stages settles on.

We define the same terms as before, but with the different conditions needed here:
\begin{itemize}
	\item  \emph{Initializing} a strategy~${\tau \in \SigmaS}$ at a stages $t$ means setting
\begin{align*}
	c(\tau)[t+1] &:= 0, \\
	w(\tau)[t+1] &:= \nu(\tau) + t + 2.
\end{align*}
	\item We say that a strategy $\sigma\in \SigmaS$ is {\em threatened at stage $t$} if the  conditions 
	\begin{itemize}
	\item $p(\sigma)[t] = 0$,
	\item $\ell(e)[t] \geq w(\sigma)[t]$,
	\item $x_{t} - x_{\varphi_{e}(\ell(e)[t])} < 2^{-w(\sigma)[t]}$
\end{itemize}
	are satisfied. As before, to defeat this threat, we would like to react by making some large jump.
	
	\item We say that a strategy $\sigma\in \SigmaS$ is \emph{expansionary at stage $t$} if the conditions
	\begin{itemize}
		\item $\ell(e)[t]\geq 0$ where $e:=|\sigma|$,
		\item $x_{t} - x_{\varphi_{e}(\ell(e)[t])} < 2^{-r(\sigma)[t]}$
	\end{itemize}
	are satisfied. As before this means that $\varphi_{e}$ has made some progress towards looking like a total, increasing function. 
\end{itemize}

We complete the description of the construction by giving the details of what we do in a substage of stage~$t$ when a strategy $\sigma \in \SigmaS$ is applied; note the significantly different initialization behaviour compared with the proof of Theorem~\ref{satz:fast-berechenbar-aufholend-approximierbar}:
\begin{enumerate}
	\item Let $e := \left|\sigma\right|$. If we have $e = t$, then this is the last substage, we set
	\begin{equation*}
		x_{t+1} := x_t,
	\end{equation*}
	we initialize all strategies $\tau \in \SigmaS$ with $\sigma <_L \tau$, and terminate stage~$t$. Otherwise we continue with~(2).	

	\item \emph{Negative requirement:}
	If $\sigma$ is not threatened at stage $t$, set
	\[p(\sigma)[t+1] := 0,\] meaning in particular that if strategy $\sigma$ was paused before, it is now unpaused again. Then jump directly to~(3).
	
	\smallskip
		
 Otherwise, that is if $\sigma$ is threatened at stage $t$, check if there is a $\gamma \in \Sigma^*$ with $\gamma 0 \sqsubseteq \sigma$ and $r(\gamma)[t+1] \geq w(\sigma)[t]$. If not, then let
	\begin{align*}
		x_{t+1} &:= x_{t} + 2^{-w(\sigma)[t]}.
	\end{align*}
If yes, then choose the longest such $\gamma$ and set
	\begin{align*}
		c(\gamma)[t+1] 	&:= \left\langle \sigma, 2^{r(\gamma)[t+1] - w(\sigma)[t]}\right\rangle, \\
		x_{t+1} &:= x_t.
	\end{align*}
	In either case, this will be the last substage of this stage. We pause strategy~$\sigma$ by setting
	\begin{align*}
		p(\sigma)[t+1] &:= 1;		
	\end{align*}
	this prevents $\sigma$ from being threatened in the next stage and thus gives other, lower priority strategies a chance to act. We also set 
\begin{align*}
	w(\sigma)[t+1] &:= w(\sigma)[t] + 1
\end{align*}
	which, unlike in the proof of Theorem~\ref{satz:fast-berechenbar-aufholend-approximierbar}, is necessary due to the infinitary nature of the negative requirements. We then initialize all strategies $\tau \in \SigmaS$ with $\sigma <_L \tau$, and terminate stage $t$.
	
	\item \emph{Positive requirement:}
	If $\sigma$ is not expansionary at stage $t$, continue with the next substage $e+1$ applying~$\sigma 1$. As before, this
 means that we currently have no reason to believe that $\varphi_e$ is a total and increasing function.
		
		Otherwise, if $\sigma$ is expansionary at stage $t$, we check if $c(\sigma)[t] = 0$. If this is the case, then we set
	\begin{equation*}
		r(\sigma)[t+1] := r(\sigma)[t] + 1
	\end{equation*} 
	and continue with the next substage $e+1$ applying~$\sigma 0$. As before, the ``0'' documents that we
	have made our restraint on future jump sizes stricter because we currently consider $\varphi_e$ a viable candidate for being a total and increasing function.
	
	Otherwise there exist a strategy $\alpha \in \SigmaS$ and a number $k \in \IN$ with $c(\sigma)[t] = \left\langle \alpha, k+1\right\rangle$, meaning that we still have scheduled jumps to execute. Thus check if there exists a $\gamma \in \Sigma^*$ with $\gamma 0 \sqsubseteq \sigma$. If not, then set 
	\begin{equation*}
		x_{t+1} := x_{t} + 2^{-r(\sigma)[t]}.
	\end{equation*}
	If yes, choose the longest such $\gamma$. The argument used to prove Fact~\ref{restrait_monotony} works exactly in the same way for the present construction; thus we have $r(\gamma)[t+1] \geq r(\sigma)[t]$ and we can set
	\begin{align*}
		c(\gamma)[t+1] &:= \left\langle \alpha, 2^{r(\gamma)[t+1] - r(\sigma)[t]}\right\rangle, \\
		x_{t+1} &:= x_t.
	\end{align*}
	In either case, this will be the last substage of this stage. As one of the scheduled jumps has been taken care of, we can decrement the corresponding counter by setting
	\begin{equation*}
			c(\sigma)[t+1] := \begin{cases}
				0 &\text{if $k = 0$,} \\
				\left\langle \alpha, k\right\rangle &\text{otherwise}.
			\end{cases}
	\end{equation*}
	Then we initialize all strategies $\tau \in \SigmaS$ with $\sigma0 <_L \tau$, and terminate stage~$t$.
\end{enumerate}

	If some of the parameters $c(\sigma)[t+1]$, $p(\sigma)[t+1]$, $r(\sigma)[t+1]$, or $w(\sigma)[t+1]$ have not been explicitly set to some new value during stage $t$, then we set them to preserve their last respective values $c(\sigma)[t]$, $p(\sigma)[t]$, $r(\sigma)[t]$, or $w(\sigma)[t]$. 

\medskip

We proceed with the verification. The following properties of the restraint and the witness functions follow directly from their definitions; we omit the proofs.
\goodbreak
\begin{fact}\label{satz:FBNAA:lem04}
For all $\sigma, \tau \in \Sigma^*$ and all $t \in \IN$ we have
\begin{itemize}
	\item \makebox[\widthof{$w(\sigma)[t]$}][r]{$r(\sigma)[t]$} $\leq t$,
	\item \makebox[\widthof{$w(\sigma)[t]$}][r]{$r(\sigma)[t]$} $\leq r(\sigma)[t+1]$,
	\item $w(\sigma)[t]$ $\leq w(\sigma)[t+1]$.\qed
\end{itemize}
\end{fact}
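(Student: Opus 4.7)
The plan is a routine induction on the stage $t$, after identifying every rule in the construction that can modify $r$ or~$w$.

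For the restraint, $r(\sigma)[\cdot]$ only ever changes value inside the positive-requirement clause~(3), when $\sigma$ is applied and expansionary with $c(\sigma)[t]=0$; and then the new value is exactly $r(\sigma)[t]+1$. Thus monotonicity $r(\sigma)[t]\leq r(\sigma)[t+1]$ is immediate, and since $r(\sigma)[0]=0$ and each stage contributes at most one unit, the bound $r(\sigma)[t]\leq t$ follows by a trivial induction on~$t$.

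For the witness, two kinds of updates can occur at a stage $t$: (a)~during the handling of a threat to~$\sigma$ in step~(2), we have $w(\sigma)[t+1]=w(\sigma)[t]+1$; and (b)~whenever $\sigma$ is initialized, we set $w(\sigma)[t+1]=\nu(\sigma)+t+2$ in absolute terms, with no reference to $w(\sigma)[t]$. Case~(a) is manifestly monotone, but case~(b) is not obviously so. I would therefore first establish the auxiliary bound $w(\sigma)[t]\leq \nu(\sigma)+t+1$ by induction on $t$: the base case $w(\sigma)[0]=\nu(\sigma)$ is clear, and each of the three possible actions on $w(\sigma)$ at a stage $t$ (unchanged, incremented by~$1$, or reset to $\nu(\sigma)+t+2$) preserves the bound at~$t+1$ by a one-line check. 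With this auxiliary bound in hand, case~(b) yields $w(\sigma)[t+1]=\nu(\sigma)+t+2>\nu(\sigma)+t+1\geq w(\sigma)[t]$, completing the monotonicity of~$w$.

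The only subtle point, and the main obstacle to a completely mechanical reading, is noticing that the initialization rule for $w$ is \emph{absolute} rather than relative to the previous value, so a naive induction on monotonicity does not close; one genuinely needs the auxiliary bound $w(\sigma)[t]\leq \nu(\sigma)+t+1$ before invoking the initialization clause. Once this observation is made, all three assertions of the fact follow by directly reading off the construction.
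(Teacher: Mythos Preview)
Your proof is correct. The paper omits the proof entirely (note the \qed\ in the statement and the preceding sentence ``we omit the proofs''), so there is nothing to compare against beyond your argument filling in what the paper leaves as routine. Your observation that the initialization rule for $w$ is absolute and therefore requires the auxiliary bound $w(\sigma)[t]\leq\nu(\sigma)+t+1$ is exactly the one point that deserves a line of justification; everything else is indeed a direct read-off of the construction, which is presumably why the authors felt comfortable omitting the argument.
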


	It is easy to verify that  Fact~\ref{asdjltzhjkqwehk} and Fact~\ref{fact:counters-on-expansionary-stages} carry over to this construction by literally the same arguments.
	The next two statements hold because the pause function is not affected by initializations.
	\begin{fact}\label{sdfjhasdfkjlersdvsdfdfgdfssd}
		Let $t_1 < t_2$ be two consecutive stages at which some strategy $\sigma \in \SigmaS$ is applied. Then $p(\sigma)[t_1]=0$ or $p(\sigma)[t_2]=0$.\qed
	\end{fact}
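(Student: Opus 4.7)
The key observation is that the pause flag $p(\sigma)$ is modified only when the strategy $\sigma$ itself is applied; in particular, the explicit initialization clause touches $c$ and $w$ but leaves $p$ untouched, and the default rule preserves $p(\sigma)[t]$ across any stage in which $\sigma$ is not applied. Thus between the two consecutive stages $t_1 < t_2$ where $\sigma$ is applied, the value of $p(\sigma)$ is frozen at whatever value it takes at $t_1+1$.

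The plan is therefore to reduce the fact to a case split on $p(\sigma)[t_1]$. If $p(\sigma)[t_1] = 0$, the conclusion is immediate. Otherwise assume $p(\sigma)[t_1] = 1$. Inspect the definition of ``threatened at stage $t_1$'': one of its three conjuncts is $p(\sigma)[t_1]=0$, which fails by assumption, so $\sigma$ cannot be threatened at $t_1$. Consequently, when $\sigma$ is applied at stage $t_1$, the construction enters the first (non-threatened) branch of step~(2), whose very first line explicitly assigns $p(\sigma)[t_1+1] := 0$.

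It then remains to propagate this zero forward to stage $t_2$. By the choice of $t_2$ as the \emph{next} stage at which $\sigma$ is applied, no stage $t$ with $t_1 < t < t_2$ applies $\sigma$, and so none of the places in the construction that reassign $p(\sigma)$ is triggered at such a $t$. The fall-through rule at the end of each stage therefore preserves the value, giving $p(\sigma)[t_2] = p(\sigma)[t_1+1] = 0$, which completes the case and thereby the proof.

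No real obstacle is anticipated here: the statement is essentially a bookkeeping observation that formalizes the informal remark preceding it (``the pause function is not affected by initializations''). The only thing to be careful about is verifying from the construction that $p(\sigma)$ is indeed untouched during both the initialization procedure and during stages where other strategies $\tau \neq \sigma$ are applied, which is immediate from inspecting each clause of the construction.
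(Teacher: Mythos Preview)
Your argument is correct and matches the paper's intent; the paper omits the proof entirely (it states the fact with a \qed), relying on exactly the observation you make explicit, namely that $p(\sigma)$ is only ever reassigned in step~(2) when $\sigma$ itself is applied, and that initialization leaves $p$ untouched. One tiny point you glide over: you implicitly assume step~(2) is reached at stage $t_1$, i.e.\ that $|\sigma| \neq t_1$; this is automatic in the case $p(\sigma)[t_1]=1$, since that value can only arise from $\sigma$ having been applied and threatened at some earlier stage $t' < t_1$, which forces $|\sigma| < t' < t_1$.
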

	\begin{fact}\label{sdfjhasdfkjldfghjkasjlersdvsdf}
	Let $t_1 < t_2$ be two consecutive stages at which some strategy $\sigma \in \SigmaS$ is applied. Assume that $\sigma$ is threatened at $t_1$. Then $\sigma$ is not threatened at $t_2$.
	\end{fact}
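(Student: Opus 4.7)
The plan is to leverage the pause flag $p$ that the construction maintains precisely to avoid this sort of scenario. The key observation is that $p(\sigma)$ is only ever modified in step~(2) when $\sigma$ itself is applied; in particular, initializations of $\sigma$ (which only reset $c$ and $w$) do not touch the pause flag, and applications of other strategies do not touch $p(\sigma)$ either.

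First I would note that when $\sigma$ is applied and threatened at $t_1$, the construction explicitly sets $p(\sigma)[t_1+1] := 1$ in the last instruction of step~(2). Since $t_1$ and $t_2$ are consecutive stages at which $\sigma$ is applied, $\sigma$ is not applied at any stage $t$ with $t_1 < t < t_2$, and so $p(\sigma)$ is never reassigned between $t_1+1$ and $t_2$. By the rule that parameters preserve their values unless explicitly overwritten, we therefore have $p(\sigma)[t_2] = p(\sigma)[t_1+1] = 1$.

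Finally, the definition of ``threatened at stage $t_2$'' requires in its first clause that $p(\sigma)[t_2] = 0$. This condition fails, so $\sigma$ cannot be threatened at $t_2$. There is no real obstacle here; the entire argument is bookkeeping on the pause flag, and the fact is essentially why the pause mechanism was introduced into the construction in the first place.
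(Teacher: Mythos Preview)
Your proposal is correct and follows essentially the same approach as the paper: both argue that $p(\sigma)[t_1+1]=1$ is set when $\sigma$ is threatened at $t_1$, that this value persists until $t_2$ because $p(\sigma)$ is only touched when $\sigma$ itself is applied (in particular not by initializations), and that $p(\sigma)[t_2]=1$ violates the first clause of the definition of ``threatened.'' If anything, you supply more explicit justification for the persistence step than the paper's terse ``by construction.''
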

\begin{proof}
	Since $\sigma$ is threatened at $t_1$, by construction we have both $p(\sigma)[t_1] = 0$ and $p(\sigma)[t_1+1] = 1$. Then, by construction, we still have $p(\sigma_l)[t_2] = 1$, and thus $\sigma$ cannot be threatened at~$t_2$.
\end{proof}
The proof of Lemma~\ref{satz:FBNBAA:lem02} has to be modified as follows to reflect the different dynamics of the present construction.
	\begin{lem}\label{fghsdfdhdfsdgdfghgfsfsdf}
		The following statements are equivalent for any $\sigma \in \Sigma^*$:
		\begin{enumerate}
			\item The strategy $\sigma$ is applied and expansionary at infinitely many stages.
			\item The strategy $\sigma$ is applied, not threatened, and expansionary at infinitely many stages.
			\item Infinitely many stages settle on some extension of $\sigma0$.
		\end{enumerate}
	\end{lem}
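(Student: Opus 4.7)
The plan is to prove the three implications $(2)\Rightarrow(1)$, $(3)\Rightarrow(2)$, $(2)\Rightarrow(3)$, and $(1)\Rightarrow(2)$, which jointly yield the equivalence. The first is immediate. For $(3)\Rightarrow(2)$, I would note that a stage settles on an extension of $\sigma0$ only if $\sigma0$ is applied in substage $|\sigma|+1$, and by the construction's step~3 this happens precisely when $\sigma$ is applied, not threatened, expansionary, and with $c(\sigma)=0$; so every stage witnessing (3) also witnesses (2).

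For $(2)\Rightarrow(3)$, I plan to transport the argument for $(1)\Rightarrow(2)$ in Lemma~\ref{satz:FBNBAA:lem02}, replacing ``applied and expansionary'' throughout with ``applied, not threatened, and expansionary''. Fix such a stage $t_0$: if $c(\sigma)[t_0]=0$, then $\sigma0$ is already applied at $t_0$; otherwise, write $c(\sigma)[t_0]=\langle\alpha,k+1\rangle$ and consider the next $k+1$ ``not-threatened-and-expansionary'' applications of $\sigma$, which exist by~(2). At each of those the counter $c(\sigma)$ either already equals $0$ (in which case $\sigma0$ is applied there) or is decremented by one. Between consecutive such stages, $c(\sigma)$ can only be positively reset if $\sigma0$ is itself applied (which already witnesses~(3)), or reset to zero by an initialization of $\sigma$ (in which case the next such stage finds $c(\sigma)=0$ and applies $\sigma0$). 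In every branch we therefore locate a stage $t^\ast\geq t_0$ at which $\sigma0$ is applied, and iterating from arbitrarily late starting points $t_0$ in~(2) produces infinitely many such $t^\ast$.

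The hardest step is $(1)\Rightarrow(2)$, which I would prove by contradiction, combining Fact~\ref{sdfjhasdfkjldfghjkasjlersdvsdf} with the boundedness of~$(x_t)_t$. Suppose (1) holds but (2) fails; then past some stage $T$ every application of $\sigma$ at which $\sigma$ is expansionary is also threatened. At each such application $w(\sigma)$ grows by one, so $w(\sigma)\to\infty$; combined with the threatenedness condition $\ell(e)[t]\geq w(\sigma)[t]$ (where $e:=|\sigma|$), this forces $\varphi_e$ to be total and increasing and $\ell(e)[t]\to\infty$. Since (2) fails, the ``$c=0$'' branch of step~3 is reached only finitely often, so $r(\sigma)$ stabilizes at some value $R$. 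Let $T_1<T_2<\dots$ enumerate the threatened-and-expansionary applications of $\sigma$ past $T$. By Fact~\ref{sdfjhasdfkjldfghjkasjlersdvsdf}, the immediately subsequent application $N_i$ of $\sigma$ is not threatened, and by the failure of~(2) it cannot be expansionary either; hence $x_{N_i}-x_{\varphi_e(\ell(e)[N_i])}\geq 2^{-R}$, while $x_{T_i}-x_{\varphi_e(\ell(e)[T_i])}<2^{-w(\sigma)[T_i]}\to 0$. Rearranging, using the monotonicity of $(x_t)_t$ and of $(x_{\varphi_e(\ell(e)[t])})_t$, yields $x_{N_i}-x_{T_i}\geq 2^{-R-1}$ for all sufficiently large~$i$, so the telescoping sum $\sum_i(x_{N_i}-x_{T_i})$ diverges and $(x_t)_t$ is unbounded --- a contradiction. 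The main obstacle is thus arranging the verification of Theorem~\ref{satz:FBNAA} so that the boundedness of~$(x_t)_t$, the analog of Proposition~\ref{satz:FBNBAA:prop:konvergenz}, is already available when this lemma is proved.
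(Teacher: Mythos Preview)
Your argument is correct, and for the implications $(2)\Rightarrow(1)$, $(3)\Rightarrow(2)$, and $(2)\Rightarrow(3)$ it essentially coincides with the paper's (the paper shows the slightly weaker $(3)\Rightarrow(1)$, but by the same observation you make).

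The real divergence is in $(1)\Rightarrow(2)$. The paper's proof is a two-line local argument: if $\sigma$ is applied and expansionary at $t_1$ but also threatened there, then at the \emph{next} application $t_2$ of $\sigma$ it is not threatened by Fact~\ref{sdfjhasdfkjldfghjkasjlersdvsdf}, and (the paper claims) still expansionary; so every witness to~(1) is either already, or is immediately followed by, a witness to~(2). No global bound on $(x_t)_t$ is needed, and no reordering of the verification is required. Your route, by contrast, is a global contradiction argument: assuming~(2) fails, you freeze $r(\sigma)$ at some $R$, force $\varphi_e$ total via the threatened condition, and then extract from each threatened--expansionary stage $T_i$ a jump $x_{N_i}-x_{T_i}\geq 2^{-R-1}$ using the non-expansionariness of the subsequent application $N_i$; summing these disjoint increments contradicts the boundedness of $(x_t)_t$. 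This is valid, and the dependency you flag is not circular (the analogue of Proposition~\ref{satz:FBNBAA:prop:konvergenz} rests only on Corollary~\ref{jsjhfarjslkdgssfg} and Lemma~\ref{sdfjkasdjlkfgjkleknjjxvcdfdfgdfg}, neither of which invokes the present lemma), so the reordering goes through. What the paper's approach buys is brevity and independence from later results; what yours buys is that it avoids having to justify why $\sigma$ stays expansionary across one application, which in the paper is asserted ``by construction'' without further detail.
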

	\begin{proof}
		``$(1) \Rightarrow (2)$'': Let $t_1$ be any stage at which $\sigma$ is applied and expansionary. We claim there is a stage $t_2 \geq t_1$ at which $\sigma$ is applied, not threatened, and expansionary. If $\sigma$ is not threatened at $t_1$ we can let $t_2:=t_1$. Otherwise let $t_2$ be the next stage at which $\sigma$ is applied. In this case, to see that $t_2$ is as needed, note that by construction and by the definition of an expansionary stage, $\sigma$ is still expansionary at $t_2$. But by Fact~\ref{sdfjhasdfkjldfghjkasjlersdvsdf}, $\sigma$ cannot be threatened at~$t_2$.
		
		\smallskip
		
		``$(2) \Rightarrow (3)$'':  Suppose that there are infinitely many stages at which $\sigma$ is applied, not threatened, and expansionary. Let $t_0 \in \IN$ be an arbitrary  such stage. 
		We claim that there must be a stage $t^\ast \geq t_0$ where $\sigma0$ is applied. 
		If $c(\sigma)[t_0] = 0$, then $t^\ast=t_0$ by construction. 
		Otherwise fix $\alpha \in \SigmaS$ and $k \geq1$ with $c(\sigma)[t_0] = \left\langle \alpha, k\right\rangle$. Then let $t_1, \dots, t_k \in \IN$ denote the $k$ consecutive stages where $\sigma$~is applied, not threatened, and expansionary that immediately follow $t_0$; we claim that  we can find~$t^\ast$ among them. Namely, if between $t_0$ and $t_k$ an initialization of $\sigma$ occurs at some stage $\widehat t$, then let $t^\ast$ be the smallest element of $\{t_0,\dots, t_k\}$ that is greater than~$\widehat t$. Otherwise, if no such initialization occurs, then we must have  $c(\sigma)[t_k] = 0$ by construction, and $t^\ast = t_k$.
		
		\smallskip
		
		``$(3) \Rightarrow (1)$'': Suppose that infinitely many stages settle on some extension of $\sigma0$. By construction $\sigma$ must be applied and expansionary at these stages.
	\end{proof}
	
	Define $J$ and $u$ as in the proof of Theorem~\ref{satz:fast-berechenbar-aufholend-approximierbar}. We continue by noting that Fact~\ref{sdlakjldfgahrjehrfhksbfasjd234}, Lemma~\ref{satz:FBNBAA:lem:spruenge-expansionary}, Fact~\ref{remark:spruenge-expansionary-weak}, Lemma~\ref{satz:FBNBAA:lem:spruenge-threatened}, Fact~\ref{remark:spruenge-threatened-weak}, and 
	Corollary~\ref{jsjhfarjslkdgssfg} again carry over to this construction with literally the same proofs. The proof of Lemma~\ref{sdfjkasdjlkfgjkleknjjxvc} needs to be modified as follows, because here we use different rules for increasing witnesses.	
	\begin{lem}\label{sdfjkasdjlkfgjkleknjjxvcdfdfgdfg}
		Let $I \subseteq J$, let $\sigma \in \SigmaS$ and let $t_0 \leq \min u(I)$. Then we have
		\begin{equation*}
			\sum_{\mathclap{t' \in u(I)\colon \truepath[t']=\sigma}}
			\; 2^{-w(\sigma)[t']} \leq 2^{-w(\sigma)[t_0]+1}.
		\end{equation*}
	\end{lem}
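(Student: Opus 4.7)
The plan is to mirror the structure of the proof of Lemma~\ref{sdfjkasdjlkfgjkleknjjxvc} but to replace its key ingredient, which was Lemma~\ref{satz:FBNBAA:lem01} (``each witness value can only cause at most one threatened stage, because re-threatening requires initialization''), by an argument tailored to the new construction. The mechanism that here guarantees the required strict growth of $w(\sigma)$ between consecutive threatened stages is the explicit update $w(\sigma)[t+1] := w(\sigma)[t]+1$ that is performed whenever $\sigma$ is threatened at $t$.

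First I would observe, exactly as before, that by the definition of $u$ every $t'$ appearing in the sum is a stage at which $\sigma$ is applied and threatened. Next I would enumerate the relevant stages in increasing order as $t_1' < t_2' < \dots$ and show that $w(\sigma)[t_{k+1}'] \geq w(\sigma)[t_k']+1$ for each $k$. This follows immediately from the update rule combined with the monotonicity of $w(\sigma)$ recorded in Fact~\ref{satz:FBNAA:lem04}: at $t_k'$ the construction sets $w(\sigma)[t_k'+1] = w(\sigma)[t_k']+1$, and between $t_k'+1$ and $t_{k+1}'$ the witness can only stay the same or grow further (it can never shrink, since even initialization resets it to $\nu(\sigma)+t+2 \geq t+2$). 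Combined with $w(\sigma)[t_0] \leq w(\sigma)[t_1']$, which uses the hypothesis $t_0 \leq \min u(I) \leq t_1'$, this yields $w(\sigma)[t_k'] \geq w(\sigma)[t_0] + (k-1)$ for every $k \geq 1$.

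Finally I would finish with the same geometric-series estimate as in the original lemma:
\[
\sum_{t' \in u(I)\colon \truepath[t']=\sigma} 2^{-w(\sigma)[t']} \;\leq\; \sum_{k=1}^{\infty} 2^{-(w(\sigma)[t_0]+k-1)} \;=\; 2^{-w(\sigma)[t_0]+1}.
\]
The only real obstacle is the conceptual one addressed in the second paragraph: because negative requirements in this construction may receive attention infinitely often (unlike in the proof of Theorem~\ref{satz:fast-berechenbar-aufholend-approximierbar}), one can no longer appeal to the ``at most one threatened stage per witness value'' lemma and must instead extract the needed strict growth of $w(\sigma)$ directly from the threatened-stage update rule. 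Once that replacement is identified the rest of the argument is routine.
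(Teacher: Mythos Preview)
Your proposal is correct and follows essentially the same approach as the paper: both observe that each $t'$ in the sum is a stage where $\sigma$ is applied and threatened, use the explicit update $w(\sigma)[t'+1]=w(\sigma)[t']+1$ together with the monotonicity of $w(\sigma)$ to conclude that the witness grows by at least~$1$ between consecutive such stages, and finish with the same geometric-series bound. Your write-up is simply more explicit about the enumeration and the use of Fact~\ref{satz:FBNAA:lem04}, but the argument is identical in substance.
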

	\begin{proof}
		By definition of $u$, all the numbers $t'$ that appear in the sum on the left-hand side are stages where $\sigma$ is applied and threatened. By construction, between two such stages,  the value of $\sigma$'s witness grows by at least $1$.
		Thus,	
		\begin{equation*}
			\sum_{\mathclap{t' \in u(I)\colon \truepath[t']=\sigma}}
			\; 2^{-w(\sigma)[t']}
			\leq \sum_{k=0}^{\infty} \;
			2 ^{-(w(\sigma)[t_0]+k)}
			\leq 2^{-w(\sigma)[t_0]+1}. \qedhere
		\end{equation*}
	\end{proof}
	Using these tools, we can prove the following statement.
	\begin{prop}
		The sequence $(x_t)_t$ is computable, non-decreasing, and bounded from above. Thus, its limit $x := \lim_{t\to\infty} x_t$ is a left-computable number.
	\end{prop}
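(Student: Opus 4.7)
The plan is to mirror the structure of the proof of Proposition~\ref{satz:FBNBAA:prop:konvergenz} almost verbatim, using the analogous lemmata that have already been adapted to the present construction. First I would observe that computability is immediate: every substage decision (whether a strategy is threatened, expansionary, etc.) is decidable from the current finite data, and each stage consists of at most $t+1$ substages, so the sequence $(x_t)_t$ and all parameter functions are computable uniformly in $t$. The non-decreasing property is also immediate, since each branch of the construction either leaves $x_{t+1} := x_t$ or sets $x_{t+1} := x_t + 2^{-k}$ for some $k$.

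The only substantive part is proving that $(x_t)_t$ is bounded above, i.e.\ that $\sum_{t=0}^\infty (x_{t+1} - x_t)$ converges. As in the previous section, I would restrict attention to $J = \{t : x_{t+1} > x_t\}$, apply Corollary~\ref{jsjhfarjslkdgssfg} (which carries over unchanged) to reduce the sum over $J$ to a sum over $u(J)$ indexed by the corresponding witness values, and then reorganize the sum by the string $\truepath[t']$ that each $t' \in u(J)$ settles on. For each fixed $\sigma$, the contribution is bounded by Lemma~\ref{sdfjkasdjlkfgjkleknjjxvcdfdfgdfg}, which yields $2^{-w(\sigma)[0]+1} = 2^{-\nu(\sigma)+1}$. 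Summing over all $\sigma \in \SigmaS$ gives a convergent geometric-style tail.

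Concretely, the computation is
\begin{align*}
\sum_{t=0}^\infty (x_{t+1} - x_t)
  &= \sum_{t\in J} (x_{t+1} - x_t)
   \leq \sum_{t' \in u(J)} 2^{-w(\truepath[t'])[t']} \\
  &= \sum_{\sigma \in \SigmaS} \;\; \sum_{t' \in u(J)\colon \truepath[t']=\sigma} 2^{-w(\sigma)[t']} \\
  &\leq \sum_{\sigma \in \SigmaS} 2^{-w(\sigma)[0]+1}
   = \sum_{\sigma \in \SigmaS} 2^{-\nu(\sigma)+1} < \infty.
\end{align*}
Since $\nu$ is a bijection between $\SigmaS$ and $\IN$, the final sum is a convergent geometric series, and boundedness of $(x_t)_t$ follows. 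Hence the limit $x$ exists, and being the limit of a computable non-decreasing sequence of rationals it is left-computable.

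I do not foresee any real obstacle here: all the preparatory lemmata (Corollary~\ref{jsjhfarjslkdgssfg} and Lemma~\ref{sdfjkasdjlkfgjkleknjjxvcdfdfgdfg}) have been explicitly stated to hold in the present construction, so the proof is essentially a citation assembly. The only minor care needed is to notice that even though witnesses here grow by at least $1$ rather than at least $2$ between successive threatened stages for the same $\sigma$, Lemma~\ref{sdfjkasdjlkfgjkleknjjxvcdfdfgdfg} still delivers the bound $2^{-w(\sigma)[t_0]+1}$, which is precisely what drives the outer sum to converge via $\sum_\sigma 2^{-\nu(\sigma)+1}$.
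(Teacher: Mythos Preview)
Your proposal is correct and follows essentially the same approach as the paper: the paper explicitly states that the proof is literally the same as that of Proposition~\ref{satz:FBNBAA:prop:konvergenz}, except with Lemma~\ref{sdfjkasdjlkfgjkleknjjxvcdfdfgdfg} in place of Lemma~\ref{sdfjkasdjlkfgjkleknjjxvc}, which is precisely what you do. Your additional remarks on computability, non-decreasingness, and the witness-growth rate are accurate elaborations of what the paper treats as immediate.
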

	
	\begin{proof}
		The proof is literally the same as that of Proposition~\ref{satz:FBNBAA:prop:konvergenz}, except with Lemma~\ref{sdfjkasdjlkfgjkleknjjxvcdfdfgdfg} in place of Lemma~\ref{sdfjkasdjlkfgjkleknjjxvc}.
	\end{proof}

The following is analogous to Proposition~\ref{sdfjlw3ljsdadfgjrfasdfdfg}, but allows for an easier proof.
\begin{prop}\label{satz:FBNAA:lem03}
For every $l \in \IN$, there exists a binary string $\sigma_l \in \Sigma^{l}$ satisfying the following conditions:
\begin{enumerate}
	\item There exist infinitely stages in which $\sigma_l$ is applied.
	\item There exist only finitely many stages in which $\sigma_l$ is initialized.
\end{enumerate}
In particular, the map $t \mapsto \left|\truepath[t]\right|$ is unbounded, and if we let 
$\truepath$ denote the true path of $(\truepath[t])_t$ then  conditions (1) and (2) hold for every $\sigma \prefix \truepath$.
\end{prop}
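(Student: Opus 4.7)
The plan is to proceed by induction on $l$, following the pattern of Proposition~\ref{sdfjlw3ljsdadfgjrfasdfdfg} but exploiting a key simplification of the present construction: its initialization clauses never touch strict extensions of the triggering strategy, only strategies to its lexicographic right. The base case $l = 0$ is trivial with $\sigma_0 := \lambda$, since $\lambda$ is applied in every stage and can never be initialized. For the inductive step, I assume $\sigma_l$ satisfies conditions~(1) and~(2), and let $t_0$ be the last stage at which $\sigma_l$ is initialized.

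I would then branch on whether $\sigma_l 0$ is applied at infinitely many stages. If yes, set $\sigma_{l+1} := \sigma_l 0$; condition~(1) is immediate. For~(2), unfolding the definition of $<_L$ shows that $\sigma <_L \sigma_l 0$ is equivalent to $\sigma <_L \sigma_l$, while $\sigma 0 <_L \sigma_l 0$ holds iff $\sigma <_L \sigma_l$ or $\sigma$ is a proper prefix of $\sigma_l$ whose next bit in $\sigma_l$ equals $1$. In every such case, the application that triggers an initialization of $\sigma_l 0$ in clause~(1), (2), or~(3) of the construction also triggers an initialization of $\sigma_l$ at the same stage, and by the inductive hypothesis only finitely many such stages can occur past $t_0$.

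If on the other hand $\sigma_l 0$ is applied only finitely often, Lemma~\ref{fghsdfdhdfsdgdfghgfsfsdf} yields that $\sigma_l$ is expansionary at only finitely many stages, and Fact~\ref{sdfjhasdfkjldfghjkasjlersdvsdf} gives that $\sigma_l$ is non-threatened at infinitely many of the stages where it is applied. Past a common threshold, every application of $\sigma_l$ finds it neither threatened nor expansionary, so the construction applies $\sigma_l 1$ in the next substage; hence $\sigma_{l+1} := \sigma_l 1$ satisfies~(1). For~(2) I would again enumerate the possible initialization triggers for $\sigma_l 1$: applications of some $\sigma <_L \sigma_l$ (finite, because each also initializes $\sigma_l$), applications of some $\sigma \suffixeq \sigma_l 0$ (finite, because $\sigma_l 0$ is applied only finitely often and its proper extensions are applied at a sub-collection of those stages), the strategy $\sigma_l$ itself acting in clause~(3) via the ``$\sigma 0 <_L \tau$'' rule (finite, because clause~(3) initializes only in the expansionary positive-counter subcase), and proper prefixes of $\sigma_l$ whose next bit in $\sigma_l$ is $1$ acting in clause~(3) (each such step also initializes $\sigma_l$, hence finite).

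The ``in particular'' claim then follows by matching the recursion above against Definition~\ref{def:true-path}: the rule ``take $\sigma_{l+1} = \sigma_l 0$ iff $\sigma_l 0$ is applied infinitely often'' coincides with ``iff $\sigma_l 0 \prefixeq \truepath[t]$ for infinitely many $t$'', which is precisely the branching rule defining $\truepath$. So the length-$l$ prefix of $\truepath$ equals $\sigma_l$ for every $l$; conditions~(1) and~(2) therefore transfer to every $\sigma \prefix \truepath$, and $t \mapsto |\truepath[t]|$ is unbounded because every application of $\sigma_l$ forces a stage of length at least $l$. The main (and really only substantive) obstacle is the bookkeeping in the two case analyses of initialization triggers above; it is exactly here that the simpler initialization rules of the present construction make the argument noticeably cleaner than the analogous part of Proposition~\ref{sdfjlw3ljsdadfgjrfasdfdfg}.
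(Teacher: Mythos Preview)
Your approach is essentially identical to the paper's: induction on $l$ with the same case split on whether $\sigma_l 0$ is applied infinitely often, the same use of Lemma~\ref{fghsdfdhdfsdgdfghgfsfsdf} and Fact~\ref{sdfjhasdfkjldfghjkasjlersdvsdf} in the second case, and the same observation that the simplified initialization rules make every trigger that initializes $\sigma_l 0$ (respectively $\sigma_l 1$, past the relevant threshold) also initialize $\sigma_l$. One slip to fix: in the second case you write ``Past a common threshold, every application of $\sigma_l$ finds it neither threatened nor expansionary,'' but $\sigma_l$ may well be threatened infinitely often (indeed it will be whenever $\varphi_{|\sigma_l|}$ is total and increasing); what Fact~\ref{sdfjhasdfkjldfghjkasjlersdvsdf} actually gives you, and what the paper uses, is only that \emph{infinitely many} of those applications are non-threatened, which is enough to conclude that $\sigma_l 1$ is applied infinitely often.
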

\begin{proof} 
	We proceed by induction. The claim trivially holds for $l = 0$ since $\lambda$ is applied in every stage but never initialized.
	Assume that for some fixed $l \in \IN$ there exists a strategy $\sigma_l \in \Sigma^l$ satisfying both conditions. Let $t_0 \in \IN$ be the earliest stage after which $\sigma_l$ is no longer initialized. 
	There are two cases:
	\begin{itemize}
		\item {\em $\sigma_l0$ is applied infinitely often:}
		We claim $\sigma_l0$ is never initialized after $t_0$; this is because by construction any initialization of $\sigma_l0$ would have to occur in a stage where some $\gamma <_L \sigma_l0$ is applied; this would also initialize $\sigma_l$, contradiction. Thus we can choose $\sigma_{l+1} := \sigma_l 0$.
		
		\item {\em $\sigma_l0$ is applied only finitely often:}
		Due to Lemma~\ref{fghsdfdhdfsdgdfghgfsfsdf} in this case there is a stage $t_1 \geq t_0$ after which $\sigma_l$ is never again applied and expansionary. Then, by construction, $\sigma_l1$ is never again initialized after $t_1$.
 We also claim that for every $t_2 \geq t_1$ at which $\sigma_l$ is applied there exists a stage $t_3\geq t_2$ at which $\sigma_l1$ is applied. If $\sigma_l$ is not threatened at $t_2$ then by construction $\sigma_l1$ is applied at $t_2$, and we are done. Otherwise, if $\sigma_l$ is threatened at $t_2$, then by Fact~\ref{sdfjhasdfkjldfghjkasjlersdvsdf} we have that $\sigma_l$ cannot be threatened at the next stage $t_3 > t_2$ where $\sigma_l$ is applied. Thus $\sigma_l1$ is applied at $t_3$. 
Consequently, we can choose $\sigma_{l+1} := \sigma_l 1$.
	\end{itemize}
The second part of the proposition is proven in literally the same way as in the proof of Proposition~\ref{sdfjlw3ljsdadfgjrfasdfdfg}.
\end{proof}

The following lemma demonstrates that the dynamics that result from the way the construction was set up work as intended.
\begin{lem}\label{dfjkhasdjldfgmjasddh}
	Let $e$ be such that $\varphi_e$ is total and increasing and let $\sigma \prefix\truepath$ with $|\sigma|=e$. Then the following statements hold:
	\begin{enumerate}
		\item There exist infinitely many stages at which $\sigma$ is applied and threatened, and in particular, $(w(\sigma)[t])_t$ tends to infinity. Furthermore, if $t_0$ is a stage after which $\sigma$ is never again initialized, then for ${m := w(\sigma)[t_0]}$ we have that every $k \geq m$ is an element of $(w(\sigma)[t])_t$.
		
		\item There exist infinitely many stages at which $\sigma$ is applied and expansionary.
		In particular, $(r(\sigma)[t])_t$ tends to infinity.
	\end{enumerate}
\end{lem}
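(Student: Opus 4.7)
The plan is to prove both parts by contradiction, leveraging that $\varphi_e$ total and increasing makes $\ell(e)[t]$ tend to infinity and $x_t - x_{\varphi_e(\ell(e)[t])}$ tend to $0$, together with Proposition~\ref{satz:FBNAA:lem03} which supplies a stage $t_0$ after which $\sigma$ is never initialized while still being applied infinitely often.

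For part (1), I would suppose that $\sigma$ is threatened only finitely often and fix a stage $t_1 \geq t_0$ after which no application of $\sigma$ is threatened. Then $(w(\sigma)[t])_{t \geq t_1}$ is constant at some value $w$ (no initialization, no threat), and by construction each non-threatened application sets the pause flag back to $0$; hence from some point on, $p(\sigma)[t'] = 0$ whenever $\sigma$ is applied at a stage $t'$. Since $\ell(e)[t] \to \infty$ and $x_t - x_{\varphi_e(\ell(e)[t])} \to 0$, the other two threat conditions hold for all sufficiently large $t$. Picking any sufficiently late application of $\sigma$ yields the desired contradiction. Having established that $\sigma$ is threatened infinitely often, and noting that after $t_0$ the only source of change for $w(\sigma)$ is a threatened application (which increments it by exactly $1$), the sequence $(w(\sigma)[t])_{t \geq t_0}$ visits every integer from $m := w(\sigma)[t_0]$ onward, and in particular tends to infinity.

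For part (2), I would similarly suppose for contradiction that $\sigma$ is expansionary only finitely often; then $r(\sigma)$ stabilizes at some value $R$. But for all large enough $t$ we have $\ell(e)[t] \geq 0$ and $x_t - x_{\varphi_e(\ell(e)[t])} < 2^{-R}$, so $\sigma$ would be expansionary at every sufficiently late application, a contradiction. Once infinitely many expansionary applications are secured, Lemma~\ref{fghsdfdhdfsdgdfghgfsfsdf} gives infinitely many stages settling on extensions of $\sigma 0$; by the construction, such stages correspond exactly to applications of $\sigma$ that are expansionary with $c(\sigma) = 0$, and at each of them $r(\sigma)$ is incremented by $1$, so $r(\sigma)[t] \to \infty$.

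The main subtlety is the bookkeeping for the pause flag in part (1): one must confirm that, after the last time $\sigma$ is threatened, the flag is reliably $0$ at every subsequent application, so that it cannot spuriously block the threat condition from being satisfied. Fact~\ref{sdfjhasdfkjldfghjkasjlersdvsdf} already ensures that $\sigma$ cannot be trapped in a permanent paused state through its own threat dynamics, and the construction explicitly resets the flag to $0$ whenever $\sigma$ is applied and not threatened, so this reduces to a direct verification rather than a deep argument.
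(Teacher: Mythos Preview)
Your proposal is correct and follows essentially the same approach as the paper. Both arguments rest on Proposition~\ref{satz:FBNAA:lem03}, the fact that $\ell(e)[t]\to\infty$ and $x_t - x_{\varphi_e(\ell(e)[t])}\to 0$, and Lemma~\ref{fghsdfdhdfsdgdfghgfsfsdf} for the $r(\sigma)[t]\to\infty$ conclusion; the only stylistic differences are that you frame both parts by contradiction and reason about the pause flag directly, whereas the paper proceeds directly and cites Facts~\ref{sdfjhasdfkjlersdvsdfdfgdfssd} and~\ref{sdfjhasdfkjldfghjkasjlersdvsdf}. One minor imprecision: your phrase ``correspond exactly to applications of $\sigma$ that are expansionary with $c(\sigma)=0$'' omits the requirement that $\sigma$ also be not threatened at such a stage, but since you only use the implication from ``settles on an extension of $\sigma0$'' to ``$r(\sigma)$ is incremented'', this does not affect correctness.
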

\begin{proof} \; \nopagebreak
\begin{enumerate}
	\item 	 By Proposition~\ref{satz:FBNAA:lem03}, $\sigma$ is applied infinitely often and there is a stage $t_0$ after which $\sigma$ is not initialized anymore. Then, for $t> t_0$ we can only have $w(\sigma)[t+1] > w(\sigma)[t]$ if $\sigma$ is applied and threatened at $t$. 
	Recall that the assumption that $\varphi_e$ is total and increasing is equivalent to the statement that 
$(\ell(e)[t])_t$ tends to infinity. 
	Thus, by the definition of a threatened stage, by the fact that $(x_t)_t$ converges, and using Fact~\ref{sdfjhasdfkjlersdvsdfdfgdfssd}, for every $t_1> t_0$ there exists a $t_2 \geq t_1$ at which $\sigma$ is applied and threatened and such that $w(\sigma)[t_1+1] = w(\sigma)[t_1]+1$. This is enough to establish all three claims.
	
	\item Let $t_1 \in \IN$ be arbitrary. 
	By (1), there are infinitely many stages at which $\sigma$~is applied and threatened. Thus, by Fact~\ref{sdfjhasdfkjldfghjkasjlersdvsdf}, there must also be infinitely many stages, where $\sigma$ is applied and not threatened. 
	Therefore, and since $(x_t)_t$ converges, there is a smallest $t_2 \geq t_1$ such that for all 
	$t \geq t_2$ we have 
	${x_t - x_{\varphi_{e}(\ell(e)[t])} < 2^{-r(\sigma)[t_1]}}$
	and at which $\sigma$ is applied, not threatened, and by by definition expansionary.
	Thus, since $t_1$ was arbitrary, there are infinitely many stages at which $\sigma$ is applied and expansionary. Then by Lemma~\ref{fghsdfdhdfsdgdfghgfsfsdf}, $\sigma0$ is applied at infinitely many stages, and by construction, $(r(\sigma)[t])_t$ tends to infinity.\qedhere
\end{enumerate}	
\end{proof}

Noting that Lemma~\ref{satz:FBNBAA:lem07} also holds for this construction with literally the same proof as before, we are ready to prove that all negative requirements are satisfied.
\begin{prop}
	$\mathcal{N}_e$ is statisfied for all $e \in \IN$.
\end{prop}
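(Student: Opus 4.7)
The plan is to follow the template of Proposition~\ref{satz:FBNBAA:prop:negativ} but exploit the strengthening in Lemma~\ref{dfjkhasdjldfgmjasddh}(1) in order to obtain the universal (rather than merely existential) conclusion that the new formulation of $\mathcal{N}_e$ requires. Fix $e \in \IN$, assume $\varphi_e$ is total and increasing, and let $\sigma \prefix \truepath$ with $\lvert\sigma\rvert = e$. By Proposition~\ref{satz:FBNAA:lem03} there exists an earliest stage $t_0$ after which $\sigma$ is never initialized again; set $m := w(\sigma)[t_0]$. I will show that $x - x_{\varphi_e(n)} \geq 2^{-n}$ holds for every $n \geq m$.

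Fix $n \geq m$. By the second part of Lemma~\ref{dfjkhasdjldfgmjasddh}(1), there exists a stage $t_n \geq t_0$ at which $\sigma$ is applied and threatened with $w(\sigma)[t_n] = n$; by Proposition~\ref{satz:FBNAA:lem03} there is a subsequent stage $t_n^+ > t_n$ at which $\sigma$ is again applied, and by the choice of $t_0$ no initialization of $\sigma$ occurs in between. The definition of a threatened stage gives $\ell(e)[t_n] \geq w(\sigma)[t_n] = n$, so since $\varphi_e$ is increasing and $(x_t)_t$ is non-decreasing, we have $x_{\varphi_e(n)} \leq x_{\varphi_e(\ell(e)[t_n])}$. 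Lemma~\ref{satz:FBNBAA:lem07}(2) (which carries over to the present construction by the same proof) then yields
\begin{equation*}
x - x_{\varphi_e(n)} \; \geq \; x_{t_n^+} - x_{\varphi_e(\ell(e)[t_n])} \; \geq \; 2^{-w(\sigma)[t_n]} \; = \; 2^{-n},
\end{equation*}
which establishes $\mathcal{N}_e$.

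The only non-routine ingredient is the fact that the witness of $\sigma$ hits \emph{every} integer $k \geq m$, not merely tends to infinity. This is precisely where the altered construction pays off: in contrast to Theorem~\ref{satz:fast-berechenbar-aufholend-approximierbar}, where an initialization increments the witness by at least $2$, here the only mechanism that changes $w(\sigma)[\cdot]$ after $t_0$ is the $+1$ increment inside the negative-requirement clause triggered at each threatened application of $\sigma$. Because the infinitary nature of the requirement forces $\sigma$ to be threatened infinitely often (via Fact~\ref{sdfjhasdfkjlersdvsdfdfgdfssd}, the divergence of $\ell(e)$, and the convergence of $(x_t)_t$, as already exploited in the proof of Lemma~\ref{dfjkhasdjldfgmjasddh}(1)), the witness traverses every value from $m$ upward, which is exactly what the universal quantifier in $\mathcal{N}_e$ demands. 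I expect this quantifier tracking — ensuring that the combinatorial dynamics really deliver a threatened stage with witness value \emph{equal} to $n$ rather than only $\geq n$ — to be the sole delicate point; once it is in hand, Lemma~\ref{satz:FBNBAA:lem07}(2) closes the argument immediately.
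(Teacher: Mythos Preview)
Your proof is correct and follows essentially the same approach as the paper: fix $\sigma \prefix \truepath$ with $|\sigma|=e$, let $t_0$ be the last initialization stage and $m=w(\sigma)[t_0]$, use Lemma~\ref{dfjkhasdjldfgmjasddh}(1) to obtain for each $n\geq m$ a threatened stage with witness value exactly $n$, and then apply Lemma~\ref{satz:FBNBAA:lem07}(2) at that stage together with $\ell(e)[t_n]\geq n$ to conclude $x-x_{\varphi_e(n)}\geq 2^{-n}$. The only cosmetic difference is notation ($t_n,t_n^+$ versus the paper's $t_1,t_2$) and your added explanatory paragraph on why the witness hits every value, which the paper leaves implicit.
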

\begin{proof} 
	Let $e$ be such that $\varphi_e$ is total and increasing, and let $\sigma \prefix\truepath$ with $|\sigma|=e$. 	Let $t_0 \in \IN$ be the earliest stage after which $\sigma$ is not initialized anymore. Let $m$ be as in Lemma~\ref{dfjkhasdjldfgmjasddh}~(1), and let $n \geq m$ be arbitrary. Then there exists a uniquely determined stage $t_1 \geq t_0$ at which $\sigma$ is applied and threatened and such that $w(\sigma)[t_1] = n$. Let $t_2 > t_1$ be the next stage at which $\sigma$ is applied. Then 
	\[
		x - x_{\varphi_{e}(n)} = x - x_{\varphi_{e}(w(\sigma)[t_1])} \\
		\geq x_{t_2} - x_{\varphi_{e}(\ell(e)[t_1])}  \\
		\geq   2^{-w(\sigma)[t_1]} \\
		= 2^{-n};
	\]
	here, the first inequality uses the fact that 
	$\ell(e)[t_1] \geq w(\sigma)[t_1]$  by definition of a threatened stage, and the second inequality uses Lemma~\ref{satz:FBNBAA:lem07}.

	In summary, there exists a number $m \in \IN$ such that for all $n \geq m$ we have $x - x_{\varphi_{e}(n)} \geq 2^{-n}$. Therefore,  $\mathcal{N}_e$ is satisfied.
\end{proof}
\begin{kor} 
	The number $x$ is not regainingly approximable.\qed
\end{kor}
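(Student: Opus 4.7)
The plan is to derive the corollary directly from the preceding Proposition (which established $\mathcal{N}_e$ for all $e \in \IN$) combined with the characterization of regaining approximability in Lemma~\ref{prop:charakterisierung-aa}. The argument will proceed by contraposition: if $x$ were regainingly approximable, then Lemma~\ref{prop:charakterisierung-aa} applied to our specific computable non-decreasing sequence $(x_t)_t$ that was just constructed would yield a computable increasing function $s \colon \IN \to \IN$ such that $x - x_{s(n)} < 2^{-n}$ for infinitely many $n \in \IN$.

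Next, I would fix an index $e \in \IN$ with $s = \varphi_e$. Since $s$ is total and increasing, the hypothesis of $\mathcal{N}_e$ is met, so the Proposition guarantees the existence of an $m \in \IN$ such that $x - x_{\varphi_e(n)} \geq 2^{-n}$ holds for \emph{all} $n \geq m$. This directly contradicts the existence of infinitely many $n$ with $x - x_{s(n)} < 2^{-n}$, completing the proof.

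There is essentially no obstacle here: all the genuine difficulty was already absorbed into the construction of $(x_t)_t$ and the verification that every $\mathcal{N}_e$ is satisfied in its universal-quantifier form (as opposed to the merely existential form used in the proof of Theorem~\ref{satz:fast-berechenbar-aufholend-approximierbar}). The stronger quantifier structure of $\mathcal{N}_e$ in the present construction is precisely what is needed to rule out the ``infinitely often'' condition of the characterization, rather than just the ``for all $n$'' condition that characterizes computability. This is why the author marks the corollary with \qed and considers it immediate.
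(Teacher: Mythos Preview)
Your proposal is correct and matches the paper's intended reasoning exactly: the negative requirements $\mathcal{N}_e$ were set up precisely so that no computable increasing $s$ can satisfy the condition in Lemma~\ref{prop:charakterisierung-aa}(2) for the constructed sequence $(x_t)_t$, and you have spelled out this immediate deduction faithfully. The paper marks the corollary with \qed for exactly the reason you give.
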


It remains to prove that $x$ is nearly computable. 
Write 
\[S := \{e \in \IN \mid \varphi_{e} \text{ is total and increasing}\}.\] The following lemma is analogous to Lemma~\ref{satz:FBNBAA:lem08}; however, the infinitary nature of the negative requirements and the different initialization strategy used in this construction are reflected in the higher complexity of its statement and proof.
\begin{lem}\label{satz:FBNAA:lem09}
	 Let $\sigma \prefix \truepath$ and $t_0 \in \IN$ be the earliest stage after which $\sigma$ is no longer initialized and after which there are no more stages at which any $\tau \prefixeq \sigma$ with $\left|\tau\right| \notin S$ is applied and threatened. Let $t_1, t_2 \in \IN$ with $t_0 \leq t_1< t_2$ be two consecutive stages at which 
	$\sigma$ is applied and expansionary. Then we have
	\begin{equation*}
		x_{t_2} - x_{t_1} \leq 2^{-r(\sigma)[t_1] + 1} + \sum_{\mathclap{\substack{\phantom{,}\tau \prefixeq \sigma, \\ \left|\tau\right| \in S}}} 2^{-w(\tau)[t_1]+1}.
	\end{equation*}
\end{lem}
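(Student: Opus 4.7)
The plan is to mirror closely the proof of Lemma~\ref{satz:FBNBAA:lem08}, classifying every jump made at a stage~$t$ with $t_1 \le t < t_2$ as either originating from a \emph{standing} counter obligation already present at the start of stage~$t_1$, or from a \emph{fresh} threat that occurs at some stage in $[t_1, t_2)$. The key novelty compared with the earlier lemma is that here a prefix of~$\sigma$ whose index lies in~$S$ can be threatened many times between $t_1$ and~$t_2$, and these repeated threats are exactly the source of the additional summation term in the conclusion.

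For the standing contribution I plan to reuse the same type classification as in the proof of Lemma~\ref{satz:FBNBAA:lem08}. Strategies $\tau$ with $\tau <_L \sigma$ (where $\tau$ is not a prefix of~$\sigma$) cannot be applied after~$t_1$, since any such application would force the current stage to settle on some~$\mu$ with $\mu <_L \sigma$ or $\mu 0 <_L \sigma$, initializing~$\sigma$ and contradicting the choice of~$t_0$. For $\tau$ with $\tau 0 \prefixeq \sigma$ we have $c(\tau)[t_1] = 0$ by Fact~\ref{fact:counters-on-expansionary-stages}; and for $\tau$ with $\tau 1 \prefixeq \sigma$, a positive counter value can never be decremented after~$t_1$, because the decrementing execute-or-delegate branch of~(3) initializes all strategies $\tau'$ with $\tau 0 <_L \tau'$, and $\tau 0 <_L \sigma$ puts $\sigma$ in that set. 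The two remaining ``old'' sources are type~(5) ($\sigma 0 \prefixeq \tau$) and type~(6) ($\tau = \sigma$); these are mutually exclusive (type~(5) requires $c(\sigma)[t_1] = 0$ so that $\sigma 0$ is actually reached as an applied strategy; type~(6) requires $c(\sigma)[t_1] > 0$), and each contributes at most $2^{-r(\sigma)[t_1]}$ by Fact~\ref{remark:spruenge-expansionary-weak} or Lemma~\ref{satz:FBNBAA:lem:spruenge-expansionary}, applied exactly as before.

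The fresh contributions I would split into two further groups. For fresh threats to strategies~$\tau$ with $\sigma 0 <_L \tau$, the plan is to argue that every such~$\tau$ is initialized at the end of stage~$t_1$: stage~$t_1$ settles either on $\sigma$ itself (in the type~(6) case, where the initialization set is exactly $\{\tau' : \sigma 0 <_L \tau'\}$) or on some~$\mu$ with $\sigma 0 \prefixeq \mu$ (in the type~(5) case), and the small combinatorial observation that $\sigma 0 \prefixeq \mu$ together with $\sigma 0 <_L \tau$ implies $\mu <_L \tau$ ensures that $\tau$ is initialized in either case. Consequently $w(\tau)[t_1 + 1] = \nu(\tau) + t_1 + 2$ for each such $\tau$, and combining Corollary~\ref{jsjhfarjslkdgssfg} with Lemma~\ref{sdfjkasdjlkfgjkleknjjxvcdfdfgdfg} summed over these $\tau$ yields a contribution of at most $2^{-t_1} \le 2^{-r(\sigma)[t_1]}$ via Fact~\ref{satz:FBNAA:lem04}. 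For fresh threats to a prefix $\tau \prefixeq \sigma$, the choice of~$t_0$ eliminates the case $|\tau| \notin S$; and for $|\tau| \in S$, Lemma~\ref{sdfjkasdjlkfgjkleknjjxvcdfdfgdfg} applied to~$\tau$ directly bounds the cumulative contribution of its threats in~$[t_1, t_2)$ by $2^{-w(\tau)[t_1] + 1}$. Summing over such $\tau$ gives the additional term in the conclusion.

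Adding the two standing bounds and the fresh bound for type~(4) gives $2 \cdot 2^{-r(\sigma)[t_1]} = 2^{-r(\sigma)[t_1] + 1}$, which together with the prefix sum yields the statement. The step I expect to be the main obstacle is the type~(4) initialization claim under the now weaker step~(3) rule: the new initialization set lacks the ``extensions'' component present in the previous construction, so I have to check explicitly that in both the type~(5) and type~(6) sub-cases the stage-ending strategy at~$t_1$ still produces an initialization set large enough to contain every~$\tau$ with $\sigma 0 <_L \tau$. The key point, as sketched above, is the small closure property of $<_L$ under extension on the left argument.
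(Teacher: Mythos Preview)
Your proposal follows essentially the same route as the paper's proof: the same classification of standing counter obligations versus fresh threats, the same bounds via Fact~\ref{remark:spruenge-expansionary-weak}, Lemma~\ref{satz:FBNBAA:lem:spruenge-expansionary}, Corollary~\ref{jsjhfarjslkdgssfg} and Lemma~\ref{sdfjkasdjlkfgjkleknjjxvcdfdfgdfg}, and the same observation that cases~(6) and~(7) (your~(5) and~(6)) are mutually exclusive so that the first three contributions collapse to~$2^{-r(\sigma)[t_1]+1}$. Your explicit verification of the initialization claim for~$\tau$ with $\sigma 0 <_L \tau$ via the closure property of~$<_L$ under left extension is in fact more careful than the paper, which simply asserts the claim.

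One small point worth flagging: your dichotomy ``stage~$t_1$ settles on~$\sigma$ (type~(6)) or on some~$\mu$ with $\sigma 0 \prefixeq \mu$ (type~(5))'' silently assumes that~$\sigma$ is \emph{not threatened} at~$t_1$. When $|\sigma|\in S$, the strategy~$\sigma$ itself may be threatened at~$t_1$; in that sub-case stage~$t_1$ settles on~$\sigma$ via step~(2), and the initialization set is only $\{\tau':\sigma <_L \tau'\}$, which omits extensions of~$\sigma 1$. The paper's proof glosses over exactly the same point, so this is not a divergence from the paper; but if you want the argument to be airtight you should either handle that third sub-case separately or note that the resulting jump is already absorbed into the $\tau=\sigma$ summand and that extensions of~$\sigma 1$ cannot be applied before the next stage at which~$\sigma$ is applied (where, by Fact~\ref{sdfjhasdfkjldfghjkasjlersdvsdf}, $\sigma$~is no longer threatened and the usual case split resumes).
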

As in the proof of Lemma~\ref{satz:FBNBAA:lem08}, we need to analyze all possible causes for jumps being made between stages $t_1$ and $t_2$. Again there are essentially two such causes: we may still have jumps scheduled from before stage $t_1$ that await execution, and new threats may arise after $t_1$ leading to further jumps. 
The most important difference to Lemma~\ref{satz:FBNBAA:lem08} is however that here initial segments $\tau$ of $\sigma$ with $\left|\tau\right| \in S$ may also get threatened --- possibly multiple times --- leading to additional jumps that need to be accounted for by the sum in the statement of the lemma.
\begin{proof}
	We again distinguish between obligations to make jumps that may already stand at the moment when we enter stage $t_1$, and new obligations to make jumps that are created between stages $t_1$ and $t_2$.
	First, to quantify jumps due to potential standing obligations, note that in substages $0$ up to $|\sigma|-1$ of stage $t_1$, no jumps are made by construction. 
	Now consider the following facts:
	\begin{enumerate}
		\item No strategy $\tau <_L \sigma$ will be applied again after stage $t_1$, as otherwise this would lead to initialization of $\sigma$, which contradicts our assumptions. Thus, if any jumps are still scheduled for any such $\tau$ (that is, if $c(\tau)[t_1]>0$), we need not take them into account.
		
		\item For strategies $\tau \in \SigmaS$ with 
		$\tau 0 \prefixeq \sigma$ and $|\tau|\notin S$
		we must have  $c(\tau)[t_1] = 0$ due to Fact~\ref{fact:counters-on-expansionary-stages}. Furthermore,  $\tau$ will never be applied and threatened between stages $t_1$ and $t_2$ by our assumptions. Thus we need not consider these strategies in the remainder of the proof.
		
		\item For strategies $\tau \in \SigmaS$ with 
		$\tau 0 \prefixeq \sigma$ and $|\tau|\in S$
		we must have ${c(\tau)[t_1] = 0}$, as well. However, such a $\tau$ may get threatened between stages $t_1$ and $t_2$ which might lead to jumps that we need to take into account.
				
		\item While for strategies $\tau \in \SigmaS$ with $\tau1 \prefixeq \sigma$ we might have $c(\tau)[t_1]>0$, none of these scheduled jumps will ever be executed from $t_1$ onwards. This is because
		such a jump could only be executed during a stage at which $\tau$ is applied and expansionary. This would by construction lead to the initialization of all $\rho$ with $\tau0 <_L \rho$, which in particular include $\sigma$; contradiction. Thus, as before, we need not take such $\tau$'s into account.
		
		\item No strategy $\tau$ with $ \sigma0 <_L \tau$ can be applied at stage $t_1$ by construction. While such $\tau$'s may be applied during some stage $t_1 < t < t_2$, note that we set $c(\tau)[t_1+1]=0$ for all of them when they are initialized at the end of stage $t_1$.
		
		\item For a strategy $\tau$ with $\sigma0 \prefixeq \tau$ we may indeed have $c(\tau)[t_1] > 0$.
		
		\item Similarly, it might be the case that $c(\sigma)[t_1] > 0$.
	\end{enumerate}
	Thus, any jumps made in stages $t_1$ up to $t_2 -1$ are either caused by some strategy as in~(6) or~(7), or must be due to \textit{new} threats to strategies of type~(3) or~(5) that occur strictly between substage~$|\sigma|$ of stage~$t_1$ and stage $t_2$. We establish upper bounds for all of these cases:
	\begin{itemize}
		\item Concerning strategies of type~(6), by construction, if we make any jump at all while applying them, then these can only occur immediately at stage $t_1$ in substage $|\tau|$.
		We claim that such a jump can not occur due to  $\tau$ being applied and expansionary at $t_1$ with ${c(\tau)[t_1] > 0}$. This is because such a jump would not be made at $t_1$, but would by construction instead be split and scheduled for later execution by some strategy $\rho$ with $\sigma0 \prefixeq \rho0 \prefixeq \tau$, and this strategy $\rho$ will not be applied before stage $t_2$ by construction.
		Thus, the only reason to make a jump in this case is if
		$\tau$ is applied and threatened at $t_1$. Then, by construction the jump made would be of size 
		\[2^{-w(\tau)[t_1]} < 2^{-r(\sigma)[t_1+1]} \leq 2^{-r(\sigma)[t_1]}.\]
		
		\item Concerning~(7), in this case, by construction, we would like to make a jump of size $2^{-r(\sigma)[t_1]}$. The jump may be made immediately, or split and scheduled for later execution by some strategies that are prefixes of $\sigma$; but in either case the total sum of all jumps resulting from this is bounded by $2^{-r(\sigma)[t_1]}$ by Lemma~\ref{satz:FBNBAA:lem:spruenge-expansionary}.
		
		\item  Consider new threats to strategies $\tau$ of type~(5) that occur strictly between substage $|\sigma|$ of stage $t_1$ and stage $t_2$.
		Since these $\tau$'s were all initialized at stage $t_1$, we have $w(\tau)[t_1+1] = \nu(\tau) + t_1 + 2$ for each of them.
		Thus, 
		applying Lemma~\ref{sdfjkasdjlkfgjkleknjjxvcdfdfgdfg} to each such $\tau$ and then summing over all of them, the total sum of all jumps made or scheduled when such $\tau$'s are applied in stages between $t_1$ and $t_2$ can be at most $2^{-t_1}$, which by Fact~\ref{satz:FBNBAA:lem03} is at most~$2^{-r(\sigma)[t_1]}$.	
		
		\item Finally, consider new threats to strategies $\tau$ of type~(3) that occur at some stage $t$ strictly between substage $|\sigma|$ of stage $t_1$ and stage $t_2$. By Fact~\ref{remark:spruenge-threatened-weak} 		
		each such threat can only contribute jumps totaling at most $2^{-w(\tau)[t]}$. And, by construction, whenever this occurs, we set $w(\tau)[t+1]=w(\tau)[t]+1$. Thus, even if the same $\tau$ is threatened multiple times between $t_1$ and $t_2$, the total sum of all jumps associated with these threats sums to at most~${2^{-w(\tau)[t_1]+1}}$.		
	\end{itemize}
By construction, cases~(6) and~(7) exclude each other, thus the jumps caused by cases (5)--(7) combined contribute at most $2^{-r(\sigma)[t_1]+1}$. Together with the maximally possible contribution of all $\tau$'s of type~(3) we obtain \begin{equation*}
	x_{t_2} - x_{t_1} \leq 2^{-r(\sigma)[t_1] + 1} + \sum_{\mathclap{\substack{\phantom{,}\tau \prefixeq \sigma, \\ \left|\tau\right| \in S}}} 2^{-w(\tau)[t_1]+1}.\qedhere
\end{equation*}
\end{proof}

	After these preparations, we can show that all positive requirements are satisfied.
	\begin{prop}
		$\mathcal{P}_e$ is statisfied 	for all $e \in \IN$.
	\end{prop}
	\begin{proof} 
		Let $e \in \IN$ and $\sigma \prefix \mathcal{T}$ with $\left| \sigma \right| = e$. Suppose that $\varphi_{e}$ is  total and increasing. Then, by Lemma~\ref{dfjkhasdjldfgmjasddh}~(2), $(r(\sigma)[t])_{t}$ tends to infinity. Let $t_0 \in \IN$ be the earliest stage after which $\sigma$ is no longer initialized and such that, for all $\tau \prefixeq \sigma$ with $\left|\tau\right| \notin S$, there are no more stages at which $\tau$ is applied and threatened.
		For $n\in\IN$, write  
		\[t(n):=\min\left\{t \geq t_0\colon\; 
			\parbox{9cm}{\centering
				$r(\sigma)[t] \geq n+3$ and $\sum_{\tau \prefixeq \sigma, \left|\tau\right| \in S} 2^{-w(\tau)[t] + 1} \leq 2^{-(n+1)}$\\
				and $\sigma$ is applied and expansionary at stage $t$
			}
		\right\}.\]
		
		Note that for each $\tau \prefixeq \sigma$ with $\left|\tau\right| \in S$ we have that $(w(\tau)[t])_t$ tends to infinity by Lemma~\ref{dfjkhasdjldfgmjasddh}~(1), and thus $t(n)$ is defined for every $n$.
		Thus, we can define a function $v \colon  \IN \to \IN$ via ${v(n) = \ell(e)[t(n)]}$ for all $n\in\IN$. Clearly, $t$ and $v$ are computable. 
		
	 We claim that $v$ is a modulus of convergence of the sequence $(x_{\varphi_{e}(t+1)} - x_{\varphi_{e}(t)})_t$. 
	 To see this, let $n\in \IN$ and $i \geq v(n)$. Let $t_2 > t_1 \geq t(n)$ be the two consecutive stages at which $\sigma$ is applied and expansionary with $\ell(e)[t_1] \leq i$ and $\ell(e)[t_2] \geq i+1$. Applying Lemma~\ref{satz:FBNAA:lem09} and the assumption that $\sigma$ is expansionary at $t_1$, we obtain	 
		\begin{align*}
			x_{\varphi_{e}(i+1)} - x_{\varphi_{e}(i)} &\leq
			x_{\varphi_{e}(\ell(e)[t_2])} - x_{\varphi_{e}(\ell(e)[t_1])} \\
			&= \underbrace{\left( x_{\varphi_{e}(\ell(e)[t_2])} - x_{t_2} \right)}_{\leq 0} + \left( x_{t_2} - x_{t_1} \right) + \left( x_{t_1} - x_{\varphi_{e}(\ell(e)[t_1])} \right) \\
			&\leq  \left(2^{-r(\sigma)[t_1]+1} + \sum_{\substack{\tau \prefixeq \sigma, \\ \left|\tau\right| \in S}} 2^{-w(\tau)[t_1]+1}\right) 
			+ 2^{-r(\sigma)[t_1]} \\
			&\leq  2^{-r(\sigma)[t(n)]+1} + 2^{-(n+1)} + 2^{-r(\sigma)[t(n)]} \\
			&< 2^{-r(\sigma)[t(n)]+2} + 2^{-(n+1)} \\
			&\leq 2^{-n}.
		\end{align*}
		Thus $(x_{\varphi_{e}(t+1)} - x_{\varphi_{e}(t)})_t$ converges computably to zero, and $\mathcal{P}_e$ is satisfied.	
	\end{proof}
	
	\begin{kor}
		The number $x$ is nearly computable.\qed
	\end{kor}
	
This completes 
	the proof of the existence of a left-computable, nearly computable number that is not regainingly approximable.
	\phantom\qedhere
\end{proof}
\section{Non-speedable non-randoms}

In this final section we prove our third main result by showing that there exists a left-computable number that is neither speedable nor Martin-Löf random; thereby giving a negative answer to the question of Merkle and Titov~\cite{MT2020} whether among the left-computable numbers the Martin-Löf randoms are characterized by being non-speedable. The understanding of the relationship between near computability and regaining approximability that we gained in the last section will be instrumental for this.
We begin by establishing a more convenient characterization of speedability.
\begin{lem}\label{lem:characterisierung-speedable}
	A left-computable number is speedable if and only if there exists a constant $\rho \in (0, 1) $ and a computable increasing sequence $(x_n)_n$ of rational numbers converging to $x$ such that there are infinitely many $n \in \IN$ with $\frac{x_{n+1} - x_n}{x - x_n} \geq \rho$.
\end{lem}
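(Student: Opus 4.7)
The proof is a straightforward algebraic manipulation, so the plan is quite short. The key observation is that for any $n$ with $x_n < x$ (which is automatic since $(x_n)_n$ is increasing and converges to $x$), we have the identity
\begin{equation*}
	\frac{x - x_{n+1}}{x - x_n} = \frac{(x - x_n) - (x_{n+1} - x_n)}{x - x_n} = 1 - \frac{x_{n+1} - x_n}{x - x_n}.
\end{equation*}
Thus the inequality $\frac{x - x_{n+1}}{x - x_n} \leq \rho$ is equivalent to $\frac{x_{n+1} - x_n}{x - x_n} \geq 1 - \rho$.

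The plan is then to observe that the map $\rho \mapsto 1 - \rho$ is a bijection from $(0,1)$ to $(0,1)$. For the forward direction, assume $x$ is speedable and let $\rho \in (0,1)$ and $(x_n)_n$ witness this. Set $\rho' := 1 - \rho \in (0,1)$ and use the same sequence; by the identity above, every $n$ satisfying $\frac{x - x_{n+1}}{x - x_n} \leq \rho$ also satisfies $\frac{x_{n+1} - x_n}{x - x_n} \geq \rho'$, and since infinitely many $n$ have the former property, the same infinitely many $n$ have the latter. The converse direction is symmetric: given $\rho' \in (0,1)$ and $(x_n)_n$ witnessing the alternative condition, set $\rho := 1 - \rho' \in (0,1)$ and reuse the sequence. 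No obstacle here beyond making sure the computability and monotonicity of the sequence are preserved, which is trivial since we are not modifying the sequence at all.
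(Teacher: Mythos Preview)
Your proof is correct and follows essentially the same approach as the paper: both rely on the identity $\frac{x - x_{n+1}}{x - x_n} = 1 - \frac{x_{n+1} - x_n}{x - x_n}$ and the substitution $\rho \leftrightarrow 1-\rho$. The paper compresses both directions into a single equivalence, while you spell out the two directions separately, but the argument is the same.
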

\begin{proof}
	Suppose that $x$ is speedable, that is, by definition, there exists a constant $\rho' \in (0, 1)$ and a computable increasing sequence $(x_n)_n$ converging to $x$ with $\frac{x - x_{n+1}}{x - x_n} \leq \rho'$ for infinitely many $n \in \IN$. 
	If we let $\rho := 1 - \rho'$, this is equivalent to $\frac{x_{n+1} - x_n}{x - x_n} = 1 - \frac{x - x_{n+1}}{x - x_n} \geq \rho$ for infinitely many $n \in \IN$.
\end{proof}
Thus, a left-computable number $x$ is non-speedable if and only if for every computable increasing sequence of rational numbers $(x_n)_n$ converging to~$x$ the sequence $\left(\frac{x_{n+1} - x_n}{x - x_n}\right)_n$ converges to zero. Before we use this characterization we first examine the relationship between speedable and regainingly approximable numbers.
\begin{prop}\label{prop:aufholend-approximierbar-impliziert-beschleunigbar}
	Every regainingly approximable number is speedable.
\end{prop}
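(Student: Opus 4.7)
The plan is to show that any computable non-decreasing witness $(x_n)_n$ for the regaining approximability of $x$, once made strictly increasing, directly witnesses speedability of $x$ through Lemma~\ref{lem:characterisierung-speedable}. First I would dispose of the case that $x$ is computable, in which case $x$ admits a trivially speedable approximation with geometric convergence obtained from any fast computable approximation of $x$; so I assume $x$ is non-computable, hence irrational (every left-computable rational is computable). Then the given $(x_n)_n$ cannot be eventually constant, and I can compute a strictly increasing subsequence $(y_k)_k := (x_{n_k})_k$ by iteratively $\mu$-searching for the next strict increase. A brief verification shows that $(y_k)_k$ still converges to $x$ and inherits the regaining property in the form $x - y_k < 2^{-k}$ for infinitely many $k$: if $x - x_n < 2^{-n}$ and $k$ is the largest index with $n_k \leq n$, then $y_k = x_n$ by construction (the sequence is constant on $[n_k, n_{k+1})$) and $k \leq n_k \leq n$.

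By Lemma~\ref{lem:characterisierung-speedable} it then suffices to prove that $(y_k)_k$ itself witnesses speedability, that is, $\liminf_k (x - y_{k+1})/(x - y_k) < 1$. The key tool is a logarithmic estimate. Set $d_k := -\log_2(x - y_k)$, which is well-defined since $y_k < x$, strictly increasing, and tends to $\infty$. The regaining property translates exactly to $d_k > k$ for infinitely many $k$. Assume for contradiction that $(x - y_{k+1})/(x - y_k) \to 1$, which is equivalent to $d_{k+1} - d_k \to 0$. Then for every $\epsilon > 0$ there is a threshold $K$ with $d_{k+1} - d_k < \epsilon$ for all $k \geq K$, and a telescoping sum yields $d_n \leq d_K + \epsilon (n - K)$ for all $n > K$, whence $\limsup_n d_n / n \leq \epsilon$. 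Since $\epsilon$ is arbitrary, $\limsup_n d_n / n \leq 0$, contradicting $d_k / k > 1$ infinitely often.

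Conceptually, the regaining condition guarantees infinitely many moments of exponentially-fast cumulative progress, whereas the absence of speedability would force sub-exponential per-step progress; these two conditions are asymptotically incompatible. The main --- really only --- delicate point is maintaining strict monotonicity of the witness so that the logarithms remain finite; once the initial case split and subsequence extraction handle this, the remainder is essentially a one-line telescoping computation.
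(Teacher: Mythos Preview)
Your proof is correct but takes a genuinely different route from the paper's. The paper's argument is essentially a one-line computation: given the non-decreasing witness $(x_n)_n$, it sets $y_n := x_n - 2^{-n}$, which is automatically strictly increasing and still converges to $x$, and then directly verifies that at each good index $n$ (where $x - x_n < 2^{-n}$) one has
\[
\frac{y_{n+1} - y_n}{x - y_n} = \frac{(x_{n+1} - x_n) + 2^{-(n+1)}}{(x - x_n) + 2^{-n}} > \frac{2^{-(n+1)}}{2\cdot 2^{-n}} = \frac{1}{4},
\]
so $\rho = \tfrac14$ works uniformly. There is no case split on computability, no subsequence extraction, and no asymptotic argument. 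Your approach instead first reduces to a strictly increasing subsequence (paying the price of a separate computable case and a verification that the regaining property survives) and then runs a logarithmic telescoping argument showing that $(x-y_{k+1})/(x-y_k)\to 1$ would force $\limsup_k d_k/k\le 0$, contradicting $d_k>k$ infinitely often. Your argument is more conceptual---it makes the asymptotic incompatibility between ``sub-exponential per-step progress'' and ``exponentially-fast cumulative progress infinitely often'' explicit---but the paper's trick of subtracting $2^{-n}$ bypasses all the preliminary work and even yields an explicit speedability constant.
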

\begin{proof}
	Let $x$ be regainingly approximable. Then, by definition, there exists a computable non-decreasing sequence of rational numbers $(x_n)_n$ converging to $x$ with $x - x_n < 2^{-n}$ for infinitely many $n \in \IN$. Define the sequence $(y_n)_n$ by $y_n := x_n - 2^{-n}$ for all $n \in \IN$. This sequence is computable, increasing and also converges to $x$. Then, for every $n \in \IN$ with $x - x_n < 2^{-n}$, we have
	\begin{equation*}
		\frac{y_{n+1} - y_n}{x - y_n} = \frac{\left(x_{n+1} - x_n\right) + 2^{-(n+1)}}{\left(x - x_n\right) + 2^{-n}} > \frac{2^{-(n+1)}}{2^{-n} + 2^{-n}} = \frac{1}{4}.\qedhere
	\end{equation*}
\end{proof}
The converse is not true as the two following known results imply.
\begin{prop}[Merkle, Titov \cite{MT2020}]
	Every strongly left-computable number is speedable.
\end{prop}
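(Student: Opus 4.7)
The plan is to verify the characterization of speedability provided by Lemma~\ref{lem:characterisierung-speedable} with the choice $\rho := 1/2$. That is, given a c.e. set $A \subseteq \IN$ with $x_A = x$, I will exhibit a computable increasing sequence $(y_n)_n$ of rationals converging to $x$ such that $\frac{y_{n+1}-y_n}{x-y_n} \geq 1/2$ for infinitely many $n \in \IN$. If $A$ is finite then $x$ is a dyadic rational, and the straightforward approximation $y_n := x - 2^{-(n+N)}$, for any constant $N \in \IN$, is computable and increasing with ratio exactly~$1/2$ at every~$n$; this case is thus immediate. Assuming $A$ is infinite, I would fix a computable one-to-one enumeration $(a_n)_n$ of~$A$ and define $y_n := \sum_{i < n} 2^{-(a_i+1)}$, which is a computable increasing sequence of rationals with limit~$x$.

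The heart of the argument is then to identify infinitely many stages~$n$ at which $a_n$ is a \emph{tail minimum} in the sense that $a_n < a_i$ for all $i > n$. Since $(a_n)_n$ is injective with unbounded range, the sequence $m_n := \min\{a_i : i \geq n\}$ is non-decreasing, tends to infinity, and strictly increases precisely at those $n$ for which $a_n = m_n$; consequently, the set of such $n$ is infinite. At any such $n$ we have $y_{n+1} - y_n = 2^{-(a_n+1)}$, and because every $a_i$ with $i > n$ is strictly larger than $a_n$, the map $i \mapsto a_i$ injects the tail $\{i > n\}$ into $\{a_n+1, a_n+2, \dots\}$. A geometric-series comparison then yields $x - y_{n+1} = \sum_{i > n} 2^{-(a_i+1)} \leq 2^{-(a_n+1)}$, so $x - y_n \leq 2^{-a_n}$, and therefore $\frac{y_{n+1}-y_n}{x-y_n} \geq 1/2$ as desired.

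I do not anticipate a real obstacle here: the finite case is a trivial separate step, and in the infinite case the entire argument reduces to the elementary tail-minimum bookkeeping together with the standard dyadic geometric-series estimate. No priority method, no approximation with side parameters — just an injective enumeration of $A$ and the observation that a tail minimum gives immediate control on how much mass can still come in the future.
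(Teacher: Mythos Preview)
Your argument is correct. The paper does not supply its own proof of this proposition; it simply cites Merkle and Titov~\cite{MT2020}. Your tail-minimum argument is the natural one: at each stage where the freshly enumerated element is smaller than everything still to come, the remaining distance to $x$ is bounded by twice the current jump, yielding ratio at least $\nicefrac{1}{2}$ in the sense of Lemma~\ref{lem:characterisierung-speedable}. The separate handling of the finite case is fine (and non-uniform, which is all that is required here).
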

\begin{theorem}[Hertling, Hölzl, Janicki \cite{HHJ2023}]\label{satz:SLBNAA}
	There exists a strongly left-computable number that is not regainingly approximable.
\end{theorem}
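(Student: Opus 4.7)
The plan is to construct a c.e.\ set $A \subseteq \IN$ such that the strongly left-computable number $x_A$ is not regainingly approximable. By Lemma~\ref{prop:charakterisierung-aa}, it suffices to design a computable non-decreasing approximation $(y_t)_t$ converging to $x_A$ such that for every computable increasing function $\varphi_e$ we have $x_A - y_{\varphi_e(n)} < 2^{-n}$ for only finitely many $n$. The natural choice is $(y_t)_t := (x_{A_t})_t$, where $(A_t)_t$ is the computable enumeration of $A$ induced by the construction. The requirement to satisfy for each $e \in \IN$ is then
\begin{equation*}
	\mathcal{R}_e\colon \varphi_e \text{ total and increasing} \;\Rightarrow\; (\exists m)(\forall n \geq m)\; x_A - x_{A_{\varphi_e(n)}} \geq 2^{-n}.
\end{equation*}

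The key idea is to reserve in advance, for each pair $(e, n)$ with $n \geq e + 3$, a pairwise disjoint infinite computable subset $B_{e,n} \subseteq \IN$ with $x_{B_{e,n}} \geq 2^{-n}$, and then to enumerate $B_{e,n}$ into $A$ only after $\varphi_e(n)$ has been observed to converge. Setting up the $B_{e,n}$ is a routine combinatorial exercise: the total required weight $\sum_e \sum_{n \geq e + 3} 2^{-n} = 1/2$ lies strictly below the total weight $x_{\IN} = 1$, leaving comfortable slack, so one can, for example, partition $\IN$ computably into infinite columns $C_e$ via a pairing function and then subdivide each $C_e$ into further infinite sub-blocks $B_{e,n}$ of appropriate weight.

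With the reserves fixed, the enumeration of $A$ is straightforward. At each stage $t + 1$, for every pair $(e, n)$ with $e, n \leq t$, $n \geq e + 3$, and $\varphi_e(n)[t]{\downarrow}$, enumerate into $A$ the smallest element of $B_{e,n}$ not already in $A_t$. Clearly $A$ is c.e., so $x_A$ is strongly left-computable, and $(x_{A_t})_t$ is a computable, non-decreasing sequence converging to $x_A$. Because the $B_{e,n}$ are pairwise disjoint, no requirement ever obstructs another; no priority tree and no injury handling is needed, a pleasant simplification relative to the constructions in the previous two sections.

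For the verification of $\mathcal{R}_e$, fix $e$ with $\varphi_e$ total and increasing and any $n \geq e + 3$. Let $t^*$ be the first stage at which $\varphi_e(n)[t^*]{\downarrow}$; by the convention on page~\pageref{enumeration_convention} this forces $\varphi_e(n) \leq t^*$, and by construction no element of $B_{e,n}$ enters $A$ before stage $t^* + 1$. Since $\varphi_e$ is total, every element of $B_{e,n}$ eventually enters $A$, so
\begin{equation*}
	x_A - x_{A_{\varphi_e(n)}} \geq x_{B_{e,n}} \geq 2^{-n}.
\end{equation*}
Hence $\mathcal{R}_e$ holds with $m := e + 3$, and Lemma~\ref{prop:charakterisierung-aa} gives that $x_A$ is not regainingly approximable. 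The only mildly delicate step is the explicit bookkeeping of the reserves $B_{e,n}$, but as noted their existence is elementary thanks to the weight budget being strictly less than $1$.
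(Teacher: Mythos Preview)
The paper does not give its own proof of this theorem; it is quoted as a result of Hertling, H\"olzl, and Janicki~\cite{HHJ2023} and used as a black box (together with the result of Merkle and Titov that every strongly left-computable number is speedable) to obtain Corollary~\ref{dfsdjhhdbfajfkjgsdjfhdasnfbsafevcvssda}. So there is nothing in the present paper to compare your argument against.

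That said, your proof is correct. The reduction via Lemma~\ref{prop:charakterisierung-aa} to the single approximation $(x_{A_t})_t$ is the right move, and the reservation scheme with pairwise disjoint blocks $B_{e,n}$ of weight at least $2^{-n}$, activated only once $\varphi_e(n)$ has been seen to converge, cleanly satisfies every $\mathcal{R}_e$ without any injury. The weight accounting $\sum_e \sum_{n \geq e+3} 2^{-n} = \sum_e 2^{-(e+2)} = \tfrac{1}{2} < 1$ is correct and does make the existence of the $B_{e,n}$ routine; in fact finite blocks assigned greedily already suffice, so the requirement that each $B_{e,n}$ be infinite is unnecessary and could be dropped to simplify the bookkeeping (just skip the enumeration step once $B_{e,n} \subseteq A_t$). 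Your use of the convention $\varphi_e(n)[t]{\downarrow} \Rightarrow \varphi_e(n) \leq t$ to guarantee $B_{e,n} \cap A_{\varphi_e(n)} = \emptyset$ is exactly what is needed, and the conclusion $x_A - x_{A_{\varphi_e(n)}} \geq x_{B_{e,n}} \geq 2^{-n}$ for all $n \geq e+3$ follows as stated.
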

\begin{kor}\label{dfsdjhhdbfajfkjgsdjfhdasnfbsafevcvssda}
	There exists a speedable number that is not regainingly approximable.\qed 
\end{kor}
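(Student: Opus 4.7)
The plan is to simply combine the two immediately preceding results, since the corollary is stated with \qed and is intended as a direct consequence. First I would invoke Theorem~\ref{satz:SLBNAA} to obtain a strongly left-computable number $x$ that is not regainingly approximable. Then I would apply the proposition of Merkle and Titov stating that every strongly left-computable number is speedable, concluding that this very $x$ is speedable. Hence $x$ witnesses the existence claim.

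There is essentially no obstacle here because all the substantive work has been done earlier: the construction behind Theorem~\ref{satz:SLBNAA} is what produces the desired separation, and the Merkle--Titov proposition is a general fact about the class of strongly left-computable numbers. The only thing worth emphasizing in the write-up is that the notion of speedability is weak enough to be automatically satisfied by all strongly left-computable numbers, while regaining approximability is genuinely restrictive, so the gap between the two classes is already populated by the witness from Theorem~\ref{satz:SLBNAA}. Thus the proof is just a one-line combination and need not do more than cite the two results in the correct order.
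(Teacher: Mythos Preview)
Your proposal is correct and matches the paper's approach exactly: the corollary is marked with \qed precisely because it follows immediately from combining Theorem~\ref{satz:SLBNAA} with the Merkle--Titov proposition, just as you describe.
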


Thus, in general, the regainingly approximable numbers are a proper subset of the speedable numbers. However, as we will show now, the two notions become equivalent once we restrict ourselves to nearly computable numbers. To demonstrate this, we employ the following two convenient characterizations of the left-computable nearly computable and of the left-computable regainingly approximable numbers.
\begin{prop}[Hertling, Janicki~\cite{HJ2023}]\label{prop:characterisierung-lnc}
	The following statements are equivalent for a left-computable number~$x$:
	\begin{enumerate}
		\item $x$ is nearly computable.
		\item For every computable non-decreasing sequence of rational numbers $(x_n)_n$ converging to $x$, the sequence $(x_{n+1} - x_n)_n$ converges computably to zero.
	\end{enumerate}
\end{prop}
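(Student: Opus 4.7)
For the direction $(2) \Rightarrow (1)$ the argument is direct. Since $x$ is left-computable, fix a computable non-decreasing sequence of rationals $(x_n)_n$ converging to $x$. By hypothesis $(x_{n+1} - x_n)_n$ converges computably to $0$. For any computable increasing function $s \colon \IN \to \IN$, the subsequence $(x_{s(n)})_n$ is again a computable non-decreasing sequence of rationals converging to $x$; applying (2) to this subsequence yields that $(x_{s(n+1)} - x_{s(n)})_n$ converges computably to $0$. This is precisely the definition of nearly computable convergence for $(x_n)_n$, so $x$ is nearly computable.

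For the converse $(1) \Rightarrow (2)$ I would fix a computable sequence $(y_n)_n$ witnessing the near computability of $x$ together with an arbitrary computable non-decreasing sequence $(x_n)_n \to x$, and try to transfer the nearly computable convergence of $(y_n)_n$ into control on $(x_{n+1} - x_n)_n$. A natural strategy is to combine both sequences into a single computable non-decreasing sequence. First I would cap $(y_n)_n$ against a computable non-decreasing left-approximation $(a_n)_n$ of $x$ via $\tilde y_n := \min(y_n, a_n)$, so that $\tilde y_n \leq x$ while still $\tilde y_n \to x$; then pass to the running maximum $\hat y_n := \max_{k \leq n} \tilde y_k$, obtaining a computable non-decreasing sequence converging to $x$. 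I would then merge-sort $(x_n)_n$ and $(\hat y_n)_n$ into a single computable non-decreasing sequence $(z_n)_n$ converging to $x$, in which the values $x_n$ appear as a computable subsequence via some computable increasing selector~$s$. Each gap $x_{n+1} - x_n$ is then a sum of consecutive gaps of $(z_n)_n$ over the window $[s(n), s(n+1))$, so a computable modulus for $(z_{n+1} - z_n)_n \to 0$ transfers to one for $(x_{n+1} - x_n)_n \to 0$.

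The main technical hurdle is showing that the auxiliary sequence $(\hat y_n)_n$, and subsequently the merged sequence $(z_n)_n$, inherit the nearly computable convergence of $(y_n)_n$. The capping step is relatively benign, since consecutive differences of $\tilde y_n$ can be dominated by corresponding differences of $y_n$ and $a_n$. The running maximum, however, requires genuine care: over a window $[s(n), s(n+1))$ the value $\hat y_k$ may remain constant for many indices and then jump once, and this jump must be traced back to a bounded difference in $(\tilde y_n)_n$ taken over a potentially much longer window. The crux is therefore a preservation lemma that, given any computable increasing selector for $(\hat y_n)_n$, manufactures an appropriate computable increasing selector for $(y_n)_n$ whose consecutive-difference modulus can be pulled back. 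Once this preservation property is established the remainder of the argument is routine bookkeeping with moduli of convergence.
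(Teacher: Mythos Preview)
The paper does not contain a proof of this proposition; it is quoted from Hertling and Janicki~\cite{HJ2023} and used as a black box, so there is no in-paper argument to compare against.

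On your argument itself: the direction $(2)\Rightarrow(1)$ is correct and clean.

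For $(1)\Rightarrow(2)$ your outline has a genuine gap. The claim ``a computable modulus for $(z_{n+1}-z_n)_n\to 0$ transfers to one for $(x_{n+1}-x_n)_n\to 0$'' is false as stated: the window $[s(n),s(n+1))$ can have unbounded length, so bounding each individual $z$-gap does not bound the telescoping sum $x_{n+1}-x_n=\sum_{k=s(n)}^{s(n+1)-1}(z_{k+1}-z_k)$. You implicitly notice this in the next paragraph by upgrading the goal to nearly computable convergence of $(z_n)_n$, but that target is essentially circular: proving it includes, as the special case of the selector~$s$, exactly the conclusion you are after. The viable route is not to prove near computability of the merged sequence wholesale, but to use the merge structure to sandwich each $x_{n+1}-x_n$ between $\hat y$-terms, obtaining $x_{n+1}-x_n\le \hat y_{q(n+1)}-\hat y_{q(n)}$ for a computable selector $q$ derived from the merge positions, and then to massage $q$ into a strictly increasing function so that near computability of $(\hat y_n)_n$ applies directly. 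The preservation of near computability through capping and the running-maximum step, which you flag as the crux, is indeed the substantive part, and your sketch does not resolve it; that is where the real work in the Hertling--Janicki argument lies.
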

\begin{prop}[Hertling, Hölzl, Janicki \cite{HHJ2023}]\label{prop:characterization-reg-app}
		The following statements are equivalent for a left-computable number~$x$:
		\begin{enumerate}
			\item $x$ is regainingly approximable.
			\item There exists a computable non-decreasing sequence of rational numbers $(x_n)_n$ converging to $x$ and a computable non-decreasing and unbounded function $f \colon \IN \to \IN$ with
			$x - x_n \leq 2^{-f(n)}$
			for infinitely many $n \in \IN$.
		\end{enumerate}
	\end{prop}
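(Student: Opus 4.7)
The plan is to handle the two implications separately. For (1)~$\Rightarrow$~(2), if $x$ is regainingly approximable as witnessed by a sequence $(x_n)_n$ with $x - x_n < 2^{-n}$ for infinitely many $n$, then setting $f(n) := n$ gives a computable, non-decreasing, and unbounded function with $x - x_n \leq 2^{-f(n)}$ for those same infinitely many $n$. This direction is essentially by inspection.

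For the harder direction (2)~$\Rightarrow$~(1), let $(x_n)_n$ and $f$ be as in the hypothesis. My idea is to ``stretch'' the sequence by reindexing so that by the time we reach position $n$ in the new sequence we have already moved past every $k$ with $f(k) \leq n$. Concretely, I would define $h(n) := \max\{k \in \IN \colon f(k) \leq n\}$, which is well-defined for all $n \geq f(0)$ because $f$ is non-decreasing and unbounded, and computable by searching for the least $k$ with $f(k) > n$ and subtracting one. Setting $y_n := x_{h(n)}$, with a trivial default value for the initial $n$ where $h$ is not yet defined, produces a computable non-decreasing sequence of rationals; since $h(n) \to \infty$, the sequence $(y_n)_n$ converges to $x$.

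The key observation is that for every good index $k^\ast$, meaning $x - x_{k^\ast} \leq 2^{-f(k^\ast)}$, the value $n^\ast := f(k^\ast)$ satisfies $h(n^\ast) \geq k^\ast$ by the definition of $h$, whence $y_{n^\ast} \geq x_{k^\ast}$ and therefore $x - y_{n^\ast} \leq 2^{-f(k^\ast)} = 2^{-n^\ast}$. Since there are infinitely many good indices $k^\ast$ and $f$~is unbounded and non-decreasing, the values $n^\ast = f(k^\ast)$ tend to infinity, so there are infinitely many $n$ with $x - y_n \leq 2^{-n}$. To obtain the strict inequality demanded by the definition of regaining approximability, I would simply pass to the shifted sequence $z_n := y_{n+1}$: for each good $k^\ast$ with $f(k^\ast) \geq 1$, setting $m := f(k^\ast) - 1$ gives $x - z_m = x - y_{f(k^\ast)} \leq 2^{-f(k^\ast)} = 2^{-(m+1)} < 2^{-m}$. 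I anticipate no substantive obstacles; the construction is elementary, and the only mild subtlety is the one-step shift used to convert $\leq$ into strict $<$.
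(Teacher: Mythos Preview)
Your argument is correct. Note, however, that the present paper does not supply its own proof of this proposition: it is quoted from Hertling, H\"olzl, and Janicki~\cite{HHJ2023} and used as a black box in the proof of Theorem~\ref{dsfjknsdkjldfgljsdfdfgdfterter}. So there is no in-paper proof to compare against.

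That said, your proof stands on its own. The forward direction is immediate with $f(n)=n$. For the converse, your reindexing via $h(n) := \max\{k \colon f(k) \leq n\}$ is exactly the right move: it is computable because $f$ is computable, non-decreasing, and unbounded; the resulting $(y_n)_n$ is computable and non-decreasing and converges to~$x$; and your verification that each good index $k^\ast$ produces an $n^\ast = f(k^\ast)$ with $x - y_{n^\ast} \leq 2^{-n^\ast}$ is clean. The observation that the $n^\ast$ tend to infinity (since $f$ is unbounded and non-decreasing and the good $k^\ast$ are cofinal) ensures infinitely many such~$n^\ast$, and the one-step shift $z_n := y_{n+1}$ correctly converts the weak inequality to the strict one required by the definition. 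No gaps.
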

\begin{theorem}\label{dsfjknsdkjldfgljsdfdfgdfterter}
	Let $x$ be a speedable number which is nearly computable. Then $x$ is  regainingly approximable.
\end{theorem}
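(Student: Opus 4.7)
The plan is to combine the three convenient characterizations developed earlier in this section. By Lemma~\ref{lem:characterisierung-speedable}, speedability of $x$ provides a computable increasing sequence $(x_n)_n$ of rationals converging to $x$ together with a constant $\rho \in (0,1)$ such that $\frac{x_{n+1}-x_n}{x-x_n}\geq \rho$ for infinitely many $n\in\IN$. Since $x$ is left-computable and nearly computable, Proposition~\ref{prop:characterisierung-lnc} applied to this very sequence yields a computable modulus of convergence $g\colon \IN\to\IN$ for $(x_{n+1}-x_n)_n$; that is, $x_{n+1}-x_n < 2^{-k}$ for all $n\geq g(k)$, and we may assume $g$ is strictly increasing.

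Next, define $f(n):=\max\{k\in\IN \colon g(k)\leq n\}$ when $n\geq g(0)$ and $f(n):=0$ otherwise. Then $f$ is computable, non-decreasing, and unbounded, and by construction $x_{n+1}-x_n\leq 2^{-f(n)}$ for every $n\geq g(0)$. At each of the infinitely many $n$ where the speedability estimate $x_{n+1}-x_n\geq \rho(x-x_n)$ holds, we therefore obtain
\[
x-x_n\;\leq\;\frac{x_{n+1}-x_n}{\rho}\;\leq\;\rho^{-1}\cdot 2^{-f(n)}.
\]
Choosing $c:=\lceil \log_2(1/\rho)\rceil$ and setting $h(n):=\max\{0,\,f(n)-c\}$ yields a computable non-decreasing and unbounded function $h\colon \IN\to\IN$ with $x-x_n\leq 2^{-h(n)}$ for infinitely many $n$. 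An appeal to Proposition~\ref{prop:characterization-reg-app} then immediately yields that $x$ is regainingly approximable.

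The argument is essentially bookkeeping once the three characterizations are in place; the conceptually noteworthy point is that the \emph{computable} modulus of convergence supplied by near computability is exactly what is needed to turn the \emph{relative} speedability estimate $\frac{x_{n+1}-x_n}{x-x_n}\geq \rho$ into an \emph{absolute} estimate of the form $x-x_n\leq 2^{-h(n)}$ required by Proposition~\ref{prop:characterization-reg-app}. Since $\rho$ is independent of $n$, only a constant additive shift in the exponent is needed, which does not disturb monotonicity or unboundedness of $f$; I expect no real obstacle, the main thing to watch being that the \emph{same} sequence $(x_n)_n$ is used both to witness speedability (in its Lemma~\ref{lem:characterisierung-speedable} form) and to extract the modulus from near computability.
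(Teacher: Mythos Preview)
Your proof is correct and follows essentially the same approach as the paper's: both invoke Lemma~\ref{lem:characterisierung-speedable} to get $(x_n)_n$ and $\rho$, apply Proposition~\ref{prop:characterisierung-lnc} to that same sequence to obtain a computable modulus for $(x_{n+1}-x_n)_n$, invert the modulus to a non-decreasing unbounded rate function, shift by $\lceil\log_2(1/\rho)\rceil$, and conclude via Proposition~\ref{prop:characterization-reg-app}. The only cosmetic difference is that the paper names the modulus $f$ and its inverse $g$, whereas you do the opposite.
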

\begin{proof}
	Choose a constant $\rho \in (0,1)$ and a computable increasing sequence of rational numbers $(x_n)_n$ converging to $x$ that witness the speedability of $x$ in the sense of Lemma~\ref{lem:characterisierung-speedable}.
	
	Since $x$ is nearly computable, the sequence $(x_{n+1} - x_n)_n$ converges computably to zero, as witnessed by some modulus of convergence $f \colon \IN \to \IN$ that is computable, non-decreasing and unbounded.
	Define $g \colon \IN \to \IN$ via
	\begin{equation*}
		g(n) := \begin{cases}
			0 &\text{if $f(0) > n$}, \\
			\max\{k \in \IN \colon f(k) \leq n\} &\text{otherwise},
		\end{cases} 
	\end{equation*}
	for all $n \in \IN$. Clearly, $g$ is computable, non-decreasing and unbounded, and we have $x_{n+1} - x_n < 2^{-g(n)}$ for all $n \geq f(0)$.
	Fix some $k \in \IN$ with $\nicefrac{1}{\rho} \leq 2^k$ and define $h \colon \IN \to \IN$ via
	\begin{equation*}
		h(n) := \max\{0, g(n) - k\}
	\end{equation*}
	 for all $n \in \IN$. Again, $h$ is computable, non-decreasing and unbounded.
	  
	Let $m := \min\{i \geq f(0) \colon g(i) \geq k \}$ and consider any of the infinitely many $n \geq m$ for which $\frac{x_{n+1} - x_n}{x - x_n} \geq \rho$ holds by choice of $\rho$ and of $(x_n)_n$. For all of these $n$ we have
	\begin{equation*}
		x - x_n \leq \nicefrac{1}{\rho} \cdot \left(x_{n+1} - x_n\right) < 2^k \cdot 2^{-g(n)} = 2^{-h(n)}.
	\end{equation*}
	Hence, using Proposition~\ref{prop:characterization-reg-app}, 	
	$x$ is regainingly approximable.
\end{proof}

Putting together Proposition~\ref{prop:aufholend-approximierbar-impliziert-beschleunigbar} and Theorem~\ref{dsfjknsdkjldfgljsdfdfgdfterter}, we obtain the following corollary.

\goodbreak

\begin{kor}
		Let $x$ be a left-computable number that is nearly computable. Then the following statements are equivalent:
		\begin{enumerate}
			\item $x$ is regainingly approximable.
			\item $x$ is speedable.\qed
		\end{enumerate}
	\end{kor}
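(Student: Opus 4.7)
The plan is to observe that this corollary is essentially a bookkeeping step combining the two directional results that were just established, so the proof will be a clean assembly of already-proved facts rather than a new argument. Let $x$ be a left-computable nearly computable number; I want to show $(1) \Leftrightarrow (2)$.

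For the direction $(1) \Rightarrow (2)$, I would simply invoke Proposition~\ref{prop:aufholend-approximierbar-impliziert-beschleunigbar}, which states that every regainingly approximable number is speedable. Note that this direction does not actually use the hypothesis that $x$ is nearly computable, so nothing further is required here.

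For the direction $(2) \Rightarrow (1)$, I would apply Theorem~\ref{dsfjknsdkjldfgljsdfdfgdfterter} directly: by hypothesis $x$ is speedable and nearly computable, and the theorem concludes precisely that such an $x$ must be regainingly approximable.

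Since both directions are immediate citations of the preceding results, there is no real obstacle: the only ``work'' is to recognize that the hypothesis of near computability is used solely in the $(2) \Rightarrow (1)$ direction, where it is essential (by Corollary~\ref{dfsdjhhdbfajfkjgsdjfhdasnfbsafevcvssda}, speedability alone does not imply regaining approximability), while the converse holds unconditionally. The proof can therefore be written in a single short paragraph, and there is no need to reopen any of the technical machinery from the constructions.
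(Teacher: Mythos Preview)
Your proposal is correct and matches the paper's approach exactly: the corollary is obtained simply by combining Proposition~\ref{prop:aufholend-approximierbar-impliziert-beschleunigbar} for $(1)\Rightarrow(2)$ and Theorem~\ref{dsfjknsdkjldfgljsdfdfgdfterter} for $(2)\Rightarrow(1)$. Your additional remark that near computability is only needed in the $(2)\Rightarrow(1)$ direction is accurate and a nice clarification, though the paper itself does not spell this out.
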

	
	\goodbreak

Together with Theorem~\ref{satz:FBNAA} we obtain the next corollary.
\begin{kor}\label{kor:LCNCNS}
	There exists a left-computable number which is nearly computable but not speedable.\qed
\end{kor}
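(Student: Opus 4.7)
The plan is to combine Theorem~\ref{satz:FBNAA} with the preceding corollary. Namely, Theorem~\ref{satz:FBNAA} produces a left-computable number $x$ that is nearly computable but not regainingly approximable. The preceding corollary states that, within the class of left-computable nearly computable numbers, being regainingly approximable is equivalent to being speedable. Applying the contrapositive of the implication ``speedable $\Rightarrow$ regainingly approximable'' (which is the content of Theorem~\ref{dsfjknsdkjldfgljsdfdfgdfterter} restricted to nearly computable numbers), the fact that $x$ is nearly computable but fails to be regainingly approximable forces $x$ to be non-speedable as well.

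In other words, the proof is essentially a single application of the equivalence established in the preceding corollary to the witness constructed in the fourth section. No new construction is required; the entire difficulty of the result has already been absorbed into the infinite injury priority argument behind Theorem~\ref{satz:FBNAA} and the characterization arguments of the current section. Thus the only ``step'' is to invoke the two results in the right order, which makes the proof a one-line deduction terminated by $\square$.
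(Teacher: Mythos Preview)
Your proposal is correct and matches the paper's own argument exactly: the corollary is obtained by applying the preceding equivalence (regainingly approximable $\Leftrightarrow$ speedable within the left-computable nearly computable numbers) to the witness provided by Theorem~\ref{satz:FBNAA}. The paper likewise records this as an immediate one-line deduction.
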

Stephan and Wu~\cite{SW2005} showed that a left-computable number which is nearly computable cannot be Martin-Löf random. Together with Corollary~\ref{kor:LCNCNS} this implies our final main result, a negative answer to the question of Merkle and Titov~\cite{MT2020}.
\begin{kor}
	There exists a left-computable number which is not speedable and not Martin-Löf random.\qed
\end{kor}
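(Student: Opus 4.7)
The plan is to combine the previously established results in one short step. By Corollary~\ref{kor:LCNCNS}, there exists a left-computable number $x$ which is nearly computable but not speedable. First I would invoke the Stephan--Wu result~\cite{SW2005} cited immediately before the statement: every left-computable number that is nearly computable fails to be Martin-Löf random. Applying this to our $x$ yields that $x$ is not Martin-Löf random. Since $x$ is also not speedable by the choice from Corollary~\ref{kor:LCNCNS}, it meets both required negative conditions, and the corollary follows.

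The proof is essentially bookkeeping, and there is no real obstacle: the hard work was done in Theorem~\ref{satz:FBNAA} (to obtain a left-computable nearly computable number that is not regainingly approximable) and in Theorem~\ref{dsfjknsdkjldfgljsdfdfgdfterter} (to transfer non-regaining-approximability to non-speedability within the nearly computable numbers). The only thing to be careful about is to cite the correct references and to note explicitly that the witness we use is the same throughout: the number constructed in Theorem~\ref{satz:FBNAA}, whose properties guarantee both non-speedability (via the equivalence within nearly computable numbers) and non-randomness (via Stephan--Wu). Thus the final corollary is obtained as a direct consequence, without any further construction.
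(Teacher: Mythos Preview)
Your proposal is correct and follows exactly the paper's own argument: take the nearly computable, non-speedable left-computable number from Corollary~\ref{kor:LCNCNS} and apply the Stephan--Wu result to conclude it is not Martin-L\"of random. There is nothing to add.
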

It remains unclear whether a counterexample could also be found \textit{outside} the set of nearly computable numbers; accordingly, we conclude the article with the following open question.
\begin{opqn}
	Do the Martin-Löf random numbers, the nearly computable numbers, and the speedable numbers together form a covering of all left-computable numbers?
\end{opqn}

\section{Acknowledgments}

The authors would like to thank Ivan Titov for helpful discussions.

\bibliography{cca}
\bibliographystyle{abbrv}

\end{document}